\newcommand\N{{\mathbb{N}}}
\newcommand\R{{\mathbb{R}}}
\newcommand\C{{\mathbb{C}}}
\newcommand\E{{\mathbb{E}}}
\newcommand\Var{{\operatorname{Var}}}
\newcommand\Cov{{\operatorname{Cov}}}
\newcommand\ep{\varepsilon}
\newcommand\la{\lambda}
\newcommand\Bd{{\mathbf d}}
\newcommand\BB{{\mathbf B}}
\newcommand\BD{{\mathbf D}}
\newcommand\BH{{\mathbf H}}
\newcommand\BN{{\mathbf N}}
\newcommand\BP{{\mathbf P}}
\newcommand\CF{{\mathcal F}}
\newcommand\CH{{\mathcal H}}
\newcommand\CK{{\mathcal K}}
\newcommand\CL{{\mathcal L}}
\newcommand\CT{{\mathcal T}}
\newcommand\rfl {\rfloor}
\newcommand\lfl {\lfloor}
\newcommand\supp{\mathbf{supp}}
\newcommand\eps{\varepsilon}
\newcommand\bs{\backslash}
\newcommand\tensor{\otimes}
\theoremstyle{plain}
  \newtheorem{theorem}[subsection]{Theorem}
    \newtheorem{proposition}[subsection]{Proposition}
  \newtheorem{lemma}[subsection]{Lemma}
  \newtheorem{corollary}[subsection]{Corollary}
  \newtheorem{remark}[subsection]{Remark}
  \newtheorem{claim}[subsection]{Claim}
\theoremstyle{definition}
\begin{document}

\title[CLT for roots of  orthogonal polynomials]{Central Limit Theorem for the number of real roots of random orthogonal polynomials}

 \author{Yen Do, Hoi H. Nguyen, Oanh Nguyen,  and Igor E. Pritsker}

\address{Department of Mathematics\\ The University of Virginia\\ 141 Cabell Drive, Charlottesville, VA 22904, USA}
\email{yendo@virginia.edu}

\address{Department of Mathematics\\ The Ohio State University \\ 231 W 18th Ave \\ Columbus, OH 43210 USA}
\email{nguyen.1261@osu.edu}

\address{Division of Applied Mathematics\\ Brown University\\  Providence, RI 02906, USA}
\email{oanh\_nguyen1@brown.edu}

\address{Department of Mathematics, Oklahoma State University, Stillwater, OK 74078, USA}
\email{igor@math.okstate.edu}

 \maketitle

\begin{abstract} In this note we study the number of real roots of a wide class of random orthogonal polynomials with gaussian coefficients. Using the method of Wiener Chaos we show that the fluctuation in the bulk is asymptotically gaussian, even when the local correlations are different. 
\end{abstract}
\section{Introduction}

Let $\mu$ be a compactly supported Borel measure on the real line, and assume that the support of $\mu$ contains infinitely many points.  Consider the  polynomials
$$p_n(x) = \gamma_n x^n +\dots$$
with $\gamma_n>0$ for all $n\ge 0$,  such that
$$\int p_i(x) p_j(x) d\mu(x) =\delta_{ij}, \quad \text{for all }i,j \ge 0.$$
The study of the roots of these orthogonal polynomials has a long and rich history,  and we invite the  reader to classics such as \cite{Freud, ST} for  more details. In this note,  we are interested in several probabilistic aspects of the number of real roots for the following random polynomial, chosen from the span of $p_1(x),\dots, p_n(x)$:
$$H_n(x) = \sum_{j=0}^n \xi_j p_j(x).$$
We shall assume that $\xi_j$ are independent standard Gaussian random variables throughout the note.

Let $N_n([a,b])$ denote the number of real roots of $H_n$ in a given interval $[a,b]$.   Understanding the leading asymptotics for the expected value of $N_n$ was the subject of many prior investigations in this direction, and let us mention several results of this nature.

Around 1971,  Das \cite{Das} considered random Legendre polynomials (corresponding to the measure $d\mu (x) = dx$ on $[-1, 1]$) and found that the leading asymptotics of $\E N_n([-1, 1])$ is $n/\sqrt{3}$.  Wilkins \cite{Wilkins1, Wilkins2} later estimated the error term in this asymptotic relation. Farahmand \cite{Fa1,Fa2,Fa3} also considered the expected number of  level crossings for Legendre polynomials where the coefficients $\xi_j$'s may have dependent Gaussian distributions.

For random Jacobi polynomials (where $\mu(x) =(1-x)^a (1+x)^b \textbf{1}_{(-1,1)}$), Das and Bhatt \cite{DB} established that $\E N_n([-1, 1])$ has the same leading asymptotics $n/\sqrt{3}$.

These results were generalized by Lubinsky, Pritsker and Xie \cite{LPX1,LPX2} to much more general classes of random orthogonal polynomials.  In particular, they showed that the first term in the asymptotics for $\E N_n(\R)$ for many random polynomials remains the same.

All the results mentioned above were for Gaussian polynomials and were proved via the celebrated Kac-Rice formula.  We note in passing that the first moment statistics (and higher order moments) of the number of real roots of these polynomials are also universal in terms of randomness (when $\xi_j$ are not necessarily Gaussian), see  a recent joint work with V. Vu of the first author and the third author  \cite{DONgV}; however we will not focus on this aspect in this note.

It is well known that for $\mu(x)=(1-x)^{-1/2}(1+x)^{-1/2}1_{(-1,1)}$ we obtain the Chebyshev orthogonal polynomials (of the first type),  a special case of Jacobi random orthogonal polynomials.  The Chebyshev polynomials satisfy $p_n(\cos(x))=\cos(nx)$ and thus one may equivalently consider the random trigonometric polynomials
$$H_n(x)=\sum_{j=0}^n \xi_j \cos(j x), \quad x\in [0,2\pi].$$
This class of random functions was considered by Dunnage \cite{Dun} who showed that $\E N_n([0,2\pi])$ is asymptotically equal to $2n/\sqrt{3}$. In \cite{Qualls}, Qualls considered a slightly different class of trigonometric polynomials  (now known as the stationary trigonometric polynomials) \begin{equation}\label{eq:trig}
H_n(x)=\sum_{j=0}^n \xi_{j1} \cos(j x) + \xi_{j2} \sin(j x), \quad x\in [0,2\pi]
\end{equation}
and showed that $\E N_n([0,2\pi])$ is also asymptotically equal to $2n/\sqrt{3}$.

It is well-known in the subject that, for the number of real roots, asympotics for the variance  are much harder to handle than  the expected value:  for the variance,  one typically has to establish some further cancellation in the applications of the Kac-Rice formula. For the stationary random trigonometric model \eqref{eq:trig},  Bogomolny, Bohigas and Leboeuf \cite{BBL} argued that $\Var( N_n([0,2\pi]))$ is asymptotically $cn$, and this was verified by Granville and Wigman \cite{GW}, and subsequently by Aza\"{i}s and Le\'{o}n \cite{AL} via a different method, with an explicit formula for $c$.  The variance for the classical trigonometric models considered by Dunnage was computed in \cite{SS} by Su and Shao, and also by Aza\"{i}s, Dalmao and Le\'{o}n in \cite{ADL}. More recently,  Lubinsky and Pritsker \cite{LP} were able to provide a general method to compute the variances for many important cases of random orthogonal polynomials. Their results are summarized below.

\begin{theorem}[Mean and variance for roots of random orthogonal polynomials in the bulk]\cite{LPX1,LPX2,LP}\label{thm:var} Let $\mu$ be a measure with compact support on the real line, that is regular in the sense of Stahl, Totik, and Ullmann (that is $\lim_{n\to \infty} \gamma_n^{1/n} = \frac{1}{cap(\supp(\mu))}$, where $cap$ denotes the logarithmic capacity of $\supp(\mu)$). Let $\omega$ denote the Radon-Nikodym derivative of the equilibrium measure for the support of $\mu$. Let $[a', b']$ be a subinterval in the support of $\mu$, such that $\mu$ is absolutely continuous there, and its Radon-Nikodym derivative $\mu'$ is positive and continuous there. Assume moreover, that
$$\sup_{n\ge 1} \|p_n\|_{L_\infty[a',b']}<\infty.$$
Then if $[a,b] \subset (a',b')$, we have
$$\E N_n([a,b]) =  \left(\frac{\nu_\CK([a,b])}{\sqrt{3}}+o(1)\right)n,$$
where $\nu_\CK$ is the equilibrium measure of the support $\CK$ of $\mu$ in the sense of logarithmic potential theory.\

Furthermore,
$$\lim_{n\to \infty}\frac{1}{n} \Var(N_n([a,b])) = c \int_{a}^b \omega(y) dy$$
where $c$ is an explicit absolute positive constant (independent of $\CK, \mu$).
\end{theorem}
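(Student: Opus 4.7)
The plan is to proceed from the Kac-Rice formula. Since $H_n$ is a centered Gaussian process with covariance equal to the Christoffel-Darboux kernel $K_n(s,t)=\sum_{j=0}^n p_j(s)p_j(t)$, the expected density of real zeros at $x$ is
$$\rho_n(x)=\frac{1}{\pi}\sqrt{\partial_s\partial_t \log K_n(s,t)\big|_{s=t=x}},$$
and the two-point density $\rho_n(x,y)$ has a closed-form expression as a Gaussian expectation involving $K_n$ and its first two partial derivatives at $(x,x)$, $(y,y)$, and $(x,y)$. Correspondingly,
$$\E N_n([a,b])=\int_a^b \rho_n(x)\,dx, \qquad \Var(N_n([a,b]))=\int_a^b \rho_n(x)\,dx+\iint_{[a,b]^2}\bigl(\rho_n(x,y)-\rho_n(x)\rho_n(y)\bigr)\,dx\,dy.$$

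For the mean, the standing hypotheses -- $\mu$ regular in the Stahl-Totik-Ullmann sense, $\mu'$ positive and continuous on $[a',b']$, and $\sup_n\|p_n\|_{L_\infty[a',b']}<\infty$ -- are precisely what is needed to invoke Totik-Lubinsky bulk universality of the Christoffel-Darboux kernel: there is a scaling $\beta_n(x)\sim n\omega(x)/\mu'(x)$ for which
$$\frac{K_n(x+a/\beta_n(x),\,x+b/\beta_n(x))}{K_n(x,x)}\;\longrightarrow\;\frac{\sin\pi(a-b)}{\pi(a-b)}$$
locally uniformly in $(a,b)$ and $x$, with analogous limits for the first two partial derivatives. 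Substituting these into $\rho_n$ and using the direct computation $\sqrt{-r''(0)}/\pi=1/\sqrt{3}$ for the sine-kernel covariance $r(t)=\sin\pi t/(\pi t)$ yields $\rho_n(x)/n\to \omega(x)/\sqrt{3}$ pointwise. Lower and upper bounds on $K_n(x,x)/n$ coming from the Christoffel function asymptotics supply an integrable majorant, so dominated convergence and $\int_a^b \omega = \nu_\CK([a,b])$ give the claimed first-order asymptotic.

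For the variance the plan is to split the double integral into a near-diagonal strip $\{|x-y|\le M/n\}$ and its complement. On the strip, the substitution $(x,y)\mapsto (x,x+t/\beta_n(x))$ and the local universality above, including its derivative analogues, reduce $\rho_n(x,y)-\rho_n(x)\rho_n(y)$ to the two-point correlation of the stationary sine-kernel Gaussian process; its integral over $t$ contributes $c\,\omega(x)$ per unit $x$, where $c$ is the absolute constant already identified for stationary trigonometric polynomials in \cite{GW,AL,ADL}. Off the strip, I would control $|\rho_n(x,y)-\rho_n(x)\rho_n(y)|$ by a quadratic expression in the normalized kernel $\widehat K_n(x,y)=K_n(x,y)/\sqrt{K_n(x,x)K_n(y,y)}$ and its first derivatives. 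The Christoffel-Darboux identity
$$K_n(x,y)=\frac{\gamma_n}{\gamma_{n+1}}\cdot\frac{p_{n+1}(x)p_n(y)-p_n(x)p_{n+1}(y)}{x-y}$$
combined with the uniform $L_\infty$ hypothesis on $p_n,p_{n+1}$ on $[a',b']$ yields $|K_n(x,y)|\lesssim 1/|x-y|$ and hence $|\widehat K_n(x,y)|\lesssim 1/(n|x-y|)$ on $[a,b]^2$; integrating the resulting bound gives an off-diagonal contribution of order $(\log n)/n$ per unit area, in particular $o(n)$ after integration, as required.

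The principal obstacle is the near-diagonal step: extracting the correct limiting integrand for the variance needs locally uniform convergence of the rescaled kernel together with its first and second partials, uniformly in $x\in[a,b]$ and in the rescaled variables over arbitrarily large windows, so that $n\to\infty$ may be interchanged with $M\to\infty$. This is the deepest input and relies on the quantitative form of Totik-Lubinsky universality in the bulk. Once that is in place, the off-diagonal bound via Christoffel-Darboux, and the first-moment computation, are essentially routine.
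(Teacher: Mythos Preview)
The paper does not prove Theorem~\ref{thm:var}; it is quoted from \cite{LPX1,LPX2,LP} as background for the main CLT result. That said, the paper does reproduce the key analytic ingredients from \cite{LP} (Lemmas~\ref{lemma:3.2}, \ref{lemma:3.3}, \ref{lemma:2.2}, \ref{lemma:2.4}), and your outline matches the argument of \cite{LP} quite closely: Kac--Rice for the first two intensities, bulk universality of $K_n$ and its low-order derivatives to identify the sine-kernel limit on the near-diagonal, and Christoffel--Darboux plus the $L_\infty$ bound on $p_n$ for the off-diagonal decay. So the strategy is correct and essentially the same as in the cited works.

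Two small corrections. First, the rescaling in the universality statement is $\beta_n(x)=n\omega(x)$, not $n\omega(x)/\mu'(x)$; the factor $\mu'(x)$ appears only in the asymptotic $K_n(x,x)\mu'(x)/n\to \omega(x)$ for the Christoffel function (cf.\ Lemma~\ref{lemma:3.3}(b)), not in the spatial scaling of the kernel. Second, your off-diagonal sketch is right in spirit but understated: bounding $|\rho_2(x,y)-\rho_1(x)\rho_1(y)|$ by a quadratic expression in $\widehat K_n$ and its derivatives requires expanding the explicit Gaussian conditional formula for $\rho_2$ (as in Lemma~\ref{lemma:2.2}) and checking that the leading $n^2$ terms cancel; one also needs a uniform near-diagonal bound of the type in Lemma~\ref{lemma:2.4}(ii) to justify dominated convergence when interchanging $n\to\infty$ and $M\to\infty$. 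You flag this last point as the main obstacle, which is accurate.
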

In what follows, for convenience we write
$$c_{a,b}:=c \int_{a}^b \omega(y) dy.$$

We note that the constant $c$ is defined via the normalized sinc function sinc$(x)=\sin(\pi x)/(\pi x)$ in a rather complicated way, see \cite{LP} for details.  We also recall that $\nu_\CK$ minimizes the energy $I[\nu] = - \int \int  \log|z-t| d \nu(t) d\nu(z)$ among all probability measures $\nu$ with support on $\CK$, see \cite{Ra} and \cite{ST} for more details on properties of equilibrium measures and other facts from potential theory. It remains an interesting problem to extend the above result to the edges covering $[a',b']$.

After establishing the leading asymptotics for the variance,   one is naturally interested in another interesting and important direction, namely the  limiting distribution of the  fluctuation of $N_n[(a,b)]$ around  its mean (also known as the standardization of $N_n([a,b])$).  It was shown in \cite{GW} by  Granville and Wigman  that for the random trigonometric polynomials \eqref{eq:trig}, the limiting  distribution  for (the fluctuation of) $N_n([0,2\pi])$ is Gaussian, and this phenomenon is often refered to by various authors as asymptotic normality of $N_n([0,2\pi])$.  Granville--Wigman's result was later re-established in \cite{AL} by Aza\"{i}s and Le\'{o}n using a totally different and very powerful method.  Using this method,  Aza\"{i}s, Dalmao and Le\'{o}n  \cite{ADL}  also prove   that the   limiting  distribution   for $N_n([0,2\pi])$ for the random cosine polynomials is Gaussian, thus showed that after standardization the limiting distribution of the number of real roots of the random Chebyshev polynomials also obey the Gaussian law.

In this note, we show that the limiting   distribution for the number of real roots for the random polynomials considered in Theorem \ref{thm:var} also obeys the Gaussian law, hence extending the phenomenon established in \cite{ADL} (for Chebyshev polynomials) to far more general random polynomial ensembles.


\begin{theorem}[Central Limit Theorem   in the bulk,  our main result]\label{thm:CLT} With the same assumptions and notations as in Theorem \ref{thm:var}, we have
$$\frac{N_n([a,b])-\E N_n([a,b])}{\sqrt{c_{a,b} n}}  \xrightarrow{d} \BN (0,1).$$
\end{theorem}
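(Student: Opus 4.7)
The plan is to follow the Wiener chaos strategy developed by Aza\"{i}s--Le\'{o}n and deployed in \cite{ADL} for random Chebyshev polynomials. Starting from the regularized Kac-Rice representation
$$N_n([a,b]) = \lim_{\varepsilon\to 0}\frac{1}{2\varepsilon}\int_a^b \1_{\{|H_n(x)|<\varepsilon\}}\,|H_n'(x)|\,dx,$$
one normalizes the Gaussian vector $(H_n(x),H_n'(x))$ by its covariance and expands both $\1_{\{|\cdot|<\varepsilon\}}$ and $|\cdot|$ in Hermite series. Passing $\varepsilon\to 0$ yields a chaos decomposition
$$N_n([a,b]) - \E\,N_n([a,b]) = \sum_{q\ge 1} J_q(n),$$
where $J_q(n)$ lives in the $q$-th Wiener chaos associated with the Gaussian family $\{\xi_j\}_{j\le n}$. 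Since both factors in the integrand are even functions of a Gaussian, the odd chaoses vanish and only even $q$ contribute.

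The next step is to compute the variance and establish Gaussianity of each chaos piece. Expressing $J_q(n)$ as a multiple Wiener-It\^o integral, its variance is a double integral over $[a,b]^2$ of a polynomial expression in the normalized covariance entries of the joint vector $(H_n(x),H_n'(x),H_n(y),H_n'(y))$, with the leading behaviour governed by powers of the normalized Christoffel-Darboux kernel $\rho_n(x,y) = K_n(x,y)/\sqrt{K_n(x,x)K_n(y,y)}$. Stahl-Totik-Ullmann regularity together with the hypothesis $\sup_n\|p_n\|_{L^\infty[a',b']}<\infty$ delivers the bulk universality $\rho_n(x,x+t/(n\omega(x)))\to \sin(\pi t)/(\pi t)$, with analogous limits for $\partial_x K_n$ and $\partial_x\partial_y K_n$; from these one extracts $\lim_n \E\,J_q(n)^2/n$ and verifies that summation in $q$ recovers $c_{a,b}$ from Theorem~\ref{thm:var}. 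For the central limit statement I would invoke the Peccati-Tudor fourth moment theorem applied to the vector $(J_2(n),\ldots,J_{2Q}(n))/\sqrt{n}$: the required contraction estimates reduce to bounds on iterated integrals of products $\rho_n^{a_1}\cdots \rho_n^{a_k}$ along $[a,b]^k$, which are small thanks to the oscillatory sinc-type decay of $\rho_n$ at scale $1/n$. A uniform $L^2$ tail estimate $\sum_{q>2Q}\E\,J_q(n)^2/n \to 0$ as $Q\to\infty$ then upgrades the truncated CLT to the full one.

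The principal obstacle, absent from the Chebyshev and trigonometric settings of \cite{ADL,GW,AL}, is that for a generic orthogonal ensemble the process $H_n$ is not translation-invariant: the natural oscillation scale $1/(n\omega(x))$ and the limiting local kernel both depend on the base point $x$. My plan for handling this is to partition $[a,b]$ into intervals of length $\delta_n$ with $1/n \ll \delta_n \ll 1$ on which $\omega$ is essentially constant but which still contain many natural periods, perform the chaos and fourth-moment analysis on each subinterval with its own sinc-scaled limit, and aggregate. Ensuring that cross-interval interactions are negligible (via off-diagonal decay of $\rho_n$) and that the fourth-moment and variance estimates are uniform in the base point $x$ is the main technical challenge; the uniform sup-norm bound on $p_n$ and the regularity hypothesis, combined with interior estimates on the Christoffel function, are the quantitative inputs I expect to use for this uniformity.
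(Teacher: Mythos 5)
Your proposal follows the same broad Wiener-chaos route as the paper: Hermite expansion of the Kac--Rice functional, a chaotic (fourth-moment) CLT criterion, bulk sinc universality of the normalized Christoffel--Darboux kernel from \cite{LPX1,LPX2,LP}, and an $L^2$ tail estimate in the chaos order $q$. The place where you diverge is in how the non-stationarity is handled. The paper rescales once and for all to $[na,nb]$ and then proves the contraction condition \eqref{eqn:cross} \emph{globally} on the whole rescaled interval, using nothing more than the uniform decay bounds $|\overline{r}_n(s,t)|,|\tilde r'_n(s,t)|,|\tilde r''_n(s,t)|\le C/(|s-t|+1)$ from Lemma \ref{lemma:bounds} (Christoffel--Darboux plus Bernstein, plus lower bounds on the Christoffel function). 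No partition of $[a,b]$ is introduced at that stage; the integrals collapse by the convolution-type estimate to $O(n\log^3 n)=o(n^2)$. Localization appears only in the verification of the tail condition \eqref{eqn:tail}: the rescaled interval is tiled by blocks of a \emph{fixed} length $T_0$, on each of which $\overline T_n$ converges (Lemma \ref{lemma:local:single}) to a stationary sinc-correlated limit $\overline T_{\infty,\theta}$ whose law depends on the base point through $\omega(\theta)$, and the tail of the chaos variance is compared to the tail of $\Var(N_{\infty,\theta}((0,T_0)))$ via the Kac--Rice formula and a uniform bound $\Var(N_n(I_\theta))\le C_0$ (Lemma \ref{lemma:local:var}). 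Your plan to tile $[a,b]$ at an intermediate scale $\delta_n$ with $1/n\ll\delta_n\ll 1$ and run the full chaos/fourth-moment analysis block-by-block is not wrong in spirit, but it gives up the clean fixed-window local-limit picture (in rescaled coordinates your windows have length $n\delta_n\to\infty$, so they do not converge to a stationary limit on a fixed interval), it forces you to handle cross-block contractions as a separate issue, and it requires a Lindeberg-type aggregation over $\sim 1/\delta_n\to\infty$ blocks. The lesson from the paper's proof is that one should \emph{not} localize the fourth-moment step: the uniform off-diagonal decay of the three rescaled correlations already does the job globally, and localization (at a fixed rescaled scale, sent large only after $n\to\infty$) is needed exclusively for the tail in $q$. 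Also, a point to be careful about: the normalization by the variance $V_n(t)$ of $\widetilde T_n(t)$ and by the variance $v_n(t)$ of $\overline T'_n(t)$ are both $t$-dependent, so the ``correlation'' $\tilde r'_n(s,t)$ coupling $\overline T_n(s)$ and the standardized derivative $\CT'_n(t)$ is \emph{not} a derivative of $\overline r_n$ and is not symmetric in $(s,t)$; the paper tracks both $\tilde r'_n(s,t)$ and $\tilde r'_n(t,s)$ separately in Claims \ref{claim:h}--\ref{claim:Spq}, a bookkeeping subtlety that your sketch elides.
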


This result basically resolves Problem 1.5 from \cite{list}. There have been exciting developments regarding asymptotic normality for the number of real roots. These include, beside the results of \cite{ADL, AL,GW} mentioned above, the results \cite{DV,NS} for random Weyl polynomials and random Weyl series, \cite{Mas, ONgV} for random Kac polynomials and generalization,  and \cite{AnL,Dal} for random elliptic polynomials, which were proved using  very different methods.  For the stationary trigonometric polynomials,  \cite{GW} was able to reduce to a model where the correlations over far-apart points vanish, and then used a result of Berk for sum of long-range independent terms,  while for the crossings of these polynomials \cite{AL} computed all the central moments rather precisely,  For the Kac polynomials and generalizations, \cite{ONgV,Mas}  used comparison methods,    while for the Weyl polynomials and series \cite{DV}  used the method of cumulants based on a very fine understanding of the correlation functions (see also \cite{NS}).  Finally,  to handle the classical random trigonometric polynomials \cite{Dal, ADL} used the Wiener chaos decomposition,  and this is the method we will be using in this note. Among other things, one highlight of our work is that, because of the nature of the random orthogonal polynomials we are working with (see Section \ref{sec:proof:bounds}), our rescaled process $\overline{T}_n$ (to be defined below) does not have global limit, but over microscopic intervals it does converge to gaussian stationary processes. Interestingly, these processes are not necessarily the same unless $\omega(.)$ is a constant, see Section \ref{sect:tail} for further details.

Before proving our result, we deduce a few examples below.

\begin{corollary} [Szeg\H{o} condition] Let $\mu$ be a measure supported on $[-1,1]$ satisfying the Szeg\H{o} condition
	$$\int_{-1}^{1} \log \mu'(x)\frac{dx}{\pi \sqrt {1-x^{2}}}>-\infty.$$
	Let $[a', b']$ be a subinterval of $(-1, 1)$, in which $\mu$ is absolutely continuous while $\mu'$ is positive and continuous in $[a', b']$. Assume moreover that its local modulus of continuity
	$$\Omega(t) = \sup\{|\mu'(x) - \mu'(y)|: x, y\in [a', b'] \text{ and } |x-y|\le t\}, \quad t>0,$$
	satisfies the Dini-Lipschitz condition
	$$\int_{0}^{1} \frac{\Omega(t)}{t}dt<\infty.$$
	Then for all $[a, b]\subset (a', b')$, $N_n(a, b)$ satisfies the CLT.
	
	In particular, when $\mu$ is the Legendre weight $\mu' = 1$, $N_n(a, b)$ satisfies the CLT for any $[a, b]\subset (-1, 1)$.
\end{corollary}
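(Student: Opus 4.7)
The plan is to reduce the corollary to Theorem~\ref{thm:CLT} by verifying its two non-trivial hypotheses that are not directly listed in the corollary, namely (i) regularity of $\mu$ in the sense of Stahl--Totik--Ullmann, i.e.\ $\lim_{n\to\infty} \gamma_n^{1/n} = 1/\mathrm{cap}(\supp \mu)$, and (ii) the uniform bound $\sup_{n\ge 1} \|p_n\|_{L^\infty[a', b']} < \infty$. The remaining hypotheses (absolute continuity of $\mu$ on $[a',b']$ with $\mu'$ positive and continuous there, and $[a,b]\subset (a',b')$) are already assumed.

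For (i), on $[-1,1]$ the Szeg\H{o} condition is well known to imply much more than regularity: the classical Szeg\H{o} limit theorem gives $\gamma_n / 2^n \to 1/D(0)$, where $D$ is the Szeg\H{o} function associated with $\mu'$. In particular $\gamma_n^{1/n} \to 2$, and since $\mathrm{cap}([-1,1]) = 1/2$, this is exactly the Stahl--Totik--Ullmann regularity.

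The heart of the matter is (ii). Here one invokes the classical Bernstein--Szeg\H{o} asymptotics for orthonormal polynomials (see Szeg\H{o}'s book, Chapter~12, or \cite{Freud, ST}): under the Szeg\H{o} condition together with $\mu'$ being positive and continuous on an interior subinterval, and satisfying a Dini--Lipschitz condition on that subinterval, one has, uniformly for $x = \cos\theta$ with $x \in [a',b']$,
$$p_n(\cos\theta)\sqrt{\mu'(\cos\theta)\sin\theta} \;=\; \sqrt{\tfrac{2}{\pi}}\,\cos\bigl(n\theta + \psi_n(\theta)\bigr) + o(1)$$
as $n\to\infty$. Since $\mu'(\cos\theta)$ and $\sin\theta$ are bounded below by positive constants on the compact $\theta$-range corresponding to $[a',b'] \subset (-1,1)$, the left-hand side is uniformly bounded, yielding $\sup_n\|p_n\|_{L^\infty[a',b']}<\infty$. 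The Dini--Lipschitz hypothesis is precisely what upgrades the pointwise Szeg\H{o} asymptotic to a uniform one on $[a',b']$, so it is essential here. With (i) and (ii) in hand, Theorem~\ref{thm:CLT} applies and gives the CLT for $N_n(a,b)$.

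For the Legendre special case $\mu' \equiv 1$ on $[-1,1]$, all conditions are trivially verified: the Szeg\H{o} integral is $0 > -\infty$; $\mu'$ is smooth; the modulus of continuity $\Omega(t) \equiv 0$, so the Dini--Lipschitz integral vanishes. Hence any $[a,b]\subset(-1,1)$ is admissible by choosing $[a',b']$ slightly larger. The main obstacle in the proof is, essentially, bookkeeping: one must locate the appropriate uniform Bernstein--Szeg\H{o} asymptotic in the classical literature, since the weaker pointwise version would not suffice to give the uniform $L^\infty$ control required by Theorem~\ref{thm:CLT}.
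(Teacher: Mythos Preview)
Your proposal is correct and follows essentially the same route as the paper. The paper itself gives no explicit proof of this corollary; it is stated as a direct consequence of Theorem~\ref{thm:CLT}, with the verification of the hypotheses (regularity and $\sup_n\|p_n\|_{L^\infty[a',b']}<\infty$) implicitly deferred to the classical Szeg\H{o}--Bernstein asymptotics and to \cite{LP}, where the same conditions were checked for Theorem~\ref{thm:var}. Your write-up simply fills in those details, and does so correctly.
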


Following \cite{LP}, beyond Szeg\H{o} condition, we have

\begin{remark} Theorem \ref{thm:CLT} also applies to the following classes of measures that do not necessarily satisfy Szeg\H{o} condition:
\begin{itemize}
\item Measures associated with weights supported on several disjoint interval that satisfy the smoothness conditions in the classic paper of Widom \cite{Widom}.
\vskip .1in
\item Measures associated with exponential weights satisfying conditions in \cite{LevinLubinsky}. For example,
$$\mu'(x) = \exp(-\exp_k(1-x^{2})^{-\alpha}), x\in (-1, 1),$$
where $\alpha>0$ and $\exp_k(\cdot) = \exp(\exp(\dots \exp(\cdot)))$ denotes the $k$-th iterated exponential.
\end{itemize}
\end{remark}

For future directions, we hope to study the variance (as mentioned above) and the asymptotic normality of the number of roots for the {\it entire} interval $[a',b']$. There are technical problems here, especially in the analysis side as we will not have Lemma \ref{lemma:3.2} and Lemma \ref{lemma:3.3} near the edges $a',b'$. In another direction, it seems interesting to extend the variance estimate and CLT fluctuation to different types of randomness,  and this is left for further studies.

\subsection{Some preparations} \label{prep}

Here we fix some notations that will be used throughout the proof.  Define
$$T_n(t) = H_n(t/n) = \sum_{j=0}^n \xi_j p_j(t/n),\quad  t \in [na,nb]$$
and
$$\widetilde{T}_n(t) =\frac{1}{\sqrt{n}} \sum_{j=0}^n \xi_j p_j(t/n).$$
Hence, instead of finding the roots of $H_n$ in $[a,b]$,  we will be working with the number of roots $N ([na,nb])$ of $T_n$ (equivalently, $\widetilde{T}_n$). We will also use the following notations from \cite{LP} for the reproducing kernel associated with orthogonal polynomials:
\begin{equation}\label{K}
K_n(x,y) = \sum_{j=0}^{n} p_{j}(x) p_{j}(y),
\end{equation}
and for nonnegative integers $l,m$, its derivatives are denoted by
\begin{equation}\label{Klm}
K_n^{(l,m)}(x,y) = \sum _{j=0}^{n} p_{j}^{(l)}(x) p_{j}^{(m)}(y).
\end{equation}%

The correlation between $\widetilde{T}_n(t), \widetilde{T}_n(s)$ for any $t,s$ is
$$r_n(s,t) = \frac{1}{n}\sum_j p_j(s/n) p_j(t/n) = \frac{1}{n} K_n\left(s/n,t/n\right).$$
thus in particular the variance of $T_n(t)$ is
\begin{align}
\label{e.r0}
V_n(t)^2=r_n(t,t) &= \frac{1}{n}\sum_j (p_j(t/n))^2 = \frac{1}{n} K_n\left(t/n,t/n\right).
\end{align}
Observe that
\begin{align}
\label{e.r1}
\frac{\partial r_n(t,t)}{\partial t} &=  \frac{2}{n^2}\sum_j p_j(t/n) p_j'(t/n) = \frac{2}{n^2} K_n^{(0,1)}(t/n,t/n),\\
\label{e.r11}
\frac{\partial^2 r_n(s,t)}{\partial s \partial t} &=  \frac{1}{n^3}\sum_j p_j'(s/n) p_j'(t/n) = \frac{1}{n^3} K_n^{(1,1)}(s/n,t/n).
\end{align}

We denote the standardization of $\widetilde{T}_n$ by $\overline{T}_n$:
$$\overline{T}_n(t) = \frac{\widetilde{T}_n(t)}{V_n(t)}.$$
Let $\overline{r}_n(s,t)$ be the correlation of $\overline{T}_n(s)$ and $\overline{T}_n(t)$,  then
\begin{align}\label{eqn:rb}
\overline{r}_n(s,t) = \E (\overline{T}_n(s)  \overline{T}_n(t)) = \frac{r_n(s,t)}{\sqrt{r_n(s,s) r_n(t,t)}} = \frac{K_n(s/n,t/n)}{\sqrt{K_n(s/n,s/n) K_n(t/n,t/n)}}.
\end{align}
For brevity, we let
$$S_1(t):=\overline{T}_n(t) = \sum_j \xi_j q_j(t) = \frac{1}{\sqrt{n}} \sum_{j=1}^n \xi_j \frac{p_j(t/n)}{\sqrt{r_n(t,t)}}.$$
Then $S_1'(t) = \sum_j \xi_j q_j'(t)$,
$$\E S_1(s) S_1'(t) = \sum_j q_j(s) q_j'(t) = \frac{\partial \bar{r}_n(s,t)}{\partial t}\quad \mbox{ and }\quad \E S_1'(s) S_1'(t) = \sum_j q_j'(s) q_j'(t) = \frac{\partial^2 \bar{r}_n(s,t)}{\partial t \partial s}.$$
In particular,
$$\E((\overline{T}_n'(t))^2)= \E ((S_1'(t))^2) = \frac{\partial^2 \bar{r}_n(s,t)}{\partial t \partial s}|_{s=t}.$$

For brevity,  we  let $v_n(t)$ be the standard deviation of $\overline{T}'_n(t)$,  namely
\begin{equation}\label{def:vn}
v_n(t) = \sqrt{ \frac{\partial^2 \bar{r}_n(s,t)}{\partial t \partial s}|_{s=t}}=\sqrt{\E((\overline{T}_n'(t))^2)}.
\end{equation}

Let $\CT_n'(t)$ denote the standardization of $\overline{T}'_n(t)$:
$$\CT_n'(t) :=\frac{\overline{T}'_n(t)}{v_n(t)} = \sum_j \xi_j \frac{q_j'(t)}{v_n(t)} = \frac{1}{\sqrt{n}} \sum_{j=1}^n \xi_j \frac{1}{v_n(t)}\left[\frac{p_j(t/n)}{\sqrt{r_n(t,t)}}\right]'.$$
We stress that $\CT_n'(t)$ is not the derivative of $\CT_n(t)$,  which is not even defined here.

Note that for each $t$ by definition $\overline{T}_n(t)$ and $\CT_n'(t)$ are independent because they are jointly Gaussian and their correlation is 0. In general, we will define the correlation between $\overline{T}_n(s)$ and $\CT_n'(t)$ by
\begin{equation}\label{def:tilde:r'}
\widetilde{r}'_n(s,t) := \Cov(\overline{T}_n(s), \CT_n'(t) ) = \frac{1}{n}  \frac{1}{v_n(t)} \sum_j  \frac{p_j(s/n)}{\sqrt{r_n(s,s)}}  \left[\frac{p_j(t/n)}{\sqrt{r_n(t,t)}}\right]'
\end{equation}
(here we abused notations a bit, since $\tilde{r}'_n(s,t)$ is not a derivative of any function) and the correlation between $\CT'_n(s)$ and $\CT_n'(t)$ by
\begin{equation}\label{def:tilde:r''}
\widetilde{r}''_n(s,t) := \Cov(\CT_n'(s) , \CT_n'(t) ) =  \frac{1}{n}   \frac{1}{v_n(s) v_n(t)} \sum_j \left[\frac{p_j(s/n)}{\sqrt{r_n(s,s)}}\right]' \left[\frac{p_j(t/n)}{\sqrt{r_n(t,t)}}\right]'.
\end{equation}

\subsubsection{Reformulation   using the reproducing kernel and its derivatives.}
Our goal in this section is to reformulate $v_n$,  $\bar{r}_n$ and its derivatives in terms of $K_n$ and its derivatives.

We start with $v_n(t)$.  We first evaluate the mixed derivative of $\bar{r}_n(s,t)$:
\begin{align*}
\frac{\partial^2 \bar{r}_n(s,t)}{\partial t \partial s} &= \frac{\partial}{\partial t} \Big(\frac{\partial}{\partial s} [r_n(s,t)(r_n(s,s))^{-1/2} (r_n(t,t))^{-1/2}]\Big)\\
&= \frac{\partial^2 r_n(s,t)}{\partial t \partial s} (r_n(s,s))^{-1/2} (r_n(t,t))^{-1/2}\\
 &-\frac{1}{2} \frac{\partial r_n(s,t)}{\partial s} (r_n(s,s))^{-1/2}  \frac{\partial r_n(t,t)}{\partial t} (r_n(t,t))^{-3/2} \nonumber  \\
&- \frac{1}{2}  \frac{\partial r_n(s,t)}{\partial t}\frac{\partial r_n(s,s)}{\partial s} (r_n(s,s))^{-3/2} (r_n(t,t))^{-1/2} \nonumber \\
& + \frac{1}{4} r_n(s,t) \frac{\partial r_n(s,s)}{\partial s} (r_n(s,s))^{-3/2} \frac{\partial r_n(t,t)}{\partial t} (r_n(t,t))^{-3/2}.
\end{align*}
Letting $s=t$ and using the definition of $K_n$ and \eqref{e.r0}, we obtain
\begin{align}\label{eqn:v_n}
v_n(t)^2&= \frac{\partial^2 \bar{r}_n(s,t)}{\partial t \partial s} \Big|_{s=t} \\
&=  \frac{1}{n^3}\sum_j (p_j'(t/n))^2 (r_n(t,t))^{-1} -   \left[\frac{1}{n^2}\sum_j  p_j(t/n) p_j'(t/n)\right]^2 (r_n(t,t))^{-2}  \nonumber\\
&=  \frac{1}{n^2} \frac{K_n^{(1,1)}(t/n,t/n)}{K_n(t/n,t/n)} - \frac{1}{n^2} \left(\frac{K_n^{(0,1)}(t/n,t/n)}{K_n(t/n,t/n)}\right)^2.
\end{align}

Via explicit computation and using \eqref{e.r0} and \eqref{e.r1},  we obtain
 \begin{align}
\tilde{r}'_n(s,t)&= \frac{1}{n} \frac{1}{v_n(t)} \sum_j   \frac{p_j(s/n)}{\sqrt{r_n(s,s)}} \left[\frac{1}{\sqrt{r_n(t,t)}}  p_j(t/n)\right]' \nonumber \\
&= \frac{1}{n}  \frac{1}{v_n(t)} \sum_j  \frac{p_j(s/n)}{\sqrt{r_n(s,s)}} \left[\frac{1}{n}p_j'(t/n) (r_n(t,t))^{-1/2} - \frac{1}{2} p_j(t/n) \frac{\partial r_n(t,t)}{\partial t} (r_n(t,t))^{-3/2}\right] \nonumber \\
&= \frac{1}{n}  \frac{1}{v_n(t)}  \sum_j \Big[ (r_n(s,s))^{-1/2}  (r_n(t,t))^{-1/2} \frac{1}{n} p_j(s/n) p_j'(t/n)  \nonumber\\
&- \frac{1}{2} (r_n(s,s))^{-1/2} (r_n(t,t))^{-3/2}  \frac{\partial r_n(t,t)}{\partial t}   p_j(s/n)    p_j(t/n) \Big] \nonumber.
\end{align}
\begin{align}\label{eqn:r'}
\tilde{r}'_n(s,t)&= \frac{1}{n}  \frac{1}{v_n(t)\sqrt{r_n(s,s) r_n(t,t)}}  \sum_j \Big[ \frac{1}{n} p_j(s/n) p_j'(t/n)] - \frac{1}{2 r_n(t,t)} \frac{\partial r_n(t,t)}{\partial t}   p_j(s/n)    p_j(t/n) \Big] \nonumber \\
&= \frac{1}{v_n(t) \sqrt{K_n(s/n,s/n) K_n(t/n,t/n)}} \left(\frac{K_n^{(0,1)}(s/n,t/n)}{n} - \frac{K_n^{(0,1)}(t/n,t/n)}{n K_n(t/n,t/n)} K_n(s/n,t/n)\right).
\end{align}
Similarly,
\begin{align}\label{eqn:r''}
\tilde{r}''_n(s,t)&= \frac{1}{n}  \frac{1}{v_n(s) v_n(t)}  \sum_j  \left[\frac{p_j(s/n)}{\sqrt{r_n(s,s)}}\right]' \left[\frac{p_j(t/n)}{\sqrt{r_n(t,t)}}\right]' \nonumber \\
&= \frac{1}{n}  \frac{1}{v_n(s) v_n(t)}   \sum_j  \left[\frac{1}{n}p_j'(s/n) (r_n(s,s))^{-1/2} - \frac{1}{2} p_j(s/n) \frac{\partial r_n(s,s)}{\partial s} (r_n(s,s))^{-3/2}\right] \times \nonumber\\
& \times \left[\frac{1}{n} p_j'(t/n) (r_n(t,t))^{-1/2} - \frac{1}{2} p_j(t/n) \frac{\partial r_n(t,t)}{\partial t} (r_n(t,t))^{-3/2}\right] \nonumber \\
&=  \frac{1}{n} \frac{1}{v_n(s) v_n(t)\sqrt{r_n(s,s) r_n(t,t)}}  \sum_j    \Big[\frac{1}{n^2} p_j'(s/n) p_j'(t/n) \nonumber \\
& - \frac{1}{2}   (r_n(t,t))^{-1}  \frac{\partial r_n(t,t)}{\partial t} \frac{1}{n}  p_j'(s/n)    p_j(t/n)  \nonumber\\
&- \frac{1}{2} (r_n(s,s))^{-1}   \frac{\partial r_n(s,s)}{\partial s}  \frac{1}{n} p_j(s/n)    p_j'(t/n) \nonumber \\
&+ \frac{1}{4} (r_n(s,s))^{-1} (r_n(t,t))^{-1} \frac{\partial r_n(s,s)}{\partial s} \frac{\partial r_n(t,t)}{\partial t} p_j(s/n) p_j(t/n)\Big] \nonumber
\end{align}
therefore using \eqref{e.r0}, \eqref{e.r1}, and \eqref{e.r11}, we obtain
\begin{align}
\tilde{r}''_n(s,t)&=  \frac{1}{n}  \frac{1}{v_n(s) v_n(t)\sqrt{r_n(s,s) r_n(t,t)}}  \Big[\frac{K_n^{(1,1)}(s/n,t/n)}{n^2} - \frac{1}{2 r_n(t,t)} \frac{\partial r_n(t,t)}{\partial t} \frac{K_n^{(1,0)}(s/n,t/n)}{n}  \nonumber\\
&- \frac{1}{2 r_n(s,s)} \frac{\partial r_n(s,s)}{\partial s} \frac{K_n^{(0,1)}(s/n,t/n)}{n} + \frac{1}{4 r_n(s,s) r_n(t,t)} \frac{\partial r_n(s,s)}{\partial s} \frac{\partial r_n(t,t)}{\partial t} K_n(s/n,t/n) \Big] \nonumber \\
&=  \frac{1}{v_n(s) v_n(t) \sqrt{K_n(s/n,s/n) K_n(t/n,t/n)}}  \Big[\frac{K_n^{(1,1)}(s/n,t/n)}{n^2} \nonumber \\
&- \frac{K_n^{(0,1)}(t/n,t/n)}{n^2 K_n(t/n,t/n)} K_n^{(1,0)}(s/n,t/n) - \frac{K_n^{(0,1)}(s/n,s/n)}{n^2 K_n(s/n,s/n)} K_n^{(0,1)}(s/n,t/n) \nonumber \\
&+ \frac{K_n^{(0,1)}(t/n,t/n)}{n K_n(t/n,t/n)} \frac{K_n^{(0,1)}(s/n,s/n)}{n K_n(s/n,s/n)} K_n(s/n,t/n) \Big].
\end{align}

\vskip .1in

Using these reformulation,  we will prove the following crucial estimates  in Section \ref{sec:proof:bounds}.

\begin{lemma}\label{lemma:bounds}  With the same assumptions and notations as in Theorem~\ref{thm:var}.there exist constants $c,C>0$ that are independent of $n$, but might depend on $a,b,w,\mu$, such that the following hold:
\begin{itemize}
  \item[(i)] For all $t\in [na,nb]$,
  $$c \le v_n(t) \le C;$$
  \vskip .1in
  \item[(ii)] For all $s,t \in [na,nb]$,
$$|\overline{r}_n(s,t)| \le \frac{C}{|s-t| +1};$$
  \item[(iii)] For all $s,t \in [na,nb]$,
$$|\tilde{r}'_n(s,t)|  \le \frac{C}{|s-t| +1};$$
  \item[(iv)] For all $s,t \in [na,nb]$,
$$|\tilde{r}''_n(s,t)|  \le \frac{C}{|s-t| +1}.$$
\item[(v)] Finally, for any positive integer $k$ there exists a positive constant $C_k$  that is independent of $n$, but might depend on $a,b,\omega,\mu, k$, so that
$$\sum_{j=0}^n (q_j^{(k)}(t))^2 \le C_k,\quad t\in[na,nb],$$
where $q_j(t)=p_j(t/n)/\sqrt{n r_n(t,t)}$ were used in the definition of $S_1(t)$.
\end{itemize}
\end{lemma}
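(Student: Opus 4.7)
My strategy is to reduce every estimate in the lemma to three standard inputs from the theory of orthogonal polynomials with respect to a regular measure: (a) Totik-type Christoffel-function asymptotics, which under our hypotheses give $K_n(x,x) \asymp n$ uniformly for $x$ in a compact subset of $(a',b')$; (b) the Christoffel--Darboux identity
$$K_n(x,y) = \frac{\gamma_{n-1}}{\gamma_n}\cdot\frac{p_n(x)\, p_{n-1}(y) - p_{n-1}(x)\, p_n(y)}{x - y};$$
and (c) Markov--Bernstein-type inequalities which, combined with the hypothesis $\sup_n \|p_n\|_{L_\infty[a',b']} < \infty$ and the fact that $[a,b]$ is strictly inside $[a',b']$, yield $\|p_j^{(l)}\|_{L_\infty[a,b]} \le C_l\, j^l$ for every fixed $l$. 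From (c) one immediately gets $K_n^{(l,l)}(x,x) \le C_l n^{2l+1}$, and hence $K_n^{(l,m)}(x,x) \le C_{l,m} n^{l+m+1}$ by Cauchy--Schwarz. Sine-kernel universality (Lubinsky) provides the matching lower asymptotics whenever we need one.

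\textbf{Proof of (ii), (iii), (iv).} For (ii) I would split according to $|s-t|\le 1$ (where $|\overline r_n|\le 1$ is trivial) and $|s-t|\ge 1$. In the latter range, plug $x=s/n$, $y=t/n$ into the Christoffel--Darboux identity, use the uniform bound on $p_n,p_{n-1}$ and the fact that $\gamma_{n-1}/\gamma_n$ is bounded (by regularity) to get $|K_n(s/n,t/n)| \le Cn/|s-t|$, and divide by $\sqrt{K_n(s/n,s/n)K_n(t/n,t/n)} \asymp n$. For (iii) and (iv) I would first verify, directly from \eqref{eqn:r'} and the analogous display for $\widetilde r''_n$, the algebraic identities
$$\widetilde r'_n(s,t) = \frac{1}{v_n(t)}\,\partial_t \overline r_n(s,t),\qquad \widetilde r''_n(s,t) = \frac{1}{v_n(s)v_n(t)}\,\partial_s\partial_t\, \overline r_n(s,t),$$
so that by (i) the task reduces to bounding the partials of $\overline r_n$. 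Differentiating the Christoffel--Darboux identity once or twice in $y$ and using the Markov--Bernstein bounds $\|p_j^{(l)}\|_{L_\infty[a,b]} \le C_l j^l$ gives $|K_n^{(l,m)}(s/n,t/n)| \le C n^{l+m+1}/|s-t|$ whenever $|s-t|\ge 1$, and after the $1/n$ scaling and the normalizing denominators the desired $C/|s-t|$ comes out. For $|s-t|\le 1$ I invoke Cauchy--Schwarz on the covariances themselves, e.g.\ $|\widetilde r'_n(s,t)| \le \sqrt{\Var(\overline T_n(s))\Var(\CT_n'(t))} = 1$.

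\textbf{Proof of (i).} The upper bound $v_n(t)\le C$ drops out of \eqref{eqn:v_n} by discarding the subtracted square and estimating $K_n^{(1,1)}(x,x)/K_n(x,x) \le Cn^2$ via the Markov--Bernstein bound on $K_n^{(1,1)}$ together with the Christoffel lower bound $K_n(x,x) \ge cn$. For the lower bound I would appeal to sine-kernel universality: Lubinsky's theorem gives
$$\frac{K_n\bigl(x+\tfrac{u}{n\omega(x)},\, x+\tfrac{v}{n\omega(x)}\bigr)}{K_n(x,x)} \longrightarrow \frac{\sin\pi(u-v)}{\pi(u-v)}$$
uniformly in $x$ on $[a,b]$, and, by standard contour-derivative (Montel-type) arguments applied to the monotone Christoffel function, also in the $C^k$ sense in $u,v$ on compact sets. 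Differentiating twice and evaluating at $u=v=0$ extracts $v_n(t)^2 \to \pi^2 \omega(t/n)^2/3$ uniformly in $t\in[na,nb]$; since $\omega$ is continuous and strictly positive on $[a,b]$, this yields $v_n(t)^2 \ge c > 0$ for all large $n$, and the finitely many remaining $n$ are handled trivially.

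\textbf{Proof of (v) and main obstacle.} For (v) I apply Leibniz to $q_j(t) = p_j(t/n)/\sqrt{K_n(t/n,t/n)}$, writing
$$q_j^{(k)}(t) = \sum_{l=0}^k \binom{k}{l}\, n^{-l}\, p_j^{(l)}(t/n)\, F_{k-l}(t),$$
where $F_m(t) := \partial_t^m [K_n(t/n,t/n)]^{-1/2}$ is independent of $j$. Since $K_n(t/n,t/n)\asymp n$ and, via Cauchy--Schwarz combined with $K_n^{(l,l)}(x,x)\le Cn^{2l+1}$, each $t$-derivative of $K_n(t/n,t/n)$ is $O(n)$, Fa\`a di Bruno gives $F_m(t) = O(n^{-1/2})$ for every $m$. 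Squaring, summing in $j$, and bounding the cross sums $\sum_j p_j^{(l)}(t/n) p_j^{(l')}(t/n)$ by $\sqrt{K_n^{(l,l)}K_n^{(l',l')}} \le C n^{l+l'+1}$ yields the uniform bound $C_k$. The step I expect to be most delicate is verifying the off-diagonal estimates $|K_n^{(l,m)}(s/n,t/n)| \le C n^{l+m+1}/|s-t|$ with the correct power of $n$ uniformly in $t\in[na,nb]$: differentiating the Christoffel--Darboux identity generates both an $n$-gain (from $p_n'$) and an additional factor $1/(x-y)$, and one must check that after substitution $|x-y|=|s-t|/n$ these combine into single $1/|s-t|$ decay rather than a worse $1/|s-t|^2$. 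A careful case split between microscopic ($|s-t|\lesssim 1$) and macroscopic separation is the correct fix, but it is the step where error most easily creeps in.
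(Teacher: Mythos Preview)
Your proposal is correct and follows essentially the same route as the paper: both reduce everything to (a) the Christoffel--Darboux off-diagonal bound $|K_n^{(l,m)}(x,y)|\le Cn^{l+m}/(|x-y|+1/n)$, (b) the lower bound $K_n(x,x)\ge cn$, and (c) the sine-kernel universality limits $K_n^{(l,m)}(x,x)/(n^{l+m}K_n(x,x))\to(\pi\omega(x))^{l+m}\tau_{l,m}$, and then feed these into the explicit formulas \eqref{eqn:v_n}, \eqref{eqn:r'}, \eqref{eqn:r''}. Your packaging of (iii)--(iv) via $\widetilde r'_n=v_n^{-1}\partial_t\overline r_n$ and $\widetilde r''_n=(v_n(s)v_n(t))^{-1}\partial_s\partial_t\overline r_n$ is just a compact restatement of those formulas, and your worry about the ``extra'' $1/(x-y)$ from differentiating Christoffel--Darboux is harmless: after substituting $|x-y|=|s-t|/n$ the term $O(1/(x-y)^{k+1})=O(n^{k+1}/|s-t|^{k+1})$ is dominated by $O(n^{k+1}/|s-t|)$ on $|s-t|\ge 1$, so no case split beyond the one you already make is needed. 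One small correction: the boundedness of $\gamma_{n-1}/\gamma_n$ follows from compactness of $\mathrm{supp}\,\mu$ (it is the recurrence coefficient $\int xp_np_{n-1}\,d\mu$), not from regularity alone.
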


\section{Properties of orthogonal polynomials with compact support on $\R$: proof of Lemma \ref{lemma:bounds}} \label{sec:proof:bounds}

In this section, we mainly recall several key (deterministic) properties of orthogonal polynomials from \cite{LP}.  We shall use these properties  to justify Lemma \ref{lemma:bounds} later in the current section.

Our first ingredient is an estimate from \cite[Lemma 3.2]{LP} regarding the growth of lower order derivatives  of the reproducing kernel $K_n$ defined in \eqref{Klm}. We will use assume that the hypotheses of Theorem~\ref{thm:var} hold.
\begin{lemma}\label{lemma:3.2} For $l,m=0,1$ and $l=2,m=0$, and for all $n\ge 1$ and $x,y\in [a,b] \subset (a',b')$, we have
$$|K_n^{(l,m)}(x,y)| \le \frac{C n^{l+m}}{|x-y| + \frac{1}{n}}.$$
\end{lemma}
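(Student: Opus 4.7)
The plan is to combine the Christoffel--Darboux identity with Bernstein-type bounds for derivatives of polynomials and a Cauchy--Schwarz bound on the diagonal. Three preliminary ingredients drive the argument. First, the hypothesis $\sup_n \|p_n\|_{L^\infty[a',b']}<\infty$ built into Theorem~\ref{thm:var}. Second, the regularity of $\mu$ in the sense of Stahl--Totik--Ullmann, which forces the ratio $a_n:=\gamma_n/\gamma_{n+1}$ to remain bounded (it converges to $\operatorname{cap}(\supp(\mu))$). Third, Bernstein's inequality applied on the strict subinterval $[a,b]\subset(a',b')$: since the classical Bernstein weight $1/\sqrt{(b'-x)(x-a')}$ is bounded on $[a,b]$, one obtains $|p_j'(x)|\le C\, j$ on $[a,b]$, and iterating on a nested chain $[a,b]\subset(a_1,b_1)\subset(a',b')$ yields $|p_j''(x)|\le C j^2$ on $[a,b]$ as well.

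For the far regime $|x-y|\ge 1/n$, I would start from the Christoffel--Darboux identity
$$(x-y) K_n(x,y) = a_n \bigl[p_{n+1}(x) p_n(y) - p_n(x) p_{n+1}(y)\bigr]$$
and differentiate $l$ times in $x$ and $m$ times in $y$. Applying the Leibniz rule to the linear factor $(x-y)$ (whose derivatives vanish past order one) yields the recursion
$$K_n^{(l,m)}(x,y) = \frac{a_n\bigl[p_{n+1}^{(l)}(x) p_n^{(m)}(y) - p_n^{(l)}(x) p_{n+1}^{(m)}(y)\bigr] - l\, K_n^{(l-1,m)}(x,y) + m\, K_n^{(l,m-1)}(x,y)}{x-y}.$$
I would then induct on $l+m$, starting from the base case $(0,0)$ where only the numerator's first term is present, and sweeping through the five required pairs $(0,0),(0,1),(1,0),(1,1),(2,0)$. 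The Bernstein step gives $|a_n p_{n+1}^{(l)}(x) p_n^{(m)}(y)|\le C n^{l+m}$, while the inductive hypothesis gives $|K_n^{(l-1,m)}(x,y)|, |K_n^{(l,m-1)}(x,y)| \le C n^{l+m-1}/|x-y|$. Hence $|K_n^{(l,m)}(x,y)|\le Cn^{l+m}/|x-y| + Cn^{l+m-1}/|x-y|^2$, and the assumption $|x-y|\ge 1/n$ absorbs the second term into the first, yielding $|K_n^{(l,m)}(x,y)|\le 2Cn^{l+m}/(|x-y|+1/n)$.

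For the near regime $|x-y|<1/n$, I would fall back on Cauchy--Schwarz applied directly to the defining sum:
$$|K_n^{(l,m)}(x,y)| = \Bigl|\sum_{j=0}^n p_j^{(l)}(x) p_j^{(m)}(y)\Bigr| \le \sqrt{K_n^{(l,l)}(x,x)\, K_n^{(m,m)}(y,y)}.$$
Each diagonal sum is bounded by $K_n^{(k,k)}(z,z)=\sum_{j=0}^n (p_j^{(k)}(z))^2 \le C\sum_{j=0}^n j^{2k}\le Cn^{2k+1}$ via the Bernstein step, giving $|K_n^{(l,m)}(x,y)|\le Cn^{l+m+1}$. In this regime $1/(|x-y|+1/n)\ge n/2$, so this bound is comparable to the desired $Cn^{l+m}/(|x-y|+1/n)$.

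The main technical obstacle is controlling derivatives up to order two uniformly on $[a,b]$: Bernstein's estimate has a weight that blows up at the endpoints of $[a',b']$, and the iterated application needed for $p_j''$ requires a chain of strictly nested subintervals. The strict inclusion $[a,b]\subset(a',b')$ built into the hypothesis provides precisely the room needed to insert such an intermediate interval and to keep all weights bounded on $[a,b]$. Once both regimes are established, taking the maximum of the two bounds delivers the unified estimate $|K_n^{(l,m)}(x,y)|\le Cn^{l+m}/(|x-y|+1/n)$.
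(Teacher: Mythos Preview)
Your approach is essentially the same as the paper's (which only sketches the argument via Christoffel--Darboux plus Bernstein), and the details you supply are correct. One minor correction: regularity in the Stahl--Totik--Ullmann sense gives only $\gamma_n^{1/n}\to 1/\operatorname{cap}(\supp\mu)$, which does \emph{not} by itself force the ratio $a_n=\gamma_n/\gamma_{n+1}$ to converge or stay bounded; the boundedness of $a_n$ instead follows directly from the compact-support hypothesis on $\mu$ (the Jacobi matrix, whose off-diagonal entries are the $a_n$, is bounded if and only if the spectral measure has compact support). With that adjustment your proof goes through.
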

The above lemma can be shown using the Christoffel-Darboux formula
$$K_n(x,y) = \frac{\gamma_{n-1}}{\gamma_n} \frac{p_n(x) p_{n-1}(y) - p_{n-1}(x) p_{n}(y)}{x-y},$$
($\gamma_n$ is the leading coefficient of $p_n$) and  Bernstein's inequality for derivatives, $|P^{(j)}|_{L_\infty [a,b]} \le  C n^j \|P\|_{L_\infty[a',b']}$, which hold for any polynomial of degree $n$.

Additionally,  we will also need several  local limits for  the derivatives of the reproducing kernel $K_n$,  proved  in \cite[Lemma 3.3]{LP}.

\begin{lemma}\label{lemma:3.3} With the same notations as in Theorem \ref{thm:var},  let $[a', b']$ be a subinterval in the support of $\mu$, such that $\mu$ is absolutely continuous there, and its Radon-Nikodym derivative $\mu'$ is positive and continuous there.  Assume that $[a,b] \subset (a',b')$. Let $\omega(x)$ be the density of the equilibrium measure for $\CK=\textup{supp}\,\mu$, i.e., $d\nu_\CK(x)=\omega(x)\,dx.$ Then for $S(u) := (\sin \pi u) / (\pi u)$ and arbitrary non-negative integers $l,m$, we have
\begin{enumerate}[(a)]
\item Uniformly for $x\in [a,b]$ and $u,v$ in a compact subset of $\C$,
$$\lim_{n\to \infty} \frac{K_n^{(l,m)}(x + \frac{u}{n \omega(x)}, x + \frac{v}{n \omega(x)})}{K_n(x,x)} \left(\frac{1}{n \omega(x)}\right)^{l+m} = (-1)^m S^{(l+m)}(u-v). $$
\vskip .1in
\item Uniformly for $x\in [a,b]$,
$$\lim_{n\to \infty} \frac{K_n^{(l,m)}(x,x) \mu'(x)}{n^{l+m+1}} = \pi^{l+m} \omega^{l+m+1}(x) \tau_{l,m}$$
and
$$\lim_{n\to \infty} \frac{K_n^{(l,m)}(x,x) }{n^{l+m} K_n(x,x)} = (\pi \omega(x))^{l+m} \tau_{l,m},$$
where $\tau_{l,m} = (-1)^{(l-m)/2}/(l+m+1)$ if $l+m$ is even and $\tau_{l,m}=0$ otherwise.
\vskip .1in
\item In particular, uniformly for $x\in [a,b]$,
$$\lim_{n\to \infty} \frac{K_n^{(1,0)}(x,x) }{n^{2}}=0$$
and for $l=0,1$,
$$K_n^{(l,l)}(x,x) \ge C n^{2l+1}.$$
\end{enumerate}
\end{lemma}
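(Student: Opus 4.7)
The plan is to invoke two classical inputs from orthogonal-polynomial theory and then deduce (a), (b), (c) by complex-analytic manipulation and direct evaluation. The first input is the bulk universality limit (Lubinsky, with the ratio form available under Stahl-Totik regularity): under the precise hypotheses of Theorem~\ref{thm:var},
\begin{equation*}
\lim_{n\to\infty}\, \frac{K_n\!\left(x+\tfrac{u}{n\omega(x)},\, x+\tfrac{v}{n\omega(x)}\right)}{K_n(x,x)} \;=\; S(u-v),
\end{equation*}
uniformly in $x\in [a,b]$ and in $(u,v)$ over compact subsets of $\C^2$. The second is the Christoffel-function asymptotic $K_n(x,x)/n \to \omega(x)/\mu'(x)$, also uniform in $x\in[a,b]$. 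These are exactly the ingredients that the Stahl-Totik-Ullmann regularity, the positivity and continuity of $\mu'$ on $(a',b')$, and the $L^\infty$-bound $\sup_n\|p_n\|_{L^\infty[a',b']}<\infty$ are engineered to provide (see \cite{LP} and the references therein).

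For part (a), fix $x\in[a,b]$ and set
\begin{equation*}
F_n(u,v) := \frac{K_n\!\left(x+\tfrac{u}{n\omega(x)},\, x+\tfrac{v}{n\omega(x)}\right)}{K_n(x,x)},
\end{equation*}
which is a polynomial in $(u,v)$. A direct computation gives
\begin{equation*}
\partial_u^{l}\partial_v^{m} F_n(u,v) \;=\; \frac{K_n^{(l,m)}\!\left(x+\tfrac{u}{n\omega(x)},\, x+\tfrac{v}{n\omega(x)}\right)}{K_n(x,x)}\,\frac{1}{(n\omega(x))^{l+m}}.
\end{equation*}
The $(l,m)=(0,0)$ universality shows $F_n\to S(u-v)$ locally uniformly on $\C^2$, so $\{F_n\}$ is a normal family; Vitali's theorem then upgrades the convergence to uniform convergence of every complex partial derivative on slightly smaller compact sets. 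Combining this with the identity $\partial_u^l \partial_v^m S(u-v) = (-1)^m S^{(l+m)}(u-v)$ gives (a), with uniformity in $x\in[a,b]$ inherited from the base universality.

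Parts (b) and (c) are algebraic consequences. Setting $u=v=0$ in (a) gives $K_n^{(l,m)}(x,x)/[K_n(x,x)(n\omega(x))^{l+m}]\to (-1)^m S^{(l+m)}(0)$. Expanding $S(u)=\sum_{k\ge 0}(-1)^k(\pi u)^{2k}/(2k+1)!$ yields $S^{(2k)}(0)=(-1)^k\pi^{2k}/(2k+1)$ and $S^{(2k+1)}(0)=0$, and a short sign chase (using $(l-m)/2+2m \equiv (l-m)/2 \pmod 2$ for even $l+m$) rewrites $(-1)^m S^{(l+m)}(0)=\pi^{l+m}\tau_{l,m}$ with the $\tau_{l,m}$ in the statement; this is the second limit of (b), and multiplying by the Christoffel asymptotic produces the first. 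Part (c) then follows directly: $\tau_{1,0}=0$ because $1+0$ is odd, while for $l=0,1$ the limit $\pi^{2l}\omega(x)^{2l+1}\tau_{l,l}/\mu'(x)$ is strictly positive and bounded below on $[a,b]$ by positivity and continuity of $\omega$ and $\mu'$ there, giving the claimed lower bound.

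The genuine difficulty of the lemma is concentrated in the universality input. Its proof for general regular measures with continuous positive density in the bulk is substantial: one proceeds via Christoffel-function estimates, compares $K_n$ with the sine kernel through local approximation by the Legendre/Chebyshev model, and uses the reproducing-kernel property--and this is precisely where the $L^\infty$ boundedness of $\{p_n\}$ enters, allowing complex extension of the rescaled kernel and a normal-families argument in $(u,v)$ (consistent with the pointwise bounds already captured by Lemma~\ref{lemma:3.2}). Once the $(0,0)$ universality is granted, the remainder is a clean application of Vitali's theorem and evaluation on the diagonal, as sketched above.
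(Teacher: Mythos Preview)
The paper does not actually prove Lemma~\ref{lemma:3.3}; it is quoted verbatim as \cite[Lemma~3.3]{LP} (Lubinsky--Pritsker), with the remark that it records ``several local limits for the derivatives of the reproducing kernel $K_n$, proved in \cite[Lemma~3.3]{LP}.'' So there is no in-paper proof to compare against.

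Your sketch is the standard route and is essentially how such a result is obtained in the source: take the scalar bulk universality $F_n(u,v)\to S(u-v)$ (holomorphic in $(u,v)$, uniform on compacts and uniform in $x\in[a,b]$), pass to derivatives by Cauchy's integral formula applied in each complex variable, and then specialize to the diagonal together with the Christoffel-function asymptotic $K_n(x,x)/n\to \omega(x)/\mu'(x)$. Your computation of $(-1)^m S^{(l+m)}(0)=\pi^{l+m}\tau_{l,m}$ is correct, and (c) follows exactly as you say. One small terminological point: what you actually use to go from convergence of $F_n$ to convergence of its partial derivatives is the Weierstrass convergence theorem (Cauchy estimates), not Vitali's theorem; the normal-families step is not needed once the uniform $(0,0)$ limit is already in hand. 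Otherwise the argument is sound and matches the approach of \cite{LP}.
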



\begin{proof}[Proof of Lemma \ref{lemma:bounds}]
In what follows, positive constants $A,B,C$ are independent of $n,s,t$ throughout the proof.  They may also depend on other parameters and may differ from one place to another.

(i) Using (b) of Lemma \ref{lemma:3.3}, we obtain that
\[
\lim _{n\rightarrow \infty }\frac{1}{n^2} \frac{K_{n}^{(1,1)}( x,x)}{K_{n}( x,x)} = A\, \omega(x)^2 \quad \mbox{and} \quad
\lim _{n\rightarrow \infty }\frac{1}{n} \frac{K_{n}^{(0,1)}( x,x)}{K_{n}( x,x)} = 0
\]
hold uniformly in $x\in[a,b]$. Here, $\omega(x)$ is the density of the equilibrium measure for $\textup{supp}\,\mu$, which is continuous and positive on $[a,b].$ Thus (i) follows directly from the representation \eqref{eqn:v_n} for $v_n(t)^2.$

(ii) It follows from Lemma \ref{lemma:3.2} and (c) of Lemma \ref{lemma:3.3} that
\[
|K_{n}(x,y)| \leq \frac{A}{|x-y| + 1/n} \quad \mbox{and} \quad K_n(x,x) \geq B\,n
\]
 for all $x,y\in[a,b]$, where $A,B>0$ do not depend on $x,y$. Setting $x=s/n$ and $y=t/n$, we estimate by \eqref{eqn:rb} that
 \[
 |\overline{r}_n(s,t)| \le \frac{A}{B(|s-t| +1)},\quad s,t \in [na,nb].
 \]

(iii) Since $K_n(t/n,t/n) \geq B\,n$ for all $t\in[na,nb]$, as in the proof of (ii), we obtain from (i) that
\[
v_n(t) \sqrt{K_n(s/n,s/n) K_n(t/n,t/n)} \ge c B\,n, \quad s,t \in [na,nb].
\]
Hence,
\[
|\tilde{r}'_n(s,t)| \le A \left(\frac{|K_n^{(0,1)}(s/n,t/n)|}{n^2} + \frac{|K_n^{(0,1)}(t/n,t/n)|}{n^2 K_n(t/n,t/n)} |K_n(s/n,t/n)|\right)
\]
by \eqref{eqn:r'}.

Using  Lemma \ref{lemma:3.2} we estimate that
\[
|K_{n}(s/n,t/n)| \leq \frac{An}{|s-t| + 1} \quad \mbox{and} \quad |K_n^{(0,1)}(s/n,t/n)| \leq \frac{B\,n^2}{|s-t| + 1},\quad s,t \in [na,nb].
\]
Furthermore, (b) of Lemma \ref{lemma:3.3} shows that
\begin{align} \label{aux1}
\lim _{n\rightarrow \infty }\frac{1}{n} \frac{K_{n}^{(0,1)}(t/n,t/n)}{K_{n}(t/n,t/n)} = 0
\end{align}
uniformly for $t\in[na,nb]$. Combining the last three estimates, we arrive at the statement of (iii).

(iv) We proceed in a similar way as in the proof of (iii), using (i) and the estimate $K_n(t/n,t/n) \geq B\,n$ for all $t\in[na,nb]$ to show that
\[
v_n(t) v_n(s) \sqrt{K_n(s/n,s/n) K_n(t/n,t/n)} \ge c^2 B\,n, \quad s,t \in [na,nb].
\]
Using Lemma \ref{lemma:3.2} as before, we have for $l,m=0,1$ that
\[
|K_n^{(l,m)}(s/n,t/n)| \leq \frac{C\,n^{l+m+1}}{|s-t| + 1},\quad s,t \in [na,nb].
\]
The latter two estimates applied to \eqref{eqn:r''} give
\begin{align*}
|\tilde{r}''_n(s,t)| \le \frac{A}{|s-t| + 1} \Bigg[ 1 &+ \frac{|K_n^{(0,1)}(t/n,t/n)|}{n K_n(t/n,t/n)} + \frac{|K_n^{(0,1)}(s/n,s/n)|}{n K_n(s/n,s/n)} \\
&+ \frac{|K_n^{(0,1)}(t/n,t/n)|}{n K_n(t/n,t/n)} \frac{|K_n^{(0,1)}(s/n,s/n)|}{n K_n(s/n,s/n)} \Bigg].
\end{align*}
Taking into account \eqref{aux1}, we complete the proof.

(v) This part of proof is based on similar ideas, so that we give a sketch. Using the chain rule, we have
$$q_j^{(k)}(t)  = \left( \frac{1}{\sqrt{n}}p_j(t/n) (r_n(t,t))^{-1/2}\right)^{(k)} =  \frac{1}{\sqrt{n}} \sum_{l=0}^k \binom{k}{l} \frac{1}{n^l}\, p_j^{(l)}(t/n)   \big((r_n(t,t))^{-1/2}\big)^{(k-l)}.$$
Setting $s_i(t) :=  \big((r_n(t,t))^{-1/2}\big)^{(i)}$ for compactness, we further rewrite
\begin{equation}\label{l_2_norm_q_j}
  \sum_{j=0}^n (q_j^{(k)}(t))^2 = \sum_{l_1,l_2=0}^k \binom{k}{l_1}\binom{k}{l_2} \frac{K_n^{(l_1,l_2)}(t/n,t/n)}{n^{l_1+l_2+1}}\, s_{k-l_1}(t) s_{k-l_2}(t).
\end{equation}
Since $r_n(t,t) =  \frac{1}{n} K_n(t/n,t/n)$, we obtain from part (b) of Lemma \ref{lemma:3.3} that $|r_n^{(j)}(t,t)|$ is bounded above, uniformly in $n\in\N$ and $t \in [na,nb]$, by a constant depending on $j, a,b,\omega,\mu$. Applying part (c) of Lemma \ref{lemma:3.3} on the lower bound of $r_n(t,t)$, we can now estimate $|s_i(t)|$ from above, uniformly in $n\in\N$ and $t \in [na,nb]$, by a constant depending on $i, a,b, \omega, \mu$. Using these estimates in \eqref{l_2_norm_q_j}, and applying Lemma \ref{lemma:3.3} (b) again to bound the terms $K_n^{(l_1,l_2)}(t/n,t/n)/n^{l_1+l_2+1},$ we obtain the desired result.
\end{proof}


To complete the section, we cite here two more probabilistic results from \cite{LP} that will be useful later on.
\begin{lemma}\label{lemma:2.2} For any $(a_n,b_n) \subset (a',b')$, we have
$$\E N_n([a_n,b_n]) = \frac{1}{\pi} \int_{a_n}^{b_n} \rho_1(x)dx$$
and
$$\Var(N_n([a_n,b_n])) = \int_{a_n}^{b_n}\int_{a_n}^{b_n} (\rho_2(x,y) - \rho_1(x) \rho_1(y)) dx dy + \int_{a_n}^{b_n} \rho_1(x)dx,$$
where
$$\rho_1(x) = \frac{1}{\pi} \sqrt{ \frac{K_n^{(1,1)}(x,x) }{ K_n(x,x)} - \left(\frac{K_n^{(0,1)}(x,x) }{ K_n(x,x)}\right)^2}$$
and
$$\rho_2(x,y) = \frac{1}{\pi^2 \sqrt{\Delta}} \left(\sqrt{\Omega_{11}\Omega_{22}-\Omega^2_{12} } + \Omega_{12} \arcsin \left(\frac{\Omega_{11}}{\sqrt{\Omega_{11}\Omega_{22}}}\right)\right)$$
where
$$\Delta = K_n(x,x) K_n(y,y) - K_n^2(x,y)$$
and $\Omega_{ij}$ are the entries of the covariance matrix of $(P_n'(x), P_n'(y))$ conditioning on $P_n(x)=P_n(y)=0$,
$$\Omega_{11}(x,y)= K_n^{(1,1)}(x,x) - \frac{1}{\Delta} \Big( K_n^{}(y,y) (K_n^{(0,1)}(x,x))^2- 2 K_n^{}(x,y) K_n^{(0,1)}(x,x) K_n^{(0,1)}(y,x) +K_n^{}(x,x) (K_n^{(0,1)}(y,x))^2 \Big)$$
and
$$\Omega_{22}(x,y)= K_n^{(1,1)}(y,y) - \frac{1}{\Delta} \Big( K_n^{}(y,y) (K_n^{(0,1)}(x,y))^2- 2 K_n^{}(x,y) K_n^{(0,1)}(x,y) K_n^{(0,1)}(y,y) +K_n^{}(x,x) (K_n^{(0,1)}(y,y))^2 \Big)$$
and
\begin{align*}
\Omega_{12}(x,y)&= K_n^{(1,1)}(x,y) - \frac{1}{\Delta} \Big( K_n^{}(y,y) K_n^{(0,1)}(x,x)K_n^{(0,1)}(x,y)-  K_n^{}(x,y) K_n^{(0,1)}(x,y) K_n^{(0,1)}(y,x)\\
& -K_n^{}(x,y) K_n^{(0,1)}(x,x) K_n^{(0,1)}(y,y) + K_n^{}(x,x)  K_n^{(0,1)}(y,x) K_n^{(0,1)}(y,y)\Big).
\end{align*}
\end{lemma}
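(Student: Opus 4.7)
The identities of Lemma~\ref{lemma:2.2} are the one- and two-point Kac--Rice formulas specialized to the centered smooth Gaussian polynomial $H_n$. The plan is to invoke the standard Kac--Rice formulas and then explicitly perform the Gaussian conditional calculations, all of which express covariances and conditional variances in terms of $K_n$ and its mixed derivatives. Non-degeneracy of the relevant joint densities (needed to apply Kac--Rice) follows from positive-definiteness of the Gram matrix of $\{p_j\}$, which for $x\ne y$ in $(a',b')$ gives $\Delta = K_n(x,x)K_n(y,y)-K_n(x,y)^2 > 0$, together with almost sure simplicity of the roots.

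For the first identity I would start from $\rho_1(x) = \E[|H_n'(x)| \mid H_n(x)=0]\, f_{H_n(x)}(0)$, which is the content of one-point Kac--Rice. Since $(H_n(x), H_n'(x))$ is centered Gaussian with covariance entries $K_n(x,x)$, $K_n^{(0,1)}(x,x)$, $K_n^{(1,1)}(x,x)$, one has $f_{H_n(x)}(0) = (2\pi K_n(x,x))^{-1/2}$, and the Gaussian regression formula shows $H_n'(x)\mid H_n(x)=0$ is centered Gaussian with variance $K_n^{(1,1)}(x,x) - (K_n^{(0,1)}(x,x))^2/K_n(x,x)$. Combining these via $\E|Z| = \sqrt{2/\pi}\,\sigma$ produces the stated expression for $\rho_1$, and integration yields $\E N_n([a_n,b_n])$.

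For the variance I would invoke the factorial moment identity $\E[N_n(N_n-1)] = \int\!\!\int \rho_2(x,y)\,dx\,dy$, so that $\Var(N_n) = \E[N_n(N_n-1)] + \E N_n - (\E N_n)^2$ rearranges to the displayed formula. The two-point intensity is $\rho_2(x,y) = \E[|H_n'(x)H_n'(y)| \mid H_n(x)=H_n(y)=0]\cdot f_{(H_n(x),H_n(y))}(0,0)$, with the joint density factor equal to $1/(2\pi\sqrt{\Delta})$. The core step is to identify the conditional covariance matrix $\Omega=(\Omega_{ij})$ of $(H_n'(x),H_n'(y))$ given $H_n(x)=H_n(y)=0$ via the Gaussian regression formula
\[
\Omega = \Sigma_{\mathrm{der}} - \Sigma_{\mathrm{cross}}\,\Sigma_{\mathrm{val}}^{-1}\,\Sigma_{\mathrm{cross}}^{\top},
\]
where the value block $\Sigma_{\mathrm{val}}$ has entries $K_n(x,x),K_n(x,y),K_n(y,y)$, the derivative block $\Sigma_{\mathrm{der}}$ has entries $K_n^{(1,1)}$ at the appropriate pairs, and the cross block has entries $K_n^{(0,1)}$; a direct $2\times 2$ inversion of $\Sigma_{\mathrm{val}}$ (whose determinant is $\Delta$) then expands cleanly into the compact expressions for $\Omega_{11},\Omega_{22},\Omega_{12}$ listed in the lemma.

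Finally, for a centered bivariate Gaussian $(Z_1,Z_2)$ with covariance $\Omega$ one has the classical identity
\[
\E|Z_1 Z_2| = \tfrac{2}{\pi}\Bigl(\sqrt{\Omega_{11}\Omega_{22}-\Omega_{12}^2} + \Omega_{12}\arcsin\bigl(\Omega_{12}/\sqrt{\Omega_{11}\Omega_{22}}\bigr)\Bigr),
\]
obtained by diagonalizing the covariance or via direct integration in polar coordinates; multiplying by the joint density $1/(2\pi\sqrt{\Delta})$ produces the stated formula for $\rho_2$. The main obstacle is not conceptual but bookkeeping: carrying out the $2\times 2$ inversion cleanly so that every term acquires a $1/\Delta$ denominator and each $\Omega_{ij}$ reduces to the listed form, and separately verifying almost-sure simplicity of roots and non-degeneracy so that the Kac--Rice formulas apply without correction terms.
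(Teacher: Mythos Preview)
Your approach is correct and is exactly what the paper has in mind: the paper does not give a proof of this lemma but simply states that ``This result is \cite[Lemma 2.2]{LP}, which can be proved using Kac--Rice's formula,'' and your outline (one- and two-point Kac--Rice, Gaussian regression to obtain the conditional covariance $\Omega$, and the classical identity for $\E|Z_1Z_2|$) is the standard derivation behind that citation. Note, incidentally, that the argument of the $\arcsin$ in the statement should be $\Omega_{12}/\sqrt{\Omega_{11}\Omega_{22}}$ as you wrote, and that the extra $1/\pi$ in the displayed expectation formula is inconsistent with the variance formula; both appear to be typos in the statement rather than issues with your argument.
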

This result is \cite[Lemma 2.2]{LP}, which can be proved using Kac-Rice's formula. Note that by Lemma \ref{lemma:3.2} and (c) of Lemma \ref{lemma:3.3}, there exists a constant $C$ such that for all $x\in [a,b]$
\begin{equation}\label{eqn:rho1}
\rho_1(x) \le Cn.
\end{equation}
For the second-order correlations we can use Lemma \ref{lemma:3.3} to obtain (see \cite[Lemma 2.4]{LP}) the following.
\begin{lemma}\label{lemma:2.4} We have
\begin{enumerate}[(i)]
\item Uniformly for $u$ in compact subset of $\C\bs\{0\}$ and $x\in [a,b]$ and $y= x + \frac{u}{n\omega(x)}$, there is an explicit function $\Xi(.)$ (see \cite[(1.7)]{LP}) such that
$$\left(\frac{1}{n \omega(x)}\right)^2 (\rho_2(x,y) - \rho_1(x) \rho_1(y)) = \Xi(u) +o(1).$$
\item Let $\eta>0$, then there exists $C$ such that for $x\in [a,b]$ and $y= x + \frac{u}{n\omega(x)}, u \in [-\eta, \eta]$,
$$|\rho_2(x,y) - \rho_1(x) \rho_1(y))|\le Cn^2.$$
\end{enumerate}
\end{lemma}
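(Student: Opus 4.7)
The plan is to reduce everything to the two asymptotic regimes now available for $K_n$ and its derivatives: the Christoffel--Darboux scaling limit in Lemma~\ref{lemma:3.3}(a),(b), which controls on- and off-diagonal derivatives of $K_n$ near a fixed $x\in[a,b]$ at the microscopic scale $1/n$, and the uniform polynomial decay from Lemma~\ref{lemma:3.2} at longer scales. The ingredients $\rho_1,\rho_2,\Delta,\Omega_{11},\Omega_{22},\Omega_{12}$ in Lemma~\ref{lemma:2.2} are built entirely from such kernel derivatives, so the scheme is to substitute $y=x+u/(n\omega(x))$, factor out the natural powers of $K_n(x,x)$ and $n\omega(x)$, and pass to the limit.

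For (i), I would substitute $y=x+u/(n\omega(x))$ into $\Delta$ and each $\Omega_{ij}$. By Lemma~\ref{lemma:3.3}(a), every off-diagonal factor equals
\[
K_n^{(l,m)}(x,y) = K_n(x,x)(n\omega(x))^{l+m}\bigl((-1)^m S^{(l+m)}(u)+o(1)\bigr)
\]
uniformly, while by Lemma~\ref{lemma:3.3}(b) every on-diagonal factor $K_n^{(l,m)}(x,x)$ equals $K_n(x,x)(n\omega(x))^{l+m}(\pi^{l+m}\tau_{l,m}+o(1))$, and likewise for $K_n^{(l,m)}(y,y)$ by continuity of $\omega$. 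Substituting, $\Delta=K_n(x,x)^2(1-S(u)^2)(1+o(1))$ and each $\Omega_{ij}$ factors as $K_n(x,x)(n\omega(x))^{l_i+l_j+2}$ times an explicit function of $u$ plus $o(1)$. Plugging into the formula for $\rho_2$ and subtracting $\rho_1(x)\rho_1(y)$, the leading $n$-powers balance out and the limit $\Xi(u)$ emerges as an explicit function in $S(u),S'(u),S''(u)$; for $u$ in a compact subset of $\C\setminus\{0\}$ the factor $1-S(u)^2$ does not vanish, so no singularity arises during the simplification.

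For (ii), the regime $u\in[-\eta,\eta]$ includes the diagonal $u=0$, where $\Delta\to 0$ and $\rho_2$ has an apparent $1/\sqrt{\Delta}$ singularity---this is the main obstacle. The strategy is to split into two zones. Outside a fixed small neighborhood of $u=0$, $1-S(u)^2\ge c>0$ so $\Delta\gtrsim n^2$, and Lemma~\ref{lemma:3.2} applied to each off-diagonal factor in $\Omega_{ij}$ directly gives $\rho_2=O(n^2)$. Near $u=0$, Taylor expand $S(u)=1-\pi^2u^2/6+O(u^4)$ to get $\Delta\sim c_1 u^2 K_n(x,x)^2$; parallel expansions of the derivatives $S^{(k)}(u)$ through the four $\Omega_{ij}$ yield $\Omega_{11}\Omega_{22}-\Omega_{12}^2\sim c_2 u^2 K_n(x,x)^2 (n\omega(x))^4$ with the matching leading power of $u$. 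The ratio $\sqrt{\Omega_{11}\Omega_{22}-\Omega_{12}^2}/\sqrt{\Delta}$ therefore remains of size $n^2$ uniformly as $u\to 0$, the $\Omega_{12}\arcsin(\cdot)$ term is handled analogously since $\arcsin$ stays bounded, and combining gives $\rho_2\le Cn^2$ uniformly. Together with $\rho_1\le Cn$ from \eqref{eqn:rho1}, this yields the claim.

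The truly nontrivial point, as indicated, is matching the order of the zero of $\Delta$ at $u=0$ with that of the apparent numerator of $\rho_2$. Heuristically this must happen because the rescaled process $\overline T_n$ admits a local Gaussian limit (the sinc kernel process) whose two-point zero density is finite even at coincident points, but converting this heuristic into an inequality requires bookkeeping the first few Taylor coefficients of $S$ together with the signs in the four $\Omega_{ij}$ entries so that competing terms cancel rather than reinforce. A minor technical point is uniformity in $x\in[a,b]$, which is automatic because $\omega$ and $\mu'$ are continuous and positive on that compact interval, so all normalizations appearing in Lemma~\ref{lemma:3.3}(b) are uniformly bounded from above and away from zero.
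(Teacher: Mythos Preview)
The paper does not give its own proof of Lemma~\ref{lemma:2.4}; it is quoted verbatim from \cite[Lemma~2.4]{LP}. So there is no in-paper argument to compare against, and I can only assess your proposal on its own terms.

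Your treatment of part (i) is sound and is exactly the route taken in \cite{LP}: substitute the local scaling from Lemma~\ref{lemma:3.3}(a),(b) into every ingredient of Lemma~\ref{lemma:2.2}, factor out the powers of $K_n(x,x)$ and $n\omega(x)$, and read off $\Xi(u)$; on a compact set avoiding $u=0$ the factor $1-S(u)^2$ stays away from zero and nothing goes wrong.

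Part (ii), however, has a genuine gap. What Lemma~\ref{lemma:3.3}(a) gives you is
\[
\frac{\Delta}{K_n(x,x)^2}=1-S(u)^2+\varepsilon_n(x,u),\qquad \varepsilon_n\to 0 \ \text{uniformly in }(x,u),
\]
and similarly for the other building blocks. The Taylor expansion $1-S(u)^2=\tfrac{\pi^2}{3}u^2+O(u^4)$ is a statement about the \emph{limiting} function only; for finite $n$ the additive error $\varepsilon_n$ may well dominate $u^2$ once $|u|$ is small compared with $\sqrt{|\varepsilon_n|}$. Thus the assertion $\Delta\sim c_1u^2K_n(x,x)^2$ is not justified uniformly in $u$ near $0$, and the parallel claim for $\Omega_{11}\Omega_{22}-\Omega_{12}^2$ fails for the same reason. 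Your own closing paragraph correctly flags this as the crux, but ``bookkeeping the first few Taylor coefficients of $S$'' cannot repair it, because the problem is not in $S$ but in the $o(1)$ discrepancy between $K_n$ and its sinc limit.

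What actually works (and is the standard device behind \cite[Lemma~2.4]{LP}) is to desingularize \emph{before} passing to the limit: replace the conditioning on $(P_n(x),P_n(y))=(0,0)$ by conditioning on $(P_n(x),\,(P_n(y)-P_n(x))/(y-x))=(0,0)$. The covariance of this new Gaussian pair has determinant $\Delta/(y-x)^2$, and as $y\to x$ it converges to $K_n(x,x)K_n^{(1,1)}(x,x)-K_n^{(0,1)}(x,x)^2$, which by Lemma~\ref{lemma:3.3}(b),(c) is bounded below by $cn^4$ uniformly in $x\in[a,b]$. The analogous divided differences applied to the $\Omega_{ij}$ remove the matching singularity in the numerator at the level of finite $n$, and then the uniform bounds from Lemmas~\ref{lemma:3.2}--\ref{lemma:3.3} give $\rho_2=O(n^2)$ directly, with no appeal to the small-$u$ Taylor expansion of $S$.
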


\section{Wiener Chaos decomposition}\label{sect:Wiener}

In this section, we outline the method to prove Theorem \ref{thm:CLT}. These ideas seem to be standard, but we include them here for completeness. Our approach follows \cite{ADL}, see also \cite{PT} for a survey of related background.

Let $B=(B_\la)_{\la \in [0,1]}$ be a standard Brownian motion defined on some probability space $(\Omega,\CF, \BP)$ where $\CF$ is generated by $B$.

The process $(\widetilde{T}_n), n=1,2,\dots $,  described in Section \ref{prep}, can be defined on the same probability space by
$$\widetilde{T}_n(t) :=  \int_{0}^1 \gamma_n(t,\la) dB_\la := \int_{0}^1 \left[\sum_{j=1}^n p_j(t/n) 1_{((j-1)/n, j/n)}(\la)\right] dB_\la ,$$
where   $1_{((j-1)/n, j/n)}(.)$ is the indicator function of the  interval $((j-1)/n, j/n)$.


Let $\BH$ be the Hilbert space $\CL^2([0,1], \mathcal{B},\la)$ with the standard Borel $\sigma$-algebra,  Lebesgue measure,  and inner product. The map
$$h\mapsto \BB(h):= \int_{0}^1 h(\la) dB_\la$$
is an isometry between $\BH$ and $\CL^2(\Omega, \CF, \BP)$, which is also called an isonormal process associated to $\BH$ (see \cite[Chapter 8]{PT}). Using this terminology we can write
\begin{align*}
\widetilde{T}_n(t) &= \BB(\gamma_n(t,.)), \\
\bar{T}_n(t) &= \BB(h_n(t,.)), \mbox{ and } \\
 \CT_n'(t) &= \BB(h'_n(t,.)),
\end{align*}
where
\begin{equation}\label{eqn:h,h'}
h_n(t,\la):= \frac{\gamma_n(t,\la)}{V_n(t)} \mbox{ and } h_n'(t,\la):= \frac{\partial_t h_n(t,\la)}{\|\partial_t h_n(t,.)\|_2}.
\end{equation}

{\bf Multiple Wiener-Ito Integrals.} Now let $H_q(x)$ be the Hermite polynomial of degree $q$, defined by
$$H_q(x) = (-1)^q e^{x^2/2} \frac{\partial^q}{\partial t^q} e^{-x^2/2}.$$
It is well-known that these polynomials form a complete orthogonal system in $\CL^2(\R, \phi(dx))$, where $\phi$ is the Gaussian density.

For each $q\ge 1$, let $I_q^B$ denote the Multiple  Wiener-Ito integral with respect to the Brownian process $B$, obtained by extending linearly from the identity
 \begin{equation}\label{I:B}
 I_q^B(h^{\tensor q}) = H_q(\BB(h))
 \end{equation}
 where
$$h^{\tensor q}(\la_1,\dots, \la_q) := \prod h(\la_k).$$

$I_q^B$ can be viewed  as a linear isometry between the symmetric tensor product over $\CL_s^2([0,1]^q)$ (equiped with the norm $\sqrt{q!} \|.\|_{L^2[0,1]^q}$) and $\CL^2(\Omega, \CF, \BP)$  (for details see for instance \cite[Proposition 8.1.2]{PT}).

{\bf Wiener Chaos.} Next, one has the following decomposition
$$L^2(\Omega, \CF, \BP) = \oplus_{q=0}^\infty \CH_q,$$
where $\CH_q$, the $q$-th Wiener Chaos, is defined as the image of the linear functional $I_q^B$ over $\BH=L^2([0,1], \BB, d\la)$, and $\CH_0$ is the set of constants. More precisely, for any $F \in L^2(\Omega, \CF, \BP)$ there exists a unique sequence $\{f_q, q\ge 1\}$ with $f_q\in \CL_s^2([0,1]^q)$ such that
$$F-\E F = \sum_{q=1}^\infty I_q^B(f_q) $$
 (see for instance \cite[Theorem 8.2.1]{PT}).  We note that the constant component has been removed.

{\bf Chaos decomposition for the number of zeros, via the Kac-Rice formula.}
To prove Theorem~ \ref{thm:CLT},  we will use the following decomposition.
\begin{theorem}\label{thm:decomposition} The following holds in $L^2$-sense
$$\frac{N([na,nb]) - \E N}{\sqrt{c_{a,b}n}} =\sum_{q=2}^\infty I_q^{\overline{T}_n}([na,nb]),$$
where
$$ I_q^{\overline{T}_n}([na,nb]) = \frac{1}{\sqrt{c_{a,b}n}} \int_{na}^{nb} f_q(\overline{T}_n(s) , \CT_n'(s)) v_n(s)ds$$
and
$$f_q(x,y) = \sum_{l=0}^{\lfloor q/2 \rfloor} b_{q-2l} a_{2l} H_{q-2l}(x) H_{2l}(y), a_{2l}=\sqrt{\frac{2}{\pi}} \frac{(-1)^{l+1}}{2^l l! (2l-1)};  b_0=1, b_{2k} = \frac{H_{2k}(0)}{\sqrt{2\pi}(2k)!}, \mbox{ and } b_{2k+1}=0, \forall k.$$
\end{theorem}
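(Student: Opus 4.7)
The plan is to adapt the Wiener chaos approach of Aza\"is--Dalmao--Le\'on \cite{ADL} to the rescaled process $\overline{T}_n$, replacing their stationary kernels by the bounds of Lemma \ref{lemma:bounds}. The starting point is the Kac identity: since $V_n(t)>0$ the zeros of $T_n$ and $\overline{T}_n$ coincide, and writing $\delta_\eps(x)=(2\eps)^{-1} \1_{\{|x|\le \eps\}}$ one has
\begin{equation*}
N([na,nb]) = \lim_{\eps\to 0^+} \int_{na}^{nb} \delta_\eps(\overline{T}_n(s))\,|\CT_n'(s)|\, v_n(s)\,ds \quad \text{in } L^2(\Omega),
\end{equation*}
via $\overline{T}_n'(s)=\CT_n'(s)\,v_n(s)$. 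The convergence is legitimate because the uniform lower bound $v_n(s)\ge c$ from Lemma \ref{lemma:bounds}(i) together with the explicit Gaussian density controls give uniform-in-$\eps$ $L^2$ bounds on the approximants, in the spirit of Lemma \ref{lemma:2.2}.

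Next comes the algebraic step, applied first to $\delta_\eps$. For each fixed $s$ the pair $(\overline{T}_n(s),\CT_n'(s))$ is centered jointly Gaussian with zero correlation by construction, hence a pair of independent $\CN(0,1)$'s. Expanding $\delta_\eps(x)|y|$ in Hermite polynomials and letting $\eps\to 0^+$ yields the formal decomposition
\begin{equation*}
\delta_0(x)\,|y| = \sum_{q\ge 0} f_q(x,y),\quad f_q(x,y)=\sum_{l=0}^{\lfloor q/2 \rfloor} b_{q-2l}\, a_{2l}\, H_{q-2l}(x)\, H_{2l}(y).
\end{equation*}
The kernels $h_n(s,\cdot)$ and $h_n'(s,\cdot)$ in \eqref{eqn:h,h'} are orthogonal unit vectors in $\BH$ (because $\overline{T}_n(s),\CT_n'(s)$ are independent standard Gaussians), so the product formula for multiple Wiener--It\^o integrals gives
\begin{equation*}
H_{q-2l}(\overline{T}_n(s))\,H_{2l}(\CT_n'(s)) = I^B_q\!\left(h_n(s,\cdot)^{\otimes(q-2l)} \,\widetilde{\otimes}\, h_n'(s,\cdot)^{\otimes 2l}\right),
\end{equation*}
placing $I_q^{\overline{T}_n}([na,nb])$ inside the $q$-th Wiener chaos $\CH_q$. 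The $q=0$ term recovers $\E N([na,nb])/\sqrt{c_{a,b}n}$ via the Kac--Rice first moment, and the $q=1$ term vanishes since the only candidate $(q-2l,2l)=(1,0)$ is killed by $b_1=0$; this is why the series starts at $q=2$.

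The main obstacle is the $L^2$ justification of commuting $\eps\to 0$ with integration and summation, and of proving that $\sum_{q\ge 2} I_q^{\overline{T}_n}$ converges in $L^2$. The key tool is the Fubini/diagonal identity
\begin{equation*}
\|I_q^{\overline{T}_n}([na,nb])\|_{L^2}^2 = \frac{1}{c_{a,b}n}\iint_{[na,nb]^2} \E\bigl[f_q(\overline{T}_n(s),\CT_n'(s)) f_q(\overline{T}_n(t),\CT_n'(t))\bigr]\,v_n(s)v_n(t)\,ds\,dt,
\end{equation*}
combined with Hermite orthogonality for correlated Gaussians, which expresses the inner expectation as a sum of monomials of total degree $q$ in $\overline{r}_n(s,t)$, $\tilde r'_n(s,t)$ and $\tilde r''_n(s,t)$. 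The decay estimates (ii)--(iv) in Lemma \ref{lemma:bounds} then bound each such monomial by a constant times $(1+|s-t|)^{-q}$, which for $q\ge 2$ integrates to a finite quantity uniformly in $n$, yielding absolute summability of $\sum_q \|I_q^{\overline{T}_n}\|_{L^2}^2$. A chaos-by-chaos identification against the abstract decomposition of $(N-\E N)/\sqrt{c_{a,b}n}$ inside $L^2(\Omega)$ then closes the proof.
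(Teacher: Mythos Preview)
Your proposal follows the same broad architecture as the paper---regularize the Kac counting formula, expand the integrand in Hermite polynomials, and identify each piece with a Wiener chaos component---so the strategy is sound. However, the $L^2$ justification you sketch in the last paragraph has a genuine gap.

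You claim that the decay bounds (ii)--(iv) of Lemma~\ref{lemma:bounds} give absolute summability of $\sum_{q\ge 2}\|I_q^{\overline T_n}\|_{L^2}^2$. They do not. When you expand $\E[f_q(\overline T_n(s),\CT_n'(s))f_q(\overline T_n(t),\CT_n'(t))]$ via the diagram/Mehler formula, you get a sum over $l,l'$ and over $\Bd\in\BD_{q,2l,2l'}$ of terms
\[
b_{q-2l}a_{2l}b_{q-2l'}a_{2l'}\,\frac{(q-2l)!(2l)!(q-2l')!(2l')!}{d_1!d_2!d_3!d_4!}\,\overline r_n^{\,d_1}(\tilde r_n')^{d_2}(\tilde r_n')^{d_3}(\tilde r_n'')^{d_4}.
\]
The correlation monomial is indeed $O\big((1+|s-t|)^{-q}\big)$, but the combinatorial prefactor is only controlled by $O(C^q)$ for some $C>1$ (this is exactly the content of Claim~\ref{claim:tau:large'} in the paper). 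After integrating and dividing by $n$ you obtain $\|I_q^{\overline T_n}\|_{L^2}^2=O(C^q/(q-1))$, which diverges when summed over $q$. So the decay estimates alone cannot close the argument; indeed, the paper devotes all of Section~\ref{sect:tail} to a much more delicate argument (splitting into diagonal/off-diagonal pieces and comparing to a limiting stationary process) just to handle the tail $\sum_{q\ge Q}\|I_q\|^2$ for the CLT.

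For the decomposition itself, the paper avoids this obstacle entirely by a different route: it first proves that the regularized count $N^\eta$ converges to $N$ in $L^2$ (Lemma~\ref{l.1}(i)), then uses the trivial bound $N\le n$ together with Fatou's lemma to conclude that $\sum_{q\le Q}\|I_q\|^2\le \E|N|^2<\infty$ for every $Q$, hence the series converges to some $L\in L^2$, and finally identifies $L=N$ by a projection argument. Two ingredients you glossed over with ``in the spirit of Lemma~\ref{lemma:2.2}'' are in fact nontrivial and are isolated as separate lemmas in the paper: one must show that $\overline T_n$ almost surely has no double root on $[na,nb]$ (Lemma~\ref{l.dr}), and that the number of critical points of $\overline T_n$ is square integrable (Lemma~\ref{lemma:N_X}, which is where part (v) of Lemma~\ref{lemma:bounds} is actually used). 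Without these, the passage from the mollified count to $N$ in $L^2$ is not justified.
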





A similar decomposition was obtained in \cite{ADL,AL}. For completeness, in Section \ref{sect:decomp} we will present a proof of Theorem~\ref{thm:decomposition}  using a similar argument.  We remark that the same proof also shows that the following $L^2$-decomposition still holds for any subinterval $I$ of $[na, nb]$ 
\begin{equation}\label{eqn:decomposition'}
N(I) - \E N(I) =\sum_{q=2}^\infty \int_I f_q(\overline{T}_n(s) , \CT_n'(s)) v_n(s)ds.
\end{equation}


{\bf Chaining.}
In what follows, we will rewrite $I_q^{\overline{T}_n}([na,nb]) $ in terms of the Brownian motion $B$ and show that it belongs to the $q$th chaos.

Recall that $\overline{T}_n(t) = \BB(h_n(t,.))$ and $\CT_n'(t) = \BB(h'_n(t,.))$, and hence
$$H_{q-2l}(\overline{T}_n(t)) H_{2l}(\CT_n'(t)) =H_{q-2l}(\BB(h_n(t,.))) H_{2l}( \BB(h'_n(t,.))).$$
As remarked previously, $\overline{T}_n(t)$ and $\CT_n'(t)$ are orthogonal (i.e. independent), and so $h_n(t,.)$ and $h'_n(t,.)$ are also orthogonal by isometry. It then follows from \cite[Equation (6.4.17)]{PT} and \eqref{I:B} that
$$H_{q-2l}(\BB(h_n(t,.))) H_{2l}( \BB(h'_n(t,.))) = I_q^B( h_n(t,.)^{\tensor (q-2l)} \tensor h_n'(t,.)^{\tensor 2l}).$$
By the stochastic Fubini's theorem, we can express $I_q^{\overline{T}_n}([na,nb])$ as
\begin{align*}
I_q^{\overline{T}_n}([na,nb]) & = \frac{1}{\sqrt{c_{a,b}n}} \int_{na}^{nb} \sum_{l=0}^{\lfloor q/2 \rfloor} b_{q-2l} a_{2l} H_{q-2l}(\BB(h_n(s,.))) H_{2l}( \BB(h'_n(s,.)))v_n(s)ds \\
&=\frac{1}{\sqrt{c_{a,b}n}} \int_{na}^{nb} \sum_{l=0}^{\lfloor q/2 \rfloor} b_{q-2l} a_{2l}  I_q^B( h_n(s,.)^{\tensor (q-2l)} \tensor h_n'(s,.)^{\tensor 2l}) v_n(s)ds\\
&= I_q^B(g_q^{(n)}),
\end{align*}
where
\begin{equation}\label{eqn:g}
g_q^{(n)}(\mathbf{\la}) = \frac{1}{\sqrt{c_{a,b}n}} \int_{na}^{nb}  \sum_{l=0}^{\lfloor q/2 \rfloor} b_{q-2l} a_{2l} [h_{n}(s,.)^{\tensor (q-2l)} \tensor h_n'(s,.)^{\tensor 2l}](\la) v_n(s) ds; \la=(\la_1,\dots, \la_q).
\end{equation}

{\bf Multidimensional CLT.} Last but not least,  to prove  Theorem \ref{thm:CLT} we will need the following CLT criterion extracted from \cite[Theorem 4]{ADL} and \cite[Theorem 11.8.3]{PT}. 
\begin{theorem}\label{thm:gau} Assume that
$$F_n = \sum_{q=1}^\infty I_q^B(f_q^{(n)})$$
such that
\begin{equation}\label{eqn:single}
\lim_{n\to \infty} \E (I_{q}^B (f_q^{(n)})^2) = \sigma_q^2<\infty;
\end{equation}
\begin{equation}\label{eqn:finiteness}
\sigma^2 = \sum_q \sigma_q^2 <\infty;
\end{equation}
and for each $q\ge 1$ and $p =0,\dots, q-1$ we have
\begin{equation}\label{eqn:cross}
\lim_{n \to \infty} \|f_q^{(n)} \tensor_p f_q^{(n)}\|_{L^2([0,1]^{2q-2p})} =0,
\end{equation}
\begin{equation}\label{eqn:tail}
\lim_{Q \to \infty} \limsup_{n \to \infty}\sum_{q\ge Q}  \E (I_{q}^B (f_q^{(n)})^2)=0.
\end{equation}
Then $F_n$ converges to $\BN(0, \sigma^2)$ in distribution as $n\to \infty$.
\end{theorem}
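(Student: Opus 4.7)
The plan is to prove this by truncation at a fixed chaos level $Q$, invoke the Peccati-Tudor multidimensional Fourth Moment Theorem for the truncated sum, and then pass $Q \to \infty$ using the tail hypothesis \eqref{eqn:tail}. First I would fix $Q \ge 1$ and consider the partial sum $F_n^{(Q)} = \sum_{q=1}^Q I_q^B(f_q^{(n)})$. Condition \eqref{eqn:single} identifies the limiting variance of each component, while the contraction condition \eqref{eqn:cross} is precisely the hypothesis of the Nualart-Peccati Fourth Moment Theorem, which ensures that each $I_q^B(f_q^{(n)})$ converges in distribution to $\BN(0,\sigma_q^2)$.

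Second, I would invoke the Peccati-Tudor multidimensional extension (see \cite[Theorem 11.8.3]{PT}): when each coordinate of a vector of multiple Wiener-Ito integrals of distinct orders obeys the fourth-moment criterion and the covariance matrix converges, the vector converges jointly to a centered Gaussian. Because integrals of different orders in the Wiener chaos decomposition are orthogonal in $L^2(\Omega,\CF,\BP)$, the limiting components are independent, and hence
$$F_n^{(Q)}=\sum_{q=1}^{Q} I_q^B(f_q^{(n)}) \xrightarrow{d} \BN\Big(0,\sum_{q=1}^{Q}\sigma_q^2\Big).$$

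Third, to conclude convergence of the full sum $F_n$, I would control the tail via \eqref{eqn:tail}. Because distinct Wiener chaoses are orthogonal, $\E[(F_n-F_n^{(Q)})^2] = \sum_{q>Q} \E[I_q^B(f_q^{(n)})^2]$, which by \eqref{eqn:tail} can be made arbitrarily small uniformly in large $n$ by choosing $Q$ large. A standard $\epsilon/3$ argument (comparing characteristic functions, or using a Wasserstein/Kolmogorov metric) combined with the finiteness of $\sigma^2=\sum_q\sigma_q^2$ from \eqref{eqn:finiteness} yields $F_n \xrightarrow{d} \BN(0,\sigma^2)$.

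The main obstacle is the fourth-moment step itself: proving that vanishing contractions imply asymptotic normality for a multiple Wiener-Ito integral is a deep result from Malliavin-Stein calculus, and the joint Peccati-Tudor extension additionally exploits the orthogonality of Wiener chaoses of different orders. I would treat these as black-box inputs rather than reprove them. With these in hand, the remaining work -- the truncation, the second-moment identity for the $L^2$ error between $F_n$ and $F_n^{(Q)}$, and the passage to the limit in $Q$ -- is routine and requires only the hypotheses \eqref{eqn:finiteness} and \eqref{eqn:tail} as stated.
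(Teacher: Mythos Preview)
Your proposal is essentially correct and follows the standard route to this type of result. You should note, however, that the paper does \emph{not} supply its own proof of Theorem~\ref{thm:gau}: it is quoted as a known criterion ``extracted from \cite[Theorem~4]{ADL} and \cite[Theorem~11.8.3]{PT}'' and used as a black box. So there is no ``paper's own proof'' to compare against; your sketch is precisely the argument one would give to justify the cited result, and it matches the reasoning behind \cite[Theorem~11.8.3]{PT} (truncate, apply Peccati--Tudor/Nualart--Peccati to the finite vector of chaos components, then control the tail in $L^2$ via orthogonality of distinct chaoses).

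One small point worth flagging: the statement as written in the paper asks for \eqref{eqn:cross} over $p=0,\dots,q-1$, but for $p=0$ one has $\|f_q^{(n)}\tensor_0 f_q^{(n)}\|_{L^2}=\|f_q^{(n)}\|_{L^2}^2$, whose vanishing would force $\sigma_q=0$. The Fourth Moment Theorem only requires $p=1,\dots,q-1$, and indeed the paper itself only verifies this range in Proposition~\ref{prop:cov:zero}. So when you write that \eqref{eqn:cross} ``is precisely the hypothesis of the Nualart--Peccati Fourth Moment Theorem,'' you are implicitly (and correctly) reading the condition with $p\ge 1$; it would be worth making that explicit.
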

We recall (see e.g. \cite{PT})  that for $0\le p\le q$ the $p$th contraction $ \tensor_p$ of functions  in the $q$th chaos can be computed using
\begin{align}
\label{eq:tensor}
\nonumber	
&(f_q^{(n)} \tensor_p f_q^{(n)} )(x_1,\dots, x_{q-p}, y_1,\dots, y_{q-p}) \\
&= \int_{[0,1]^p} f_q^{(n)}( z_1,\dots, z_p, x_1,\dots, x_{q-p}) f_q^{(n)}(z_1,\dots, z_p,y_1,\dots, y_{q-p})dz_1\dots dz_p.
\end{align}
In particular, in the special case $p=0$ this is simply the tensor product
$$f_q^{(n)} \tensor_0 f_q^{(n)} (x_1,\dots, x_{q}, y_1,\dots, y_{q}) =f_q^{(n)}(x_1,\dots, x_{q}) f_q^{(n)}(y_1,\dots, y_{q}).$$



\section{Asymptotic decay of the contractions: Verification of Condition \eqref{eqn:cross}}\label{section:cross}
Using Theorem~\ref{thm:decomposition}, Theorem~\ref{thm:gau},  to show Theorem~\ref{thm:CLT} we need to verify the conditions in Theorem~\ref{thm:gau}.  As we'll see,  thanks to Theorem \ref{thm:var},   one of the main tasks is to  show that the contractions have vanishing norms in the limit (see Proposition \ref{prop:cov:zero} below),  and this is the main goal of the current section. 

Recall the definitions of $h_n(t,\lambda)$ and $h'_n(t,\lambda)$ from \eqref{eqn:h,h'}.  We first note that

\begin{claim}\label{claim:h} For $p\le i$, we have
$$h_n(s,.)^{\tensor i} \tensor_p h_n(t,.)^{\tensor i} = \overline{r}_n(s,t)^p h_n(s,.)^{\tensor i-p} \tensor_0 h_n(t,.)^{\tensor i-p},$$
$$h_n(s,.)^{\tensor i} \tensor_p h_n'(t,.)^{\tensor i} = \tilde{r}_n'(s,t)^p h_n(s,.)^{\tensor i-p} \tensor_0 h_n'(t,.)^{\tensor i-p},$$
and
$$h_n'(s,.)^{\tensor i} \tensor_p h_n'(t,.)^{\tensor i} = \tilde{r}_n''(s,t)^p h_n(s,.)^{\tensor i-p} \tensor_0 h_n'(t,.)^{\tensor i-p}.$$

\end{claim}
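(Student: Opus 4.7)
The plan is to observe that definition \eqref{eq:tensor} of the $p$-th contraction, when applied to symmetric rank-one tensor powers, collapses to the $p$-th power of an $L^2$ inner product times a residual tensor product. Concretely, for any $f,g \in L^2([0,1])$, substituting the factored forms $f^{\tensor i}(u_1,\dots,u_i)=\prod_k f(u_k)$ and $g^{\tensor i}(u_1,\dots,u_i)=\prod_k g(u_k)$ into \eqref{eq:tensor} and separating the $p$-fold $z$-integral from the remaining $x,y$ variables yields
\[
(f^{\tensor i}\tensor_p g^{\tensor i})(x_1,\dots,x_{i-p},y_1,\dots,y_{i-p})
= \left(\int_0^1 f(z)g(z)\,dz\right)^{p}\, f^{\tensor (i-p)}(x_1,\dots,x_{i-p})\, g^{\tensor (i-p)}(y_1,\dots,y_{i-p}),
\]
which is the entire content of the claim once the three inner products are identified.

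The three asserted identities then follow by taking in turn $(f,g)=(h_n(s,\cdot),h_n(t,\cdot))$, $(h_n(s,\cdot),h_n'(t,\cdot))$, and $(h_n'(s,\cdot),h_n'(t,\cdot))$. Using the representations $\overline{T}_n(t)=\BB(h_n(t,\cdot))$ and $\CT_n'(t)=\BB(h_n'(t,\cdot))$ from Section~\ref{sect:Wiener}, together with the It\^o isometry $\E[\BB(f)\BB(g)]=\langle f,g\rangle_{L^2([0,1])}$, the corresponding $L^2$ inner products are identified with $\overline{r}_n(s,t)$, $\tilde r'_n(s,t)$, and $\tilde r''_n(s,t)$ respectively, directly by the definitions \eqref{eqn:rb}, \eqref{def:tilde:r'}, and \eqref{def:tilde:r''}.

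I expect no real obstacle: the argument is a one-line Fubini computation together with a notational translation between Brownian inner products and the three covariance functions introduced in Section~\ref{prep}. The only mild subtlety is that \eqref{eq:tensor} is stated for two copies of the \emph{same} function, whereas here it is applied to two distinct tensors $f^{\tensor i}$ and $g^{\tensor i}$; this extension is immediate from the standard multilinear definition of $\tensor_p$ and will be invoked without further comment.
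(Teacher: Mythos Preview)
Your approach is correct and essentially identical to the paper's: both expand the $p$-contraction via \eqref{eq:tensor}, factor the integrand into a $p$-fold product of one-variable integrals times the residual tensor, and then identify those one-variable integrals as $\overline{r}_n$, $\tilde r'_n$, $\tilde r''_n$ respectively. Your only cosmetic difference is that you state the computation once for general $f,g\in L^2([0,1])$ before specializing, whereas the paper carries out the first case explicitly and then says ``similarly'' for the other two.
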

\begin{proof}  For the first identity,  letting $x=(x_1,\dots,x_p)$ we have
\begin{align*}
&h_n(s,.)^{\tensor i} \tensor_p h_n(t,.)^{\tensor i} (\la_1,\dots, \la_{i-p}, \la'_1,\dots, \la'_{i-p})\\
& = \int_{x_1,\dots,x_p}  h_n(s,.)^{\tensor i}(x_1,\dots,x_p, \la_1,\dots, \la_{i-p}) h_n(t,.)^{\tensor i}(x_1,\dots,x_p, \la'_1,\dots, \la'_{i-p}) dx\quad\text{by \eqref{eq:tensor}}\\
&= \Big(\int_{x_1,\dots,x_p}  h_n(s,x_1) \times \dots \times h_n(s,x_p)h_n(t,x_1) \times \dots \times h_n(t,x_p)   dx\Big) \times \\
&\times h_n(s,.)^{\tensor i-p}(\la_1,\dots, \la_{i-p}) h_n(t,.)^{\tensor i-p}(\la_1',\dots, \la_{i-p}') \\
&=  \left (\int_0^1  \frac{\gamma_n(s,\la)}{V_n(s)}  \frac{\gamma_n(t,\la)}{V_n(t)} d\la\right )^p \times   h_n(s,.)^{\tensor i-p}(\la_1,\dots, \la_{i-p})  h_n(t,.)^{\tensor i-p}(\la_1',\dots, \la_{i-p}')   \quad\text{by \eqref{eqn:h,h'}}\\
&=    \overline{r}_n(s,t)^p  \times h_n(s,.)^{\tensor i-p}(\la_1,\dots, \la_{i-p})  h_n(t,.)^{\tensor i-p}(\la_1',\dots, \la_{i-p}').
\end{align*}
Similarly,  to obtain the second and the third identities we   use the fact that
$$\int_0^1 h_n(s,x) h_n'(t,x)dx = \widetilde{r}'_n(s,t)$$
and
$$\int_0^1 h_n'(s,x) h_n'(t,x)dx = \widetilde{r}''_n(s,t).$$
\end{proof}

\begin{claim}\label{claim:tensor} For $0\le k,k'\le i$, let
$$T_{n, i, k, k'}(s, t):=\int h_n(s,.)^{\tensor i-k} \tensor_0 h_n'(s,.)^{\tensor k}(\la_1,\dots, \la_i) h_n(t,.)^{\tensor i-k'} \tensor_0 h_n'(t,.)^{\tensor k'}(\la_1,\dots, \la_i) d\la_1\dots \la_i.$$

Then
$$T_{n,i, k, k'}(s, t)=\begin{cases}
 \overline{r}_n(s,t)^{i-k}  (\widetilde{r}'_n(t, s))^{k- k' }  (\widetilde{r}''_n(s,t))^{k'} \text{ if } k\ge k',\\
 \overline{r}_n(s,t)^{i-k'}  (\widetilde{r}'_n(s,t))^{k'- k}  (\widetilde{r}''_n(s,t))^{k} \text{ if } k<k'.
\end{cases} $$

\end{claim}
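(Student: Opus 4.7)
The plan is to prove Claim~\ref{claim:tensor} by direct computation, exploiting the fact that the tensor product $\tensor_0$ puts the two factor functions on disjoint groups of coordinates, so the integrand is a product of $i$ one-variable factors and the integral over $[0,1]^i$ is the corresponding product of $i$ one-variable integrals.

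First, unpack the definition: by the convention of $\tensor_0$ used in Section~\ref{sect:Wiener} (see the display following \eqref{eq:tensor}),
$$h_n(s,.)^{\tensor i-k} \tensor_0 h_n'(s,.)^{\tensor k}(\la_1,\dots,\la_i) = \prod_{j=1}^{i-k} h_n(s,\la_j)\prod_{j=i-k+1}^{i} h'_n(s,\la_j),$$
and similarly for the $t$ copy with $k'$ in place of $k$. Multiplying these together, the integrand of $T_{n,i,k,k'}$ factors as $\prod_{j=1}^{i} \phi_j(\la_j)$, where each $\phi_j$ is a product of two functions, one from the $s$-bundle and one from the $t$-bundle. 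Hence Fubini reduces $T_{n,i,k,k'}(s,t)$ to a product of $i$ integrals of the form $\int_0^1 \phi_j(\la)\,d\la$, each of which is one of $\overline{r}_n(s,t)$, $\widetilde{r}'_n(s,t)$, $\widetilde{r}'_n(t,s)$, or $\widetilde{r}''_n(s,t)$, by the identities
$$\int_0^1 h_n(s,\la)h_n(t,\la)\,d\la=\overline{r}_n(s,t),\quad \int_0^1 h_n(s,\la)h'_n(t,\la)\,d\la=\widetilde{r}'_n(s,t),\quad \int_0^1 h'_n(s,\la)h'_n(t,\la)\,d\la=\widetilde{r}''_n(s,t),$$
established in the proof of Claim~\ref{claim:h}.

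It then remains to count, for each $j \in \{1,\dots,i\}$, which of $\{h_n(s,\cdot),h'_n(s,\cdot)\}$ and $\{h_n(t,\cdot),h'_n(t,\cdot)\}$ multiplies the other at coordinate $j$. Split into two cases according to the relative size of $k$ and $k'$. If $k\ge k'$: for $j\in\{1,\dots,i-k\}$ both factors are undifferentiated, contributing $\overline{r}_n(s,t)^{i-k}$; for $j\in\{i-k+1,\dots,i-k'\}$ the $s$-factor is $h'_n$ and the $t$-factor is $h_n$, contributing $\widetilde{r}'_n(t,s)^{k-k'}$; for $j\in\{i-k'+1,\dots,i\}$ both factors are derivatives, contributing $\widetilde{r}''_n(s,t)^{k'}$. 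The case $k<k'$ is symmetric (with the roles of $s$ and $t$ in the middle block reversed), giving $\overline{r}_n(s,t)^{i-k'}\widetilde{r}'_n(s,t)^{k'-k}\widetilde{r}''_n(s,t)^{k}$. This matches the two branches of the claimed formula.

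There is no real obstacle here; the only thing to be careful about is the bookkeeping of which variable sits in the first vs the second argument of $\widetilde{r}'_n$, since in general $\widetilde{r}'_n(s,t)\neq \widetilde{r}'_n(t,s)$ (it is a covariance of $\overline{T}_n(s)$ and $\CT'_n(t)$, see \eqref{def:tilde:r'}). The case split $k\ge k'$ vs $k<k'$ is precisely what is needed to record this asymmetry correctly.
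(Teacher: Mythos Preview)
Your proof is correct and follows essentially the same route as the paper's: both arguments unpack the tensor product into a product of one-variable factors, apply Fubini, and then count which of the three inner products $\overline{r}_n$, $\widetilde{r}'_n$, $\widetilde{r}''_n$ appears in each coordinate block according to whether $k\ge k'$ or $k<k'$. The paper handles only the case $k\ge k'$ and appeals to symmetry; you write out both cases explicitly, but the content is identical.
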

\begin{proof} Without loss of generality, assume that $k\ge k'$. Then
\begin{align*}
&\int_{\la_1,\dots,\la_i} h_n(s,.)^{\tensor i-k} \tensor_0 h_n'(s,.)^{\tensor k}(\la_1,\dots, \la_i) h_n(t,.)^{\tensor i-k'} \tensor_0 h_n'(t,.)^{\tensor k'}(\la_1,\dots, \la_i) d\la_1\dots \la_i\\
& = \int_{\la_1,\dots, \la_{i-k}} h_n(s,\la_1) \dots h_n(s,\la_{i-k}) \times  h_n(t,\la_1) \dots h_n(t,\la_{i-k})  d\la_1\dots \la_{i-k} \\
& \times \int_{\la_{i-k+1},\dots, \la_{i-k'}} h_n'(s,\la_{i-k+1}) \dots h_n'(s,\la_{i-k'}) \times   h_n(t,\la_{i-k+1}) \dots h_n(t,\la_{i-k'}) d\la_{i-k+1} \dots d\la_{i-k'}\\
& \times \int_{\la_{i-k'+1},\dots, \la_{i}} h_n'(s,\la_{i-k'+1}) \dots h_n'(s,\la_{i}) \times   h_n'(t,\la_{i-k'+1}) \dots h_t'(s,\la_{i}) d\la_{i-k'+1} \dots d\la_{i}\\
&=  \overline{r}_n(s, t)^{i-k}  (\widetilde{r}'_n(t, s))^{k-k'}  (\widetilde{r}''_n(s,t))^{k'}.
\end{align*}
This completes the proof.
\end{proof}

In what follows we verify Condition \eqref{eqn:cross}.  
\begin{proposition}\label{prop:cov:zero} For each $g_q^{(n)}$ from \eqref{eqn:g}, with $q\ge 1$ we have
$$\lim_{n \to \infty} \|g_q^{(n)} \tensor_p g_q^{(n)} \|_{L^2([0,1]^{2q-2p})}^2=0 \mbox{ for each } p=1,\dots, q-1.$$
\end{proposition}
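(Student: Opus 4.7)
\textbf{Proof plan for Proposition \ref{prop:cov:zero}.}

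The strategy is to expand $\|g_q^{(n)}\tensor_p g_q^{(n)}\|_{L^2}^2$ as a quadruple integral over $(s_1,t_1,s_2,t_2)\in[na,nb]^4$, reduce the integrand to products of correlation factors using Claims \ref{claim:h} and \ref{claim:tensor}, apply the pointwise bounds of Lemma \ref{lemma:bounds}, and finally estimate the resulting integral of rational factors after a change of variables.

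Write
$$F_q(s,\cdot) := \sum_{l=0}^{\lfloor q/2\rfloor} b_{q-2l}\,a_{2l}\, h_n(s,\cdot)^{\tensor(q-2l)}\tensor h_n'(s,\cdot)^{\tensor 2l},$$
so that \eqref{eqn:g} becomes $g_q^{(n)} = \frac{1}{\sqrt{c_{a,b}n}}\int_{na}^{nb} F_q(s,\cdot)\,v_n(s)\,ds$. Consequently
$$\|g_q^{(n)}\tensor_p g_q^{(n)}\|^2 = \frac{1}{(c_{a,b}n)^2}\iiiint_{[na,nb]^4}\!\! \bigl\langle F_q(s_1,\cdot)\tensor_p F_q(t_1,\cdot),\, F_q(s_2,\cdot)\tensor_p F_q(t_2,\cdot)\bigr\rangle\, v_n(s_1)v_n(t_1)v_n(s_2)v_n(t_2)\, ds_1 dt_1 ds_2 dt_2.$$
Applying Claim \ref{claim:h} (together with its natural analogues for mixed contractions of $h_n$ with $h_n'$, producing factors of $\widetilde r_n'$ or $\widetilde r_n''$ in place of $\overline r_n$) to each of the two $\tensor_p$ contractions, and then Claim \ref{claim:tensor} to the complete contraction on the remaining $2(q-p)$ slots coming from the $L^2$-inner product, the bracket above becomes a finite sum whose each summand has the structure: $p$ correlation factors involving $(s_1,t_1)$, $p$ factors involving $(s_2,t_2)$, $q-p$ factors involving $(s_1,s_2)$, and $q-p$ factors involving $(t_1,t_2)$, where each individual factor is one of $\overline r_n,\widetilde r_n',\widetilde r_n''$.

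By Lemma \ref{lemma:bounds}(i)--(iv), every such correlation factor is bounded by $C/(|\cdot-\cdot|+1)$ and $v_n$ is uniformly bounded, hence
$$\|g_q^{(n)}\tensor_p g_q^{(n)}\|^2 \le \frac{C_q}{n^2}\iiiint_{[na,nb]^4} \frac{ds_1\,dt_1\,ds_2\,dt_2}{(|s_1-t_1|+1)^p(|s_2-t_2|+1)^p(|s_1-s_2|+1)^{q-p}(|t_1-t_2|+1)^{q-p}}.$$
Substituting $u=t_1-s_1$, $v=s_2-s_1$, $w=t_2-s_1$ with $s_1$ free, the $s_1$-integration contributes a factor $O(n)$, and the pair-distances become $|u|,|v-w|,|v|,|u-w|$. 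Since $1\le p\le q-1$, both $p$ and $q-p$ are at least $1$ and $q\ge 2$. Iteratively applying the standard convolution estimate
$$\int_\R \frac{dx}{(|x-a|+1)^\alpha (|x-b|+1)^\beta} \;\le\; \frac{C_{\alpha,\beta}\,\log(|a-b|+2)}{(|a-b|+1)^{\min(\alpha,\beta)}}\quad (\alpha,\beta\ge 1)$$
(first to the $w$-integral, then to $v$, then to $u$), one obtains the uniform bound $J_n=O((\log n)^A)$ for some $A=A(q)$ on the remaining triple integral. Combining everything,
$$\|g_q^{(n)}\tensor_p g_q^{(n)}\|^2 \le \frac{C_q(\log n)^A}{n} = o(1)\qquad (n\to\infty).$$

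The main obstacle will be the combinatorial bookkeeping in the expansion step (keeping track of which of $\overline r_n,\widetilde r_n',\widetilde r_n''$ appears in each slot and with which pair of arguments), together with the case analysis for the convolution estimates in the boundary cases $p=1$ or $q-p=1$, where the intermediate convolutions are only polylogarithmic rather than bounded. The hypothesis $1\le p\le q-1$ is precisely what ensures both exponents are at least $1$, so any polylogarithmic growth of $J_n$ is dominated by the $1/n$ prefactor, yielding the desired vanishing.
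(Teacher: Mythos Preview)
Your plan is correct and follows essentially the same route as the paper: expand the contraction norm via Claims \ref{claim:h} and \ref{claim:tensor} into a quadruple integral over $[na,nb]^4$ whose integrand is a product of $2q$ correlation factors grouped as $p$ factors in $(s_1,t_1)$, $p$ in $(s_2,t_2)$, $q-p$ in $(s_1,s_2)$, and $q-p$ in $(t_1,t_2)$, then apply the $C/(|\cdot-\cdot|+1)$ bounds of Lemma \ref{lemma:bounds}. The only difference is the endgame estimate on the resulting integral: the paper keeps just one factor from each of the four groups (bounding the rest by $O(1)$), then drops the $(s_2,t_2)$ factor entirely and changes variables along the chain $s_1\!-\!t_1$, $t_1\!-\!t_2$, $t_2\!-\!s_2$ to get $O(n\log^3 n)=o(n^2)$ directly; you instead retain all powers and use iterated one-dimensional convolution bounds to reach $O(n(\log n)^A)$. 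Both arguments are valid; the paper's is slightly more elementary, while yours would yield a marginally sharper bound but requires tracking the extra logarithmic factors through the successive convolutions (which your stated convolution lemma does not literally cover after the first step, though the modification is routine).
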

Let $q\ge 1$ and $0\le p\le q-1$.  Denoting $x=(x_1,\dots,x_p)$,  $\la=(\la_1,\dots,\la_{q-p})$ and $\la'=(\la'_1,\dots,\la'_{q-p})$,  we have
\begin{eqnarray*}
	&&\Big[g_q^{(n)} \tensor_p g_q^{(n)}\Big] (\la, \la') = \int_{x\in [0,1]^p}  g_q^{(n)} (x,\la) g_q^{(n)} (x,\la') dx\\
	&=&    \frac{1}{c_{a,b}n} \int_{[na,nb]}\int_{[na,nb]}   v_n(s)v_n(t)\sum_{l,l'=0}^{q/2} b_{q-2l} a_{2l} b_{q-2l'} a_{2l'} \times \\
&&\times \int_{x\in [0,1]^p}  \Big[h_n(s,.)^{\tensor (q-2l)} \tensor_0 h'_n(s,.)^{\tensor 2l}\Big](x,\la)\times    \Big[h_n(t,.)^{\tensor (q-2l')} \tensor_0 h'_n(t,.)^{\tensor 2l'}\Big](x,\la') 
 dx    dsdt
\end{eqnarray*}
We note that if $q-2l > p$ then
\begin{eqnarray*}
&&\Big[h_n(s,.)^{\tensor (q-2l)} \tensor_0 h_n'(s,.)^{\tensor 2l}\Big](x,\lambda) \\ 
&=& h_n(s,.)^{\tensor p}(x)  \times  h_n(s,.)^{\tensor(q-2l-p)}(\lambda_1,\dots,\lambda_{q-p-2l}) \times h'_n(s,.)^{\tensor (2l)}(\la_{q-p-2l+1},\dots,\la_{q-p})
\end{eqnarray*}
and if $q-2l \le p$ then
\begin{eqnarray*}
&&\Big[h_n(s,.)^{\tensor (q-2l)} \tensor_0 h_n'(s,.)^{\tensor 2l}\Big](x,\lambda) \\ 
&=& h_n(s,.)^{\tensor (q-2l)}(x_1,\dots,x_{q-2l}) \times h'_n(s,.)^{\tensor(2l)}(x_{q-2l+1},\dots,x_p,\lambda_1,\dots,\lambda_{q-p}) 
\end{eqnarray*}

Consequently,  using Claim~\ref{claim:h} we obtain
\begin{eqnarray*}
&&\int_{x\in [0,1]^p} \Big[h_n(s,.)^{\tensor (q-2l)} \tensor_0 h'_n(s,.)^{\tensor 2l}\Big](x,\la)   \times   \Big[h_n(t,.)^{\tensor (q'-2l')} \tensor_0 h'_n(t,.)^{\tensor 2l'}\Big] (x,  \la') dx\\
&=&   R_{n,c,d}(s,t)^p   h_{n}(s,.)^{\tensor c} \tensor_0 h_n'(s,.)^{\tensor (q-p-c)}(\la)  \times h_{n}(t,.)^{\tensor d} \tensor_0 h_n'(t,.)^{\tensor (q-p-d)}(\la')
\end{eqnarray*}
where $c=c(q,p,l)=\max(q-2l-p,0)$ and $d=d(q,p,l)=\max(q-2l'-p,0)$, and 
$$R_{n,c,d}(s,t)=\begin{cases}\bar{r}_n(s,t), & \text{if $c,d>0$;}\\
\tilde{r}'_n(s,t), & \text{if $c>0$ and $d=0$;}\\
\tilde{r}'_n(t,s), & \text{if $c=0$ and $d>0$;}\\
\tilde{r}''_n(s,t), & \text{if $c=d=0$.}
\end{cases}
$$


Hence
\begin{align*}
&\|g_q^{(n)} \tensor_p g_q^{(n)}\|_2^2 =    \frac{1}{(c_{a,b} n)^{2}}  \int_{ [na,nb]^4 }  v_n(s) v_n(t) v_n(s') v_n(t')  \\
 &\times \sum_{l,l', k, k'=0}^{q/2} \Big(b_{q-2l} a_{2l}  b_{q-2l'} a_{2l'}  b_{q-2k} a_{2k}  b_{q-2k'} a_{2k'} (R_{n,c,d}(s,t))^p  R_{n,c',d'}(s',t'))^p\times \\
&\times T_{n,p,q,l,l',k,k'}(s,t,s',t') \, dsdtds'dt'\Big),
\end{align*}
where
\begin{align*}
& T_{n,p,q,l,l',k,k'}(s,t,s',t')\\
& =\int_{\la_1,\dots, \la_{q-p}, \la_1',\dots, \la_{q-p}'} h_{n}(s,.)^{\tensor c} \tensor_0 h_n'(s,.)^{\tensor (q-p-c)}(\la_1,\dots, \la_{q-p}) h_{n}(t,.)^{\tensor d} \tensor_0 h_n'(t,.)^{\tensor (q-p-d)}(\la_1',\dots, \la_{q-p}') \\
& \times h_{n}(s',.)^{\tensor c'} \tensor_0 h_n'(s',.)^{\tensor (q-p-c')}(\la_1,\dots, \la_{q-p}) h_{n}(t',.)^{\tensor d'} \tensor_0 h_n'(t',.)^{\tensor (q-p-d')}(\la_1',\dots, \la_{q-p}') d\la_1 \dots d\la_{q-p}'\\
&= \int_{\la_1,\dots, \la_{q-p}} \bigg[h_{n}(s,.)^{\tensor c} \tensor_0 h_n'(s,.)^{\tensor (q-p-c)}(\la_1,\dots, \la_{q-p}) \nonumber\\
&\qquad \qquad\qquad\times h_{n}(s',.)^{\tensor d} \tensor_0 h_n'(s',.)^{\tensor (q-p-d)}(\la_1,\dots, \la_{q-p})\bigg] d\la_1 \dots d\la_{q-p}\\
&\times \int_{\la_1',\dots, \la_{q-p}'} \bigg[ h_{n}(t,.)^{\tensor c'} \tensor_0 h_n'(t,.)^{\tensor q-p-c'}(\la_1',\dots, \la_{q-p}') \\
&\qquad\qquad \qquad \times h_{n}(t',.)^{\tensor d'} \tensor_0 h_n'(t',.)^{\tensor (q-p-d')}(\la_1',\dots, \la_{q-p}') \bigg]d\la_1' \dots d\la_{q-p}'\\
&= T_{n,q-p, q-p-c, q-p-d}(s, s') T_{n,q-p, q-p-c', q-p-d'}(t, t'),
\end{align*}
where the $T$ terms are defined in Claim \ref{claim:tensor}.
Hence, by using Claim \ref{claim:tensor} and noting that all the coefficients have order $O_q(1)$, we have the following

\begin{claim}\label{claim:Spq} We can bound $(c_{a,b}n)^2\|g_q \tensor_p g_q\|_2^2$ by
\begin{align*}
&(c_{a,b}n)^2\|g_q^{(n)} \tensor_p g_q^{(n)}\|_2^2 =O_q\Big( \max_{i_1+i_2+i_3=q-p,i_1'+i_2'+i_3'=q-p}\int_{[na,nb]^4}v_n(s) v_n(t) v_n(s') v_n(t')\times\\
&\times \max (|\bar{r}_{n}(s,t)|,|\tilde{r}'_{n}(s,t)|,|\tilde{r}'_{n}(t,s)|,|\tilde{r}''_{n}(s,t)|)^p \max (|\bar{r}_{n}(s',t')|,|\tilde{r}'_{n}(s',t')|,|\tilde{r}'_{n}(t',s')|,|\tilde{r}''_{n}(s',t')|)^p \times \\
&\times  [|\bar{r}_n(s, s')|^{i_1} \max\{|\tilde{r}'_n(s, s')|, |\tilde{r}'_n(s', s)|\}^{i_2} |\tilde{r}''_n(s, s')|^{i_3}] \times  [|\bar{r}_n(t,t')|^{i_1'} \max\{|\tilde{r}'_n(t, t')|, |\tilde{r}'_n(t', t)|\}^{i_2'} |\tilde{r}''_n(t,t')|^{i_3'}]dsdtds'dt'\Big).
\end{align*}
\end{claim}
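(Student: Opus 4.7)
The plan is to substitute the factorization
$$T_{n,p,q,l,l',k,k'}(s,t,s',t') = T_{n,q-p,\,q-p-c,\,q-p-d}(s,s') \cdot T_{n,q-p,\,q-p-c',\,q-p-d'}(t,t'),$$
established immediately above the claim, into the identity already derived for $\|g_q^{(n)} \tensor_p g_q^{(n)}\|_2^2$, and then apply Claim~\ref{claim:tensor} to each of the two $T$ factors. By that claim, each such factor equals a monomial in $\bar{r}_n, \tilde{r}'_n, \tilde{r}''_n$ whose exponents are nonnegative integers summing to $q-p$; more precisely, $T_{n,q-p,q-p-c,q-p-d}(s,s')$ is either $\bar{r}_n(s,s')^{i_1}\tilde{r}'_n(s',s)^{i_2}\tilde{r}''_n(s,s')^{i_3}$ or $\bar{r}_n(s,s')^{i_1}\tilde{r}'_n(s,s')^{i_2}\tilde{r}''_n(s,s')^{i_3}$ for some triple $(i_1,i_2,i_3)$ with $i_1+i_2+i_3=q-p$, depending on which of $q-p-c, q-p-d$ is larger, and analogously for the $(t,t')$ factor with exponents $(i_1',i_2',i_3')$.

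Next I would pass to absolute values and use the trivial bound $\max\{|\tilde{r}'_n(s,s')|,|\tilde{r}'_n(s',s)|\}$ to eliminate the case distinction coming from Claim~\ref{claim:tensor}, so each $T$ factor is dominated by
$$|\bar{r}_n(s,s')|^{i_1}\max\{|\tilde{r}'_n(s,s')|,|\tilde{r}'_n(s',s)|\}^{i_2}|\tilde{r}''_n(s,s')|^{i_3},$$
and similarly for $T_{n,q-p,q-p-c',q-p-d'}(t,t')$. For the pre-factors $(R_{n,c,d}(s,t))^p$ and $(R_{n,c',d'}(s',t'))^p$, since by definition $R_{n,c,d}(s,t)$ is exactly one of $\bar{r}_n(s,t), \tilde{r}'_n(s,t), \tilde{r}'_n(t,s), \tilde{r}''_n(s,t)$ depending on whether $c,d$ vanish, the uniform bound
$$|R_{n,c,d}(s,t)|^p \le \max\bigl(|\bar{r}_n(s,t)|,|\tilde{r}'_n(s,t)|,|\tilde{r}'_n(t,s)|,|\tilde{r}''_n(s,t)|\bigr)^p$$
(and similarly at $(s',t')$) holds in all cases.

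Finally, the outer sum $\sum_{l,l',k,k'=0}^{q/2}$ has at most $(q/2+1)^4 = O_q(1)$ summands, and each coefficient $b_{q-2l}a_{2l}b_{q-2l'}a_{2l'}b_{q-2k}a_{2k}b_{q-2k'}a_{2k'}$ is a product of four fixed numerical constants depending only on $q$ (as specified in Theorem~\ref{thm:decomposition}), hence itself $O_q(1)$. Replacing the full sum by its maximum over the finitely many triples $(i_1,i_2,i_3)$ and $(i_1',i_2',i_3')$ with $i_1+i_2+i_3=i_1'+i_2'+i_3'=q-p$ produced by Claim~\ref{claim:tensor} yields an integrand majorized by an $O_q(1)$ constant times the one displayed in the claim; pulling this factor outside the integral and multiplying through by $(c_{a,b}n)^2$ gives the stated bound. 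There is no analytically delicate step—the substantive work is in Claims~\ref{claim:h} and~\ref{claim:tensor}, and what remains is exponent bookkeeping combined with triangle-inequality estimates; the only point that requires care is tracking the asymmetry between $\tilde{r}'_n(s,s')$ and $\tilde{r}'_n(s',s)$ coming from Claim~\ref{claim:tensor}, which is precisely why the maximum of the two appears in the final expression.
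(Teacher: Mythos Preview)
Your proposal is correct and follows exactly the same route as the paper, which simply states that the bound follows ``by using Claim~\ref{claim:tensor} and noting that all the coefficients have order $O_q(1)$.'' You have carefully spelled out the bookkeeping (the case split in Claim~\ref{claim:tensor} producing the $\max\{|\tilde r'_n(s,s')|,|\tilde r'_n(s',s)|\}$, the four-case definition of $R_{n,c,d}$ giving the other maximum, and the $O_q(1)$ count of summands and size of coefficients) that the paper leaves implicit.
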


\begin{proof}[Proof of Proposition \ref{prop:cov:zero}] Let $S_{p,q,i_1,i_2,i_3,i_1',i_2',i_3'}$ be the integrand as above, we need to show that
\begin{equation}\label{eqn:S}
\frac{1}{n^2} S_{p,q,i_1,i_2,i_3,i_1',i_2',i_3'} = o(1).
\end{equation}
By Lemma \ref{lemma:bounds} we have
$$v_n(s),v_n(t), v_n(s'), v_n(t'), \bar{r}_n(s,t), \bar{r}_n(s',t'), \bar{r}_n(s,s'),\tilde{r}'_n(s,s'), \tilde{r}''_n(s,s'),\bar{r}_n(t,t'),\tilde{r}'_n(t,t'),\tilde{r}''_n(t,t')=O(1)$$
uniformly in $s,s',t,t'$. By Claim \ref{claim:Spq}, at least one of the $i_1,i_2,i_3$ must be at least 1, and similarly at least one of the $i_1',i_2',i_3'$ must be at least 1.  Hence by the bounds on the correlations from Lemma \ref{lemma:bounds}, it suffices to show
$$ \frac{1}{n^2} \int_{[na,nb]^4} \frac{1}{|s-t|+1}\frac{1}{|s'-t'|+1}  \frac{1}{|s-s'|+1}\frac{1}{|t-t'|+1}dsdtds' dt' =o(1).$$
Let $S$ denote the above integral.  If $p\ge 1$ then we have
\begin{eqnarray*}
S&\le& \int_{[na,nb]^4} \frac{1}{|s-t|+1}\frac{1}{|t-t'|+1}\frac{1}{|t'-s'|+1} dsdtds' dt'\\
&\le& \int_{na}^{nb}\int_{[-n(b-a), n(b-a)]^{3}} \frac{1}{|x|+1}\frac{1}{|y|+1}\frac{1}{|z|+1} dxdydz\,ds\\
&& \text{ by a change of variables $x = s-t, y=t-t', z = t'-s'$}\\
&=& 8 n(b-a) \left (\int _{0}^{n(b-a)}\frac{dx}{x+1}\right )^{3} = O(n\log^{3}n) = o(n^{2}).
\end{eqnarray*}
 
This completes the proof.
 \end{proof}

\section{Verification of Condition \eqref{eqn:single} and  Condition \eqref{eqn:finiteness}}\label{sect:single}
We first verify Condition \eqref{eqn:single}. By Mehler's formula \cite{AL}, we can write
\begin{eqnarray}\label{eqn:L_2:n}
&&\Var (I_q^B(g_q^{(n)})) =\E(I_q^B(g_q^{(n)})^2) \\
&=&\sum_{l,l'=0}^{q/2} b_{q-2l} a_{2l} b_{q-2l'}a_{2l'} \sum_{\Bd \in \BD_{q,2l,2l'}} \frac{(q-2l)! (2l)! (q-2l')! (2l')!} {d_1! d_2! d_3! d_4!}  \frac{1}{c_{a,b} n}   \int_{[na,nb]^2}  v_n(s) v_n(t) S_{n,q,\Bd}(s,t)ds dt, \nonumber
\end{eqnarray}
where $\BD_{q,2l,2l'}$ is the set of non-negative integral tuples $\Bd=(d_1,d_2,d_3,d_4)$ such that
\begin{equation}\label{eq:def:BD}
d_1+d_2=q-2l, d_3+d_4=2l, d_1+d_3=q-2l', d_2+d_4=2l'
\end{equation}
and for $\Bd=(d_1,d_2,d_3,d_4)\in \BD_{q,2l,2l'}$,
\begin{equation}\label{eq:def:S}
S_{n,q,\Bd}(s,t) := \bar{r}_n(s, t)^{d_1}  {\tilde{r}'_n(s, t)}^{d_2}  {\tilde{r}'_n(t, s)}^{d_3}{\tilde{r}''_n(s, t)}^{d_4}dsdt.
\end{equation}


As $q$ is fixed, for Condition  \eqref{eqn:single} it suffices to show the following

\begin{lemma}\label{lemma:L2:lim} For $q\ge 2$ and for any $l,l'$ and $\Bd \in \BD_{q,2l,2l'}$, we have
$$\lim_{n \to \infty} \frac{1}{c_{a,b} n}   \int_{[na,nb]}  \int_{[na,nb]}  v_n(s) v_n(t) S_{n,q,\Bd}(s,t)ds dt<\infty.$$
  \end{lemma}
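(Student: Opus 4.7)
My plan is to combine a uniform tail bound with a local scaling argument. First, I would observe that \eqref{eq:def:BD} forces $d_1+d_2+d_3+d_4=q\ge 2$, so parts (i)--(iv) of Lemma \ref{lemma:bounds} immediately yield the pointwise estimate
$$v_n(s)v_n(t)\,|S_{n,q,\Bd}(s,t)| \le \frac{C}{(|s-t|+1)^q}$$
on $[na,nb]^2$, with $C$ depending only on $q$, $a$, $b$, $\mu$. Fubini then gives
$$\frac{1}{n}\int_{[na,nb]^2} \frac{ds\,dt}{(|s-t|+1)^q} \le (b-a)\int_{\R}\frac{d\tau}{(|\tau|+1)^q} < \infty,$$
where integrability in $\tau$ uses the hypothesis $q\ge 2$ critically. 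This already yields the uniform-in-$n$ upper bound claimed in the lemma, but I would push further to establish existence of the limit by rescaling.

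Next, the substitution $s=nz$, $t=nz+\tau$ converts the double integral to
$$\frac{1}{c_{a,b}}\int_a^b \int_{-n(z-a)}^{n(b-z)} v_n(nz)\,v_n(nz+\tau)\,S_{n,q,\Bd}(nz,nz+\tau)\,d\tau\,dz.$$
For fixed $z\in(a,b)$ and $\tau\in\R$, setting $u=\omega(z)\tau$ puts $t/n=z+u/(n\omega(z))$ into exactly the form handled by Lemma \ref{lemma:3.3}(a)--(b). Combining these local limits with the formulas \eqref{eqn:rb}, \eqref{eqn:r'}, \eqref{eqn:r''}, \eqref{eqn:v_n}, I expect the pointwise convergences
$$\bar{r}_n(nz,nz+\tau)\to S(u),\quad \tilde{r}_n'(nz,nz+\tau)\to \Psi_1(u),\quad \tilde{r}_n''(nz,nz+\tau)\to \Psi_2(u),\quad v_n(nz)\to \tfrac{\pi\omega(z)}{\sqrt{3}},$$
for explicit functions $\Psi_1,\Psi_2$ built from $S,S',S''$, with convergence uniform in $(z,\tau)$ over compact sets.

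Finally, the bound from the first step serves as an integrable dominating function on $[a,b]\times\R$, so dominated convergence yields the limit as an integral of the pointwise limit. A substitution $u'=u\omega(z)$ factors out the universal constant $\kappa_\Bd=\int_\R S(u)^{d_1}\Psi_1(u)^{d_2}\Psi_1(-u)^{d_3}\Psi_2(u)^{d_4}\,du$, leaving a finite expression proportional to $\int_a^b\omega(z)\,dz/c_{a,b}$. The main technical hurdle is the verification of the pointwise limits for the derivative-type correlations $\tilde{r}_n'$ and $\tilde{r}_n''$: both \eqref{eqn:r'} and \eqref{eqn:r''} contain several $K_n^{(l,m)}/K_n$ ratios, and one has to check they combine into the clean $\Psi_1$ and $\Psi_2$ above, using the fact that the auxiliary factor $K_n^{(0,1)}(x,x)/(nK_n(x,x))$ vanishes in the limit by Lemma \ref{lemma:3.3}(c). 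Once this bookkeeping is done and the uniform tail bound from the first step is in place, the dominated convergence step is immediate.
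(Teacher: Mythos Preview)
Your proposal is correct and follows essentially the same approach as the paper: a uniform tail bound $|v_n(s)v_n(t)S_{n,q,\Bd}(s,t)|\le C(|s-t|+1)^{-q}$ from Lemma~\ref{lemma:bounds}, a change of variables separating the ``diagonal'' direction $\tau=t-s$ from the ``center'' direction, pointwise convergence of the correlations via Lemma~\ref{lemma:3.3}, and dominated convergence. The only cosmetic differences are that the paper uses the symmetric coordinates $(\sigma,\tau)=(s+t,t-s)$ and applies dominated convergence in two stages (first in $\theta=\sigma/2n$, then in $\tau$), whereas you use $(z,\tau)=(s/n,t-s)$ and apply it once on $[a,b]\times\R$; your single-step version is slightly cleaner but otherwise equivalent.
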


\begin{proof} Let $\tau= t-s, \sigma = t+s$, we rewrite as
$$ \frac{1}{c_{a,b} n}   \int_{[na,nb]}  \int_{[na,nb]}  v_n(s) v_n(t) S_{n,q,\Bd}(s,t)ds dt=\frac{1}{2}\int_{-n(b-a)}^{n(b-a)} g_{n,q,\Bd}(\tau) d\tau$$
where
\begin{equation}\label{eqn:gtau}
g_{n,q,\Bd}(\tau) = \frac{1}{c_{a,b} n}  \int_{2na + |\tau|}^{2 nb - |\tau|} v_n((\sigma-\tau)/2)  v_n((\sigma+\tau)/2)  S_{n,q,\Bd}((\sigma-\tau)/2,(\sigma+\tau)/2) d\sigma.
\end{equation}
\begin{claim}\label{claim:tau:large}  There exists a constant $B$ depending on $a,b, \omega$ but not on $q$ such that
$$ |g_{n,q,\Bd}(\tau)| \le  \frac{B^q}{(|\tau|+1)^q}$$
uniformly in $n$ and $\tau$.
\end{claim}
\begin{proof} This follows from \eqref{eq:def:S} and Lemma \ref{lemma:bounds}, noting that the total number of the correlation factors in $S_{\Bd}$ (each bounded by $O(\frac{1}{\tau +1})$ in absolute value) is $q$.
\end{proof}

Now we consider $\tau$ fixed (independent of $n$). By changing of variable from $\sigma$ to $2n\theta$, we can write
\begin{eqnarray}
g_{n,q,\Bd}(\tau) &= &\frac{2}{c_{a,b}}  \int_{a + |\tau|/2n}^{b - |\tau|/2n}  v_n(n\theta+\tau/2) v_n(n\theta-\tau/2)  S_{n,q,\Bd}(n\theta-\tau/2,n\theta+\tau/2) d\theta \nonumber \\
&=& \frac{2}{c_{a,b}}  \int_{a  }^{b } \textbf{1}_{[a + |\tau|/2n, b - |\tau|/2n]}(\theta) f_{n,q,\Bd}(\tau, \theta)d\theta,\label{eq:gn:fn}
\end{eqnarray}
where
$$f_{n, q, \Bd}(\tau, \theta) = v_n(n\theta+\tau/2) v_n(n\theta-\tau/2)  S_{n,q,\Bd}(n\theta-\tau/2,n\theta+\tau/2).$$

By (a) of Lemma \ref{lemma:3.3} we have uniformly for $\theta\in [a, b]$ and $\tau$ in a compact set,
\begin{equation}\label{eqn:=}
\lim_{n\to \infty} \frac{K_n^{(l,m)}(\theta+ \tau/2n, \theta+\tau/2n)}{n^{l+m}K_n(\theta,\theta)} =  (-1)^m (\omega(\theta))^{l+m}S^{(l+m)}(0) = (-1)^m (\omega(\theta))^{l+m} \pi^{l+m} \tau_{l,m}
\end{equation}
and
\begin{equation}\label{eqn:+-}
\lim_{n\to \infty} \frac{K_n^{(l,m)}(\theta+ \tau/2n, \theta-\tau/2n)}{n^{l+m}K_n(\theta,\theta)} =  (-1)^m (\omega(\theta))^{l+m}S^{(l+m)}(\tau \omega(\theta)),
\end{equation}
where we recall that $\tau_{l,m} = (-1)^{(l-m)/2}/(l+m+1)$ if $l+m$ is even and $\tau_{l,m}=0$ otherwise.

Hence by the formula for $v_n$ from \eqref{eqn:v_n}, the following holds uniformly for $\theta\in [a, b]$ and $\tau$ in a compact set
\begin{align}
\lim_{n \to \infty} v_n^2(n\theta+\tau/2) &=\lim_{n \to \infty} \left[ \frac{1}{n^2} \frac{K_n^{(1,1)}(\theta+ \tau/2n, \theta+\tau/2n)}{K_n(\theta+ \tau/2n,\theta+ \tau/2n)} - \frac{1}{n^2} \left(\frac{K_n^{(0,1)}(\theta+ \tau/2n,\theta+ \tau/2n)}{K_n(\theta+ \tau/2n,\theta+ \tau/2n)}\right)^2 \right] \nonumber\\
&=\omega(\theta)^2 \pi^2/3, \label{eq:v:lim}
\end{align}
where we applied \eqref{eqn:=} to $(l, m) = (1, 1), (0, 0)$ and $(0,1)$.
Similarly,\begin{equation}\nonumber
\lim_{n\to \infty} v_n^2(n\theta-\tau/2) = \omega(\theta)^2 \pi^2/3.
\end{equation}
Furthermore, again by Lemma \ref{lemma:3.3}, and by \eqref{eqn:rb} with $s= n\theta-\tau/2$ and $t= n\theta+\tau/2$, we have
$$\lim_{n\to \infty}\overline{r}_n(s,t) = \lim_{n\to \infty} \frac{K_n(s/n,t/n)}{\sqrt{K_n(s/n,s/n) K_n(t/n,t/n)}} = S(\tau \omega(\theta) )= \frac{\sin (\pi \tau \omega(\theta))}{ \pi \tau \omega(\theta)}$$
Also, by \eqref{eqn:r'}
\begin{align}\label{eqn:r':lim}
\lim_{n \to \infty} \tilde{r}'_n(s,t)=& \lim_{n\to \infty} \frac{1}{v_n(t) \sqrt{K_n(s/n,s/n) K_n(t/n,t/n)}} \left(\frac{K_n^{(0,1)}(s/n,t/n)}{n} - \frac{K_n^{(0,1)}(t/n,t/n)}{n K_n(t/n,t/n)} K_n(s/n,t/n)\right) \nonumber \\
=& \lim_{n\to \infty} \frac{K_n^{(0,1)}(s/n,t/n)}{v_n(t) n\sqrt{K_n(s/n,s/n) K_n(t/n,t/n)}} =\lim_{n\to \infty} \frac{K_n^{(0,1)}(s/n,t/n)}{v_n(t) n {  K_n(\theta, \theta)}}  = \frac{\sqrt 3 S'(\tau \omega(\theta))}{\pi},
\end{align}
where in the last line we used \eqref{eqn:=}, \eqref{eqn:+-} and \eqref{eq:v:lim}.

Similarly, by \eqref{eqn:r''} and \eqref{eqn:+-}
\begin{align}\label{eqn:r'':lim}
\lim_{n\to \infty }\tilde{r}''_n(s,t)&= \lim_{n \to \infty} \frac{1}{v_n(s) v_n(t) \sqrt{K_n(s/n,s/n) K_n(t/n,t/n)}}  \Big[\frac{K_n^{(1,1)}(s/n,t/n)}{n^2} \nonumber \\
&- \frac{K_n^{(0,1)}(t/n,t/n)}{n^2 K_n(t/n,t/n)} K_n^{(1,0)}(s/n,t/n) - \frac{K_n^{(0,1)}(s/n,s/n)}{n^2 K_n(s/n,s/n)} K_n^{(0,1)}(s/n,t/n) \nonumber \\
&+ \frac{K_n^{(0,1)}(t/n,t/n)}{n K_n(t/n,t/n)} \frac{K_n^{(0,1)}(s/n,s/n)}{n K_n(s/n,s/n)} K_n(s/n,t/n) \Big] \nonumber \\
&= \lim_{n \to \infty} \frac{K_n^{(1,1)}(s/n,t/n)}{n^2 v_n(s) v_n(t) \sqrt{K_n(s/n,s/n) K_n(t/n,t/n)}} \nonumber \\
& = \frac{1}{ \omega(\theta)^2 \pi^2/3}  [-(\omega(\theta))^{2}S {''}(\tau \omega(\theta))] =-\frac{3}{\pi^2}S {''}(\tau \omega(\theta)).
\end{align}

Putting together, we thus obtain  that for each fixed $\tau$, the following limit exists
\begin{equation}\label{eqn:existence}
\lim_{n\to \infty }  v_n(n\theta+\tau/2) v_n(n\theta-\tau/2)  S_{n,q,\Bd}(n\theta-\tau/2,n\theta+\tau/2) =: f_{q,\Bd}(\theta,\tau).
\end{equation}
Thus, the integrand in \eqref{eq:gn:fn} converges as $n\to\infty$ for fixed $\tau$ and $\theta$. As seen in the proof of Claim \ref{claim:tau:large}, this integrand is uniformly bounded by $B^q$ which depends on $q$ but not on $n,\tau$. Hence there exists a function $h_{q,\Bd}$ such that
$$\lim_{n\to \infty} g_{n,q,\Bd}(\tau) = h_{q,\Bd}(\tau).$$
Since $|g_{n,q,\Bd}(\tau)|\le \frac{B^q}{(|\tau|+1)^{q}}$ by Claim \ref{claim:tau:large}, we also have $|h_{q,\Bd}(\tau)|\le \frac{B^q}{(|\tau|+1)^{q}}$. In particular, it is integrable on $\R$. Therefore, for every $\ep>0$, there exists $T>0$ such that
$$\int_{|\tau|>T} |h_{q,\Bd}(\tau)| d\tau<\ep.$$
To complete the proof of Lemma \ref{lemma:L2:lim}, we write
\begin{eqnarray}
\frac{1}{c_{a,b} n}   \int_{[na,nb]}  \int_{[na,nb]}  v_n(s) v_n(t) S_{n,q,\Bd}(s,t)ds dt &=& \frac{1}{2}\int_{-n(b-a)}^{n(b-a)} g_{n,q,\Bd}(\tau) d\tau\nonumber\\
 &=&  \frac{1}{2}\int_{|\tau| \le T} g_{n,q,\Bd}(\tau) d\tau + \frac{1}{2}\int_{T<|\tau| \le n(b-a)} g_{n,q,\Bd}(\tau) d\tau\nonumber.
\end{eqnarray}
By the Dominated Convergence Theorem,
$$ \left |\int_{|\tau| \le T} g_{n,q,\Bd}(\tau) d\tau - \int_{|\tau| \le T} h_{q,\Bd}(\tau) d\tau\right |\le \ep\text{ for sufficiently large } n.$$
For the remaining term, we have
$$\left|\int_{T<|\tau| \le n(b-a)}g_{n,q,\Bd}(\tau) d\tau \right|  \le \int_{T }^{\infty} \frac{2B^q\,d\tau}{(\tau+1)^{q}} \le \frac{2B^q}{T^{q-1}}\le \ep$$
 by choosing $T$ sufficiently large compared to $q$ and $\ep$. Combining these bounds, we conclude that
 $$\lim_{n \to \infty} \frac{1}{c_{a,b} n}   \int_{[na,nb]}  \int_{[na,nb]}  v_n(s) v_n(t) S_{q,\Bd}(s,t)ds dt= \frac12\int_{\R} h_{q,\Bd}(\tau) d\tau$$
 proving the desired limit in Lemma \ref{lemma:L2:lim}.
\end{proof}

Next we verify Condition \eqref{eqn:finiteness} that $\sigma^2 = \sum_{q=2}^\infty  \sigma_q^2 <\infty$. With $\sigma_{n,q}=\E(I_q^B(g_q^{(n)})^2)$, by Parseval's identity we have $\sum_{q=2}^\infty \sigma_{n,q}^2 =1+o(1)$ because the variance of the LHS term in the decomposition of Theorem \ref{thm:decomposition} is $1+o(1)$.  Hence by Fatou's lemma,
$$\sum_{q=2}^\infty \lim_{n\to \infty} \sigma_{n,q}^2 \le \lim_{n\to \infty} \sum_{q=2}^\infty \sigma_{n,q}^2 =1.$$
The proof is now complete because the LHS above is exactly  $\sum_{q=2}^\infty  \sigma_q^2$.

\section{Verification of Condition \eqref{eqn:tail}}\label{sect:tail}
The proof of this result is a little more involved, where we adopt the ideas from \cite{AADL}. Using the notations from the previous part, see \eqref{eqn:L_2:n}-\eqref{eq:def:S}, we recall that
 \begin{align}\label{eqn:L_2:n'}
&\E(I_q^B(g_q^{(n)})^2)\nonumber\\
&= \frac{1}{c_{a,b} n}   \int_{[na,nb]^2}   \sum_{l,l'=0}^{q/2} b_{q-2l} a_{2l} b_{q-2l'}a_{2l'} \sum_{\Bd \in \BD_{q,2l,2l'}} \frac{(q-2l)! (2l)! (q-2l')! (2l')!} {d_1! d_2! d_3! d_4!}  v_n(s) v_n(t) S_{n,q,\Bd}(s,t)ds dt \nonumber.
\end{align}

We first treat the off-diagonal region. Similarly to the previous section we have
\begin{claim}\label{claim:tau:large'}  There exists a constant  $C$ depending on $a,b, \omega$ such that
	$$\sum_{l,l'=0}^{q/2} |b_{q-2l} a_{2l} b_{q-2l'}a_{2l'}| \sum_{\Bd \in \BD_{q,2l,2l'}} \frac{(q-2l)! (2l)! (q-2l')! (2l')!} {d_1! d_2! d_3! d_4!}  |v_n(s) v_n(t) S_{n,q,\Bd}(s,t)|  \le  \frac{C^q}{(|t-s|+1)^q}$$
	uniformly in $n$ and $\tau=t-s$.
\end{claim}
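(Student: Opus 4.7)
The plan is to decouple this uniform estimate into two pieces: an analytic part governed by Lemma~\ref{lemma:bounds}, and a purely combinatorial part involving only the Hermite coefficients $a_{2l},b_{q-2l}$ together with the multinomial weights. All the decay in $|t-s|$ will come from the first piece; the second piece must be shown to contribute at most $C_0^q$ uniformly in $q$.

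For the analytic part, the key observation is that by \eqref{eq:def:BD}, every $\Bd\in\BD_{q,2l,2l'}$ satisfies $d_1+d_2+d_3+d_4=q$. Consequently $S_{n,q,\Bd}(s,t)$ is a product of exactly $q$ correlation factors, so applying parts (i)--(iv) of Lemma~\ref{lemma:bounds} termwise immediately yields
$$|v_n(s)v_n(t) S_{n,q,\Bd}(s,t)|\le \frac{C^{q+2}}{(|t-s|+1)^q}$$
uniformly in $n,s,t,l,l',\Bd$. The entire $|t-s|$ decay is already captured here.

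For the combinatorial part, it would then suffice to establish
$$\Sigma_q:=\sum_{l,l'=0}^{q/2}|b_{q-2l}a_{2l}b_{q-2l'}a_{2l'}|\sum_{\Bd\in\BD_{q,2l,2l'}}\frac{(q-2l)!(2l)!(q-2l')!(2l')!}{d_1!d_2!d_3!d_4!}\le C_0^q.$$
Since $b_{2k+1}=0$, I may restrict to $q=2Q$ even (odd $q$ gives $\Sigma_q=0$). Each $\Bd$ is parametrized by $d_1$ alone, so $|\BD_{q,2l,2l'}|\le q+1$, and a direct binomial rearrangement together with the crude bounds $\binom{m}{d_1}\le 2^q$ and $d_1!d_2!d_3!d_4!\le q!$ (the latter from $\binom{q}{d_1,d_2,d_3,d_4}\ge 1$) gives at most $16^q\cdot q!$ per summand. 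On the Hermite side, the explicit formulas from Theorem~\ref{thm:decomposition} give $|a_{2l}|\le C/(2^l l!)$ and $|b_{2(Q-l)}|\le C/(2^{Q-l}(Q-l)!)$, and a Vandermonde-type computation then yields $\sum_{l=0}^{Q}|b_{q-2l}a_{2l}|\le C^2/Q!$. Assembling these pieces and using the central-binomial bound $q!/(Q!)^2=\binom{q}{Q}\le 2^q$ produces $\Sigma_q\le C_0^q$ as required.

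The main (and really only) obstacle here is this combinatorial accounting: a naive estimate for $\sum_\Bd$ alone already gives $(q!)^2$, which would be fatal after the outer summation. What saves the day is the $1/(Q!)^2$ decay of the squared Hermite-coefficient sum, which combines with $q!/(Q!)^2=\binom{q}{Q}\le 2^q$ to absorb the factorial exactly, leaving only a controlled exponential $C_0^q$ to multiply against the geometric bound from Lemma~\ref{lemma:bounds}.
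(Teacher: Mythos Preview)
Your argument is correct. The overall architecture---split off the analytic decay via Lemma~\ref{lemma:bounds} using $d_1+d_2+d_3+d_4=q$, then show the remaining combinatorial sum is at most $C_0^q$---matches the paper exactly. The combinatorial bookkeeping, however, is genuinely different. The paper exploits log-convexity of the Gamma function to obtain the lower bound $d_1!d_2!d_3!d_4!\ge (q/2-l)!(q/2-l')!l!l'!$, which makes the Hermite coefficients cancel \emph{termwise}: each summand $|b_{q-2l}a_{2l}b_{q-2l'}a_{2l'}|\frac{(q-2l)!(2l)!(q-2l')!(2l')!}{d_1!d_2!d_3!d_4!}$ is bounded by $4^q$, and the polynomially many terms are then absorbed. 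Your route instead tolerates a large per-term bound $16^q q!$ (via the rearrangement into four binomials times $d_1!d_2!d_3!d_4!\le q!$) and recovers the loss \emph{globally} through the summed Hermite-coefficient estimate $\sum_l|b_{q-2l}a_{2l}|\le C^2/Q!$ and the central-binomial identity $q!/(Q!)^2=\binom{q}{Q}\le 2^q$. Both are elementary; the paper's version is a bit sharper per term and avoids the factorial detour, while yours sidesteps the convexity observation entirely. One small omission: you should mention that the factor $|\BD_{q,2l,2l'}|\le q+1$ is also absorbed into $C_0^q$ in the final assembly.
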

\begin{proof} Recall that if $q$ is even then
$$b_{q-2l}= \frac{H_{q-2l}(0)}{\sqrt{2\pi}(q-2l)!} =  (-1)^{q/2-l}\frac{(q-2l-1)!!}{\sqrt{2\pi} (q-2l)!}$$
and
$$a_{2l} =  \sqrt{\frac{2}{\pi}} \frac{1}{2^l l! (2l-1)}.$$
By the convexity of log of the Gamma function, we have $n_1!n_2!\ge (\frac{n_1+n_2}{2}!)^2$ if $n_1$ and $n_2$ have the same parity. Thus, it follows from definition \eqref{eq:def:BD} of $\BD_{q,2l,2l'}$ that
$$d_1! d_2! d_3! d_4! \ge (q/2-l)! (q/2-l')! l! l'!.$$
Hence we have the following rather generous estimate
$$|b_{q-2l} a_{2l} b_{q-2l'} a_{2l'}| \frac{(q-2l)! (2l)! (q-2l')! (2l')!}{d_1!d_2!d_3!d_4!}  \le \frac{(q-2l)!! (2l)!! (q-2l')!! (2l')!!}{ (q/2-l)! (q/2-l')! l! l'!} \le 4^q.$$
The claim then follows by using \eqref{eq:def:S} and Lemma \ref{lemma:bounds} as in Claim \ref{claim:tau:large}.
\end{proof}
It thus follows that for $T_0$ sufficiently large,
 \begin{equation}\label{eqn:off-diag}
\sum_{q\ge Q}  \frac{1}{c_{a,b} n}   \int_{\substack{|s-t| \ge T_0\\ s, t\in [na, nb]}}    \sum_{l,l'=0}^{q/2} |b_{q-2l}\cdots a_{2l'}| \sum_{\Bd \in \BD_{q,2l,2l'}} \frac{(q-2l)!\cdots(2l')!} {d_1! d_2! d_3! d_4!}  |v_n(s) v_n(t) S_{n,q,\Bd}(s,t) |ds dt  \le (C'/T_0)^{Q-1}.
\end{equation}
which converges to $0$ as $Q\to \infty$.
\subsection{Diagonal term} Hence for our main result, it suffices to deal with the diagonal region $|\tau| \le T_0$ (or $|s-t| \le T_0$), more precisely we will need to show
$$\lim_{Q\to \infty} \limsup_{n\to \infty}\sum_{q\ge Q}  \frac{1}{c_{a,b} n}   \int_{\mathcal I}  \sum_{l,l'=0}^{q/2} b_{q-2l}\cdots a_{2l'} \sum_{\Bd \in \BD_{q,2l,2l'}} \frac{(q-2l)! \cdots (2l')!} {d_1! d_2! d_3! d_4!}  v_n(s) v_n(t) S_{n,q,\Bd}(s,t) ds dt    = 0$$
for some $\mathcal I$ containing the region $\{(s,t) \in (na,nb)^{2}: |s-t| \le T_0\}$.
We divide the interval $(na,nb)$ into $\Theta(n)$ sub-intervals of length $T_0$
$$I_{i}= (n\theta_i, n \theta_i +T_0), \theta_i = a+  i T_0/n, 0\le i \le n(b-a)/T_0.$$
We let $$\mathcal I = \cup_{|j-i|\le 1} I_i\times I_j.$$
Since $(I_i \times I_{i+1}) \cup (I_{i+1} \times I_i) = ((I_i \cup I_{i+1}) \times (I_i \cup I_{i+1})) \bs (I_i \times I_i) \cup (I_{i+1} \times I_{i+1})$, by the triangle inequality and by replacing $I_i$ by $I_{i}\cup I_{i+1}$ if needed and similarly for $I_i\times I_{i-1}$, it suffices to work with the simplified sum $\frac{1}{c_{a,b}n} \sum_{i} \sum_{q\ge Q}  \int_{s,t\in I_i \times I_i}$. To this end,
the  key observation is that, by \eqref{eqn:decomposition'}, for each $i$ the integral sum $\sum_{q\ge Q}  \int_{s,t\in I_i \times I_i}$ corresponds to the tail of the variance of $N_n(I_i)$ (i.e. the number of roots of $\overline{T}_n$ over $I_i$), more precisely
\begin{align*}
& \sum_{q\ge Q}  \int_{(s,t)\in I_i \times I_i} \sum_{l,l'=0}^{q/2} b_{q-2l} a_{2l} b_{q-2l'}a_{2l'} \sum_{\Bd \in \BD_{q,2l,2l'}} \frac{(q-2l)! (2l)! (q-2l')! (2l')!} {d_1! d_2! d_3! d_4!}   v_n(s) v_n(t) S_{n,q,\Bd}(s,t) ds dt\\
&= \sum_{q\ge Q} \Var \int_{I_i} f_q(\overline{T}_n(s) , \CT_n'(s)) v_n(s)ds.
\end{align*}


Hence, we would like to show that (where we replace $c_{ab}n$ by $(b-a)n/T_0$ to make the expression more natural)
\begin{equation}\label{eqn:hQ}
\lim_{Q \to \infty} \limsup_{n\to \infty}\frac{1}{(b-a)n/T_0} \sum_i \sum_{q \ge Q} \Var\Big(\int_{I_i} f_q(\overline{T}_n(s) , \CT_n'(s)) v_n(s)ds\Big)=0.
\end{equation}


Now for each $\theta \in (a,b)$, with $I_{\theta} := (n\theta, n\theta +T_0)$, it follows from Section \ref{sec:proof:bounds} and from Section \ref{sect:single} that within $I_\theta$, the process $\overline{T}_n$ converges to the (stationary) gaussian process $\overline{T}_{\infty, \theta}$ of zero mean and covariance
\begin{equation}\label{eqn:cov:T}
r(s,t) = S((s-t) \omega(\theta)) = \frac{\sin(\pi (s-t) \omega(\theta) )}{\pi (s-t) \omega(\theta)}.
\end{equation}
More precisely, we prove the following.


\begin{lemma}\label{lemma:local:single} Uniformly in $\theta\in (a, b)$, the following convergence holds. As $n\to \infty$,  uniformly in $(s, t)\in \mathcal I_\theta\times \mathcal I_\theta$, the second moments and covariances of the rescaled processes $\overline{T}_n$ and its first order derivatives converge to those of the Gaussian process  $\overline{T}_{\infty, \theta}$.
\end{lemma}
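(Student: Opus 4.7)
The plan is to express each of the second moments $\bar r_n(s,t)$, $\widetilde r_n'(s,t)$, $\widetilde r_n''(s,t)$, and $v_n(s)^2$ in terms of the reproducing kernel $K_n$ and its mixed derivatives via the formulas \eqref{eqn:v_n}, \eqref{eqn:rb}, \eqref{eqn:r'}, \eqref{eqn:r''} derived in Section \ref{prep}, and then apply the uniform local scaling limit of Lemma \ref{lemma:3.3}(a)--(b) after the change of variables $s = n\theta + u$, $t = n\theta + v$ with $u,v \in [0,T_0]$.

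Concretely, first I would fix $\theta \in [a,b]$ and write $x = \theta + u/n$, $y = \theta + v/n$, so that
$$\frac{K_n^{(l,m)}(x,y)}{n^{l+m} K_n(\theta,\theta)} = \frac{K_n^{(l,m)}\bigl(\theta + \frac{u\omega(\theta)}{n\omega(\theta)}, \theta + \frac{v\omega(\theta)}{n\omega(\theta)}\bigr)}{n^{l+m} K_n(\theta,\theta)} \cdot \frac{1}{\omega(\theta)^{l+m}} \cdot \omega(\theta)^{l+m}.$$
By Lemma \ref{lemma:3.3}(a), this quantity tends to $(-1)^m \omega(\theta)^{l+m} S^{(l+m)}(\omega(\theta)(u-v))$, uniformly in $\theta \in [a,b]$ and $(u,v)$ in a compact set; moreover Lemma \ref{lemma:3.3}(b) implies $K_n(\theta + u/n, \theta + u/n)/K_n(\theta,\theta) \to 1$ uniformly as well. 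Substituting these asymptotics into \eqref{eqn:rb}, \eqref{eqn:r'}, \eqref{eqn:r''}, and \eqref{eqn:v_n} — exactly as was done pointwise in \eqref{eqn:r':lim}, \eqref{eqn:r'':lim}, and \eqref{eq:v:lim} — yields
\begin{align*}
\bar r_n(s,t) &\to S(\omega(\theta)(u-v)), \\
v_n(n\theta + u) &\to \pi\omega(\theta)/\sqrt 3, \\
\widetilde r_n'(s,t) &\to \frac{\sqrt 3}{\pi}\,S'(\omega(\theta)(u-v)), \\
\widetilde r_n''(s,t) &\to -\frac{3}{\pi^2}\,S''(\omega(\theta)(u-v)),
\end{align*}
and these are precisely the corresponding second moments of the stationary Gaussian process $\overline{T}_{\infty,\theta}$ with covariance \eqref{eqn:cov:T} (and its standardized derivative). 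The key point is that each factor in these ratios converges uniformly for $\theta \in [a,b]$ and $(u,v) \in [0,T_0]^2$, and uniform convergence is preserved under finitely many sums, products, and quotients provided the denominators are bounded away from zero; Lemma \ref{lemma:bounds}(i) and Lemma \ref{lemma:3.3}(c) guarantee the latter.

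The main (mild) obstacle is bookkeeping: \eqref{eqn:r''} involves a sum of four terms each with several factors of $1/K_n(\cdot,\cdot)$ and $v_n(\cdot)^{-1}$, so one must check that after combining them the limit indeed matches $-\tfrac{3}{\pi^2}S''$ rather than a smaller cancellation. This is handled by observing that the three ``correction'' terms in \eqref{eqn:r''} all involve a factor $K_n^{(0,1)}(z,z)/(n K_n(z,z))$ which, by Lemma \ref{lemma:3.3}(c), converges to $0$ uniformly in $z \in [na,nb]$; hence only the leading $K_n^{(1,1)}/n^2$ term survives in the limit, and dividing by the limit of $v_n(s)v_n(t)\sqrt{K_n(s/n,s/n)K_n(t/n,t/n)}/n$ produces the stated expression. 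The same cancellation phenomenon, worked out already in \eqref{eqn:r':lim}, handles $\widetilde r_n'$. All convergences are uniform in $\theta \in [a,b]$ since the underlying limits in Lemma \ref{lemma:3.3} are, which gives the uniformity claimed in the statement.
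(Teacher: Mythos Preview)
Your proposal is correct and follows essentially the same route as the paper: both arguments reduce the lemma to the pointwise limits \eqref{eq:v:lim}, \eqref{eqn:r':lim}, \eqref{eqn:r'':lim} already established in Section~\ref{sect:single} via Lemma~\ref{lemma:3.3}. The paper's proof differs only in presentation: it works from the limiting side, explicitly computing $\E X(s)X'(t)$ and $\E X'(s)X'(t)$ for $X=\overline{T}_{\infty,\theta}$ directly from the sinc covariance \eqref{eqn:cov:T} and then matching these to the cited limits, whereas you assert the match without writing out that (elementary) verification; your treatment of the uniformity in $\theta$ is if anything more explicit than the paper's.
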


\begin{proof} For convenience let $X(t)$ be the process $\overline{T}_{\infty, \theta}(t)$, which has covariance $\E(X(s)X(t))$ as in \eqref{eqn:cov:T}. Both $X(t)$ and $\overline{T}_n(t)$ have unit variance. We have 
$$\E |X'(t)|^2 = \lim_{s\to t} \E (\frac{X_s-X_t}{s-t})^2 =  \lim_{s\to t}  \frac{1}{(s-t)^2}(2 -2 r(s,t))   = \omega^2(\theta) \pi^2 /3 = \lim_{n\to\infty} \E |\overline{T}'_n(t)|^{2},$$
where in the last equality we used \eqref{def:vn} and \eqref{eq:v:lim}. 

Furthermore
$$\E X(s) X'(t) = \E \lim_{t'\to t} X(s)\frac{X(t')-X(t)}{t'-t}  =  \lim_{t'\to t} \frac{1}{t'-t} (r(s,t') - r(s,t))=\frac{\partial r(s,t)}{\partial t} = -\omega(\theta) S'((s-t)\omega(\theta)).$$
And so, 
$$ \frac{\E X(s) X'(t)}{\sqrt{\E |X'(t)|^2}} = \lim _{n\to \infty} \tilde{r}_n'(s,t) = \lim _{n\to \infty} \frac{\E \overline{T}_n(s) \overline{T}'_n(t)}{\sqrt{\E |\overline{T}'_n(t)|^2}}$$
where we used \eqref{def:tilde:r'} and \eqref{eqn:r':lim} (and noted that $\tau=t-s$ in the latter equation).


Lastly,
\begin{align*}\E X'(s) X'(t) &= \E \lim_{s'\to s, t'\to t} \frac{X(s')-X(s)}{s'-s} \frac{X(t')-X(t)}{t'-t}\\
&= \lim_{s'\to s, t'\to t} \frac{1}{s'-s} \frac{1}{t'-t}  (r(s',t') - r(s',t) - r(s,t')+ r(s,t)) \\
& = -(\omega(\theta))^2 S''((s-t)\omega(\theta)).
\end{align*}
After normalizing by $\sqrt{\E |X'(s)|^2}$ and $\sqrt{\E |X'(t)|^2}$, we again obtain the same as $\lim_{n\to \infty} \tilde{r}_n''(s,t)$ in \eqref{eqn:r'':lim} which is the corresponding identity for $\overline{T}_n$ by \eqref{def:tilde:r''}.
\end{proof}
We continue with several other pleasant properties.
\begin{lemma}\label{lemma:local:var} The following holds uniformly in $\theta$
\begin{enumerate}[(i)]
\item For each $q\ge 2$ we have
$$\lim_{n\to \infty} \Var\Big(\int_{I_\theta} f_q(\overline{T}_n(s) , \CT_n'(s)) v_n(s)ds\Big) = \Var\Big(\int_{(0,T_0)} f_q(\overline{T}_{\infty, \theta}(s) , \CT_{\infty,\theta}'(s)) v_\infty (s)ds\Big).$$
\vskip .1in
\item Also,
$$\lim_{n\to \infty} \Var(N_n({I_\theta})) = \Var (N_{\infty,\theta} ((0,T_0))),$$
where $N_n({I_\theta})$ and $N_{\infty,\theta}$ are the number of roots with respect to the processes $\overline{T}_n$ and $T_{\infty,\theta}$ over the intervals ${I_\theta}$ and $(0,T_0)$ respectively.
\vskip .1in
\item There exists $C_0$ not depending of $n$ and $\theta$ so that
 $$\Var(N_n({I_\theta})) \le C_0.$$
 \end{enumerate}
 \end{lemma}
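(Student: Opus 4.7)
I would prove the three parts in the order (iii), (i), (ii). Parts (iii) and (ii) rest on the Kac--Rice formula of Lemma \ref{lemma:2.2} combined with the pair-density asymptotics of Lemma \ref{lemma:2.4}, while (i) is a Mehler/Wick expansion combined with the local covariance convergence of Lemma \ref{lemma:local:single} together with the uniform bounds of Lemma \ref{lemma:bounds}.

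For (iii), since $\overline{T}_n$ and $H_n$ have the same real zeros up to rescaling, $N_n(I_\theta)$ counts zeros of $H_n$ on $(\theta,\theta+T_0/n)$, and Lemma \ref{lemma:2.2} gives
\[
\Var(N_n(I_\theta)) = \int_\theta^{\theta+T_0/n}\!\!\rho_1(x)\,dx + \iint_{(\theta,\theta+T_0/n)^2}\!\bigl(\rho_2(x,y)-\rho_1(x)\rho_1(y)\bigr)\,dx\,dy.
\]
The first summand is $O(T_0)$ by \eqref{eqn:rho1}. For the second I would rescale to $(s,t)=(n(x-\theta),n(y-\theta))\in (0,T_0)^2$ and split according to whether $|s-t|\le \eta/\omega(\theta)$. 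Lemma \ref{lemma:2.4}(ii) controls the near-diagonal contribution by $O(\eta T_0)$ after the $n^{-2}$ Jacobian, while Lemma \ref{lemma:2.4}(i) identifies $n^{-2}(\rho_2-\rho_1\rho_1)$ with $\omega(\theta)^2\Xi((t-s)\omega(\theta))+o(1)$ off the diagonal; integrability of $\Xi$ and the positivity and continuity of $\omega$ on $[a,b]$ yield the uniform bound $C_0$, independent of both $n$ and $\theta$.

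For (i), expand
\[
\Var\Big(\int_{I_\theta} f_q(\overline{T}_n(s),\CT_n'(s))v_n(s)\,ds\Big) = \int_{I_\theta^2}\Cov\bigl(f_q(\overline{T}_n(s),\CT_n'(s)),f_q(\overline{T}_n(t),\CT_n'(t))\bigr)v_n(s)v_n(t)\,ds\,dt,
\]
and use the Wick/Mehler identity (as in the expansion \eqref{eqn:L_2:n} of Section \ref{sect:single}) to write the covariance as a finite, $q$-dependent polynomial in the four correlations $\overline{r}_n(s,t)$, $\tilde{r}_n'(s,t)$, $\tilde{r}_n'(t,s)$, $\tilde{r}_n''(s,t)$. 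After the change of variables $(u,v)=(s-n\theta,t-n\theta)\in(0,T_0)^2$, Lemma \ref{lemma:local:single} gives pointwise convergence of each correlation and of $v_n(n\theta+\cdot)$, uniformly on compacta in $(u,v)$ and uniformly in $\theta\in[a,b]$, to the corresponding objects for the stationary limit $\overline{T}_{\infty,\theta}$. Lemma \ref{lemma:bounds}(i)--(iv) provides a uniform $O(1)$ majorant on the bounded rectangle $(0,T_0)^2$, and dominated convergence concludes the proof of (i).

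For (ii) I would again apply Kac--Rice on both sides rather than summing (i) over $q$ (which would additionally require a uniform-in-$n$ tail bound that is not obviously available from (i) and (iii) alone). The stationary Gaussian $\overline{T}_{\infty,\theta}$ with covariance $S((s-t)\omega(\theta))$ is smooth enough to admit its own Kac--Rice representation, and Lemma \ref{lemma:2.4} gives precisely the convergence of the rescaled densities $n^{-1}\rho_1$ and $n^{-2}(\rho_2-\rho_1\rho_1)$ to the corresponding densities for the limit process. The dominating function constructed in (iii) justifies dominated convergence, which produces the equality of variances. The principal technical obstacle throughout is the diagonal concentration of the pair density: off the diagonal one has the integrable limit $\omega^2\Xi$, but at $s=t$ the quantity $\rho_2-\rho_1\rho_1$ is of the same order $n^2$ as $\rho_1\rho_1$, and the double limit $n\to\infty$ followed by $\eta\to 0$ must be carried out carefully and uniformly in $\theta\in[a,b]$, where the continuity and strict positivity of $\omega$ is what makes all constants and rates uniform.
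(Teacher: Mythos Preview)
Your proposal is correct and follows essentially the same approach as the paper: part (i) via the Mehler/Wick polynomial representation of the covariance in terms of $\overline{r}_n,\tilde{r}_n',\tilde{r}_n''$ together with Lemma~\ref{lemma:local:single} and dominated convergence, and parts (ii) and (iii) via the Kac--Rice formula of Lemma~\ref{lemma:2.2} combined with the density asymptotics of Lemma~\ref{lemma:2.4}. One simplification worth noting for (iii): the near/far diagonal split is unnecessary, since the entire square $(\theta,\theta+T_0/n)^2$ already satisfies $|x-y|\le T_0/n$, so Lemma~\ref{lemma:2.4}(ii) applies globally (taking $\eta=T_0\sup_{[a,b]}\omega$) and yields $\iint|\rho_2-\rho_1\rho_1|\le Cn^2(T_0/n)^2=CT_0^2$ directly, without invoking integrability of $\Xi$.
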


\begin{proof} The proof of (i) is similar to our verification of Condition \eqref{eqn:single} in Section \ref{sect:single}, where we can use the fact that $\Var(f_q(\overline{T}_n(s) , \CT_n'(s)))$ is a polynomial of $\tilde{r}_n(.), \tilde{r}_n'(.), \tilde{r}_n''(.)$, and that these correlations converge to their corresponding parts of $\CT_{\infty,\theta}$ uniformly in $\theta$ owing to Lemma \ref{lemma:local:single}.

For (ii) we first use Lemma \ref{lemma:2.2} to obtain a formula for $\Var(N_n(I))$, and then use (a) of Lemma \ref{lemma:2.4} and Lemma \ref{lemma:local:single} to compare the intensities $\rho_2$ and $\rho_1$ with their corresponding parts in the Kac-Rice's formula of $\Var(N_{\infty,\theta} ((0,T_0)))$. We leave the details to the reader. Finally, for (iii) we just use \eqref{eqn:rho1} and (b) of  Lemma \ref{lemma:2.4}.
\end{proof}

We now conclude the section.

\begin{proof}[Proof of \eqref{eqn:hQ} (and hence of Condition \eqref{eqn:tail})] Let $\eps>0$. Let $Q$ be chosen later. By (i) of Lemma \ref{lemma:local:var} the following holds uniformly in $\theta$
\begin{align*}
\lim_{n\to \infty} \sum_{q< Q}  \Var\Big(\int_{I_{\theta }} f_q(\overline{T}_n(s) , \CT_n'(s)) v_n(s)ds\Big) &= \sum_{q < Q}  \Var\Big(\int_{(0,T_0)} f_q(\overline{T}_{\infty, \theta}(s) , \CT_{\infty,\theta}'(s)) v_\infty (s)ds\Big).
\end{align*}
It thus follows from (ii) and (iii) of Lemma \ref{lemma:local:var} that for sufficiently large $n$,  uniformly in $\theta \in [a,b],$
$$\Big| \sum_{q \ge Q}  \Var\Big(\int_{I_{\theta}} f_q(\overline{T}_n(s) , \CT_n'(s)) v_n(s)ds\Big) - \sum_{q \ge Q}  \Var\Big(\int_{(0,T_0)} f_q(\overline{T}_{\infty, \theta}(s) , \CT_{\infty,\theta}'(s)) v_\infty (s)ds\Big)\Big| \le \eps.$$
Hence by the triangle inequality, for sufficiently large $n$
$$\Big |\frac{1}{(b-a)n/T_0} \sum_i \sum_{q \ge Q} \Var \Big(\int_{I_{i}} f_q(\overline{T}_n(s) , \CT_n'(s)) v_n(s)ds\Big)$$
$$- \frac{1}{(b-a)n/T_0} \sum_i   \sum_{q \ge Q} \Var \Big(\int_{(-T_0/2,T_0/2)} f_q(\overline{T}_{\infty, \theta_i}(s) , \CT_{\infty,\theta_i}'(s)) v_\infty (s)ds\Big)\Big| \le \eps.$$
Now we consider the second sum involving the limiting process $\overline{T}_{\infty}$. As  $\sum_{q < Q} \Var (\int_{(0,T_0)} f_q(\overline{T}_{\infty, \theta}(s) , \CT_{\infty,\theta}'(s)) v_\infty (s)ds)$ and  $\Var (N_{\infty,\theta} ((0,T_0)))$ are both continuous and uniformly bounded for $\theta \in [a,b]$, the tail function $\sum_{q \ge Q} \Var (\int_{(0,T_0)} ... ds)$ is also continuous and uniformly bounded, and hence the Riemann sum converges to its integral,
   $$\lim_{n\to \infty} \frac{1}{(b-a)n/T_0} \sum_i   \sum_{q \ge Q} \Var \left(\int_{(0,T_0)} f_q(\overline{T}_{\infty, \theta_i}(s) , \CT_{\infty,\theta_i}'(s)) v_\infty (s)ds \right)$$
  $$=  \int_{a}^b \sum_{q \ge Q} \Var \left(\int_{(0,T_0)} f_q(\overline{T}_{\infty, \theta}(s) , \CT_{\infty,\theta}'(s)) v_\infty (s)ds \right) d\theta.$$
Passing back to the original sum, for sufficiently large $n$ we have
 $$\frac{1}{(b-a)n/T_0} \sum_i \sum_{q \ge Q} \Var \left(\int_{I_{i}} f_q(\overline{T}_n(s) , \CT_n'(s)) v_n(s)ds\right)$$
 $$ \le  \int_{a}^b \sum_{q \ge Q} \Var \left(\int_{(0,T_0)} f_q(\overline{T}_{\infty, \theta_i}(s) , \CT_{\infty,\theta_i}'(s)) v_\infty (s)ds\right) d\theta + 2\eps.$$
To this end,  again by  (ii) and (iii) of Lemma \ref{lemma:local:var} and by Fubini,
 $$\sum_{q} \int_{a}^b \Var \left(\int_{(0,T_0)} f_q(\overline{T}_{\infty, \theta_i}(s) , \CT_{\infty,\theta_i}'(s)) v_\infty (s)ds\right) d\theta <\infty.$$
Hence there exists $Q$ such that $\sum_{q \ge Q} \int_{a}^b \Var (\int_{(0,T_0)} f_q(\overline{T}_{\infty, \theta_i}(s) , \CT_{\infty,\theta_i}'(s)) v_\infty (s)ds) d\theta <\eps$, which in turn (again via Fubini) implies that
$$\int_{a}^b\sum_{q \ge Q}  \Var \left(\int_{(0,T_0)} f_q(\overline{T}_{\infty, \theta_i}(s) , \CT_{\infty,\theta_i}'(s)) v_\infty (s)ds\right) d\theta <\eps.$$
We have thus shown that for any given $\eps$, there exists a large $Q$ such that for all sufficiently large $n$ we have
 $$\frac{1}{(b-a)n/T_0} \sum_i \sum_{q \ge Q} \Var \left(\int_{I_{i}} f_q(\overline{T}_n(s) , \CT_n'(s)) v_n(s)ds\right) \le 3\eps.$$
The proof of  \eqref{eqn:hQ} is then complete by sending $\eps \to 0$.
 \end{proof}

\section{Proof of Theorem \ref{thm:CLT}}
 By Theorem \ref{thm:gau} and Proposition \ref{prop:cov:zero}, for each fixed $Q$ we have that $\sum_{q=2}^Q I_q^B(g_q^{n})$ converges in distribution to $\BN(0,\sum_{q=2}^Q \sigma_q^2)$. On the other hand, recall that by Theorem \ref{thm:decomposition},
$$\frac{N([na,nb]) - \E N}{\sqrt{c_{a,b}n}} = \sum_{q=2}^\infty I_q^B(g_q^{(n)})$$
and
$$\lim_{Q \to \infty }\sum_{q=2}^Q \sigma_q^2=1.$$
It thus follows that $\frac{N([na,nb]) - \E N}{\sqrt{c_{a,b}n}}$ converges to $\BN(0,1)$ in distribution, completing the proof.

\section{Proof of Theorem ~\ref{thm:decomposition}}\label{sect:decomp}

We will need the following analog of \cite[Lemma 2]{KL} on Hermite expansion of Kac-Rice formula.

\begin{lemma}\label{l.1} The following holds
	\begin{enumerate}[(i)]
		\item  Define
		$$N^\eta([na,nb]) :=\int_{na}^{nb}  \phi_\eta(\overline {T}_n(s))|\CT'_n(s)|v_n(s)ds$$
		where $\phi_\eta$ is the density of the $N(0,\eta)$ distribution. Then $N^\eta([na,nb])$ converges almost surely and in $L^2$ to $N([na,nb])$, and $\mathbb E|N^\eta([na,nb])|^2 \to \mathbb E |N([na,nb])|^2$, as $\eta\to 0.$
		\vskip .05in
		\item The random variable $N^\eta([na,nb])$ has the chaotic expansion (in $L^2$)
		$$N^\eta([na,nb])=\sum_{q=0}^\infty \sum_{\ell=0}^{\lfl q/2 \rfl} b^\eta_{q-2\ell}a_{2\ell}\int_{na}^{nb}H_{q-2\ell}(\overline{T}_n(s))H_{2\ell}(\CT'_n(s))v_n(s)ds,$$
		where $b^\eta_{k}$ are the Hermite coefficients of $\phi_{\eta}$,  $b^\eta_k = \frac{1}{\sqrt{2\pi} k!}\int_{-\infty}^\infty \phi_\eta(t)H_k(t) e^{-t^2/2}dt$.
	\end{enumerate}
\end{lemma}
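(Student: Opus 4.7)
The plan is to prove Lemma~\ref{l.1} in two parts: for~(i) I would combine a Bulinskaya-type a.s. analysis with moment convergence, and for~(ii) I would invoke the Hermite expansions of $\phi_\eta$ and $|\cdot|$ together with the independence of $\overline{T}_n(s)$ and $\CT_n'(s)$ at each fixed $s$.

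\textbf{Part (i).} Observing that $v_n(s)\CT_n'(s)=\overline{T}_n'(s)$, we can rewrite
$$N^\eta([na,nb])=\int_{na}^{nb}\phi_\eta(\overline{T}_n(s))\,|\overline{T}_n'(s)|\,ds.$$
Since $\overline{T}_n$ is almost surely a smooth function of $s$ and the joint density of $(\overline{T}_n(s),\overline{T}_n'(s))$ is uniformly bounded (thanks to the non-degeneracy $v_n\ge c>0$ in part~(i) of Lemma~\ref{lemma:bounds}), Bulinskaya's lemma implies that almost surely every zero of $\overline{T}_n$ in $[na,nb]$ is simple and isolated. On this full-measure event, localizing the integral near each simple zero $s_i$ and changing variable $u=\overline{T}_n'(s_i)(s-s_i)$ gives $N^\eta\to N$ pointwise as $\eta\to 0$; the contribution away from the zero set vanishes because $\phi_\eta(\overline{T}_n(s))\to 0$ uniformly there. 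To upgrade to $L^2$ convergence I would use the standard criterion that a.s. convergence together with $\E|N^\eta|^2\to\E|N|^2$ implies $L^2$ convergence. The two moment convergences follow from the Kac-Rice type representations $\E N^\eta=\int\E[\phi_\eta(\overline{T}_n(s))|\overline{T}_n'(s)|]\,ds$ and $\E(N^\eta)^2=\iint\E[\phi_\eta(\overline{T}_n(s))\phi_\eta(\overline{T}_n(t))|\overline{T}_n'(s)||\overline{T}_n'(t)|]\,ds\,dt$, whose integrands converge, by a direct Gaussian computation as $\eta\to 0$, to the Kac-Rice intensities $\rho_1(s)$ and $\rho_2(s,t)$ of Lemma~\ref{lemma:2.2}. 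The uniform bounds $\rho_1\le Cn$ from~\eqref{eqn:rho1} and $\rho_2\le Cn^2$ from Lemma~\ref{lemma:2.4}(ii) then justify dominated convergence.

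\textbf{Part (ii).} At each fixed $s$, the pair $(\overline{T}_n(s),\CT_n'(s))$ is jointly Gaussian with unit variances and vanishing correlation, hence consists of two independent standard normals. In $L^2(\R,\phi(x)\,dx)$ the Hermite polynomials form an orthogonal basis with $\|H_k\|^2=k!$, and for $\eta<1$ (so that $\phi_\eta\in L^2(\phi\,dx)$) the expansion $\phi_\eta(x)=\sum_{k\ge 0} b_k^\eta H_k(x)$ holds with the stated coefficients. The classical expansion $|y|=\sum_{\ell\ge 0} a_{2\ell}H_{2\ell}(y)$ (odd coefficients vanishing by parity) provides the $a_{2\ell}$. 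Multiplying the two expansions and using independence, one obtains pointwise in $s$ and in $L^2(\BP)$,
$$\phi_\eta(\overline{T}_n(s))\,|\CT_n'(s)|=\sum_{q\ge 0}\sum_{\ell=0}^{\lfl q/2\rfl} b_{q-2\ell}^\eta\,a_{2\ell}\,H_{q-2\ell}(\overline{T}_n(s))\,H_{2\ell}(\CT_n'(s)),$$
after grouping terms by the total Hermite degree $q=(q-2\ell)+2\ell$. A stochastic Fubini argument, justified by the orthogonality of distinct Wiener chaos components and the uniform correlation bounds from Lemma~\ref{lemma:bounds}, then permits interchanging the sum with the integral against $v_n(s)\,ds$, yielding the chaotic expansion of $N^\eta([na,nb])$ asserted in the statement.

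\textbf{Main obstacle.} I expect the moment convergence $\E(N^\eta)^2\to\E N^2$ in~(i) to be the most delicate step. The two-point density $\rho_2$ carries a $1/\sqrt{\Delta}$ singularity on the diagonal, and it is not immediate that the $\phi_\eta$-regularized integrand admits a $\eta$-uniform dominating function. The rescue is precisely the estimate $|\rho_2(x,y)-\rho_1(x)\rho_1(y)|\le Cn^2$ in Lemma~\ref{lemma:2.4}(ii), which yields an integrable envelope absorbing the $\eta\to 0$ limit. A secondary, smaller technical point is verifying that the pointwise $L^2$-valued Hermite expansion in~(ii) survives integration against $v_n(s)\,ds$; the required $L^2$-Cauchy property of the partial sums after integration follows routinely from orthogonality across Wiener chaoses together with the bounds on $\overline{r}_n$, $\tilde{r}'_n$, $\tilde{r}''_n$ in Lemma~\ref{lemma:bounds}.
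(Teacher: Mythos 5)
Your part~(ii) matches the paper's argument in substance: Hermite expansion of $\phi_\eta$ and $|\cdot|$, independence of $\overline{T}_n(s)$ and $\CT_n'(s)$ at each $s$, $L^2$-Cauchy estimate for the partial sums, and regrouping by total Hermite degree. Your almost sure convergence argument in part~(i) is also sound, though phrased differently: the paper writes $N^\eta=\int N(u)\phi_\eta(u)\,du$ via the area formula and uses continuity of $u\mapsto N(u)$ at $u=0$, which again rests on the no-double-root lemma you invoke through Bulinskaya.

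The genuine gap is in your moment convergence for $\E(N^\eta)^2$. You propose to pass to the limit in
$$\E(N^\eta)^2=\iint_{[na,nb]^2}\E\bigl[\phi_\eta(\overline{T}_n(s))\phi_\eta(\overline{T}_n(t))\,|\overline{T}_n'(s)|\,|\overline{T}_n'(t)|\bigr]\,ds\,dt$$
by pointwise convergence of the integrand to $\rho_2(s,t)$ and a dominated-convergence envelope coming from Lemma~\ref{lemma:2.4}(ii). This cannot work: for $s\ne t$ the integrand does converge to $\rho_2(s,t)$, but on the diagonal it blows up like $\eta^{-1}$ (since $\phi_\eta^2\sim\eta^{-1}\phi_{\eta/\sqrt2}$), and the $O(\eta)$-neighborhood of the diagonal carries an $\eta$-independent mass. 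Consequently the actual limit is $\E N^2=\iint\rho_2+\int\rho_1$, whereas your argument would produce $\iint\rho_2=\E N(N-1)$; you would lose exactly the $\int\rho_1=\E N$ diagonal contribution, and there is no $\eta$-uniform dominating function to justify DCT near $s=t$. Lemma~\ref{lemma:2.4}(ii) controls the second-order intensity $\rho_2$, not the $\eta$-regularized integrand, and does not repair this.

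The paper circumvents this entirely. After the area-formula rewriting $N^\eta=\int N(u)\phi_\eta(u)\,du$, it observes $\sup_u N(u)\le 1+N'$ pathwise, where $N'$ is the number of zeros of $\overline{T}_n'$, and establishes separately (via part~(v) of Lemma~\ref{lemma:bounds} and a Dudley-type supremum bound) that $\E(N')^2<\infty$. This gives the $\eta$-uniform, $u$-uniform dominating bound $\E N(u)^2\le\E(1+N')^2$, and then $\E(N^\eta)^2\le\int\E N(u)^2\phi_\eta(u)\,du\to\E N(0)^2=\E N^2$ by continuity of $u\mapsto\E N(u)^2$ at $u=0$, not by Kac--Rice DCT. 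You would need to import this squared-integrability of $N'$ (or an equivalent diagonal decoupling) to make your approach rigorous.
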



Assuming this lemma for the moment, we will conclude our main result.
\proof[Proof of Theorem~\ref{thm:decomposition}] Since $H_k$ is odd if $k$ is odd, it follows immediately that $b^\eta_k=0=b_k$ for $k$ odd. For $k$ even, we have
$$\lim_{\eta\to 0}b_k^\eta= \frac{H_k(0)}{\sqrt{2\pi}k!} =b_k.$$

Given any $Q$, using Fatou's lemma and Lemma \ref{l.1} we have
\begin{eqnarray*}
 && \sum_{q=0}^Q \mathbb E\Big(\Big[\sum_{\ell=0}^{\lfl q/2 \rfl} b_{q-2\ell}a_{2\ell}\int_{na}^{nb}H_{q-2\ell}(\overline{T}_n(s))H_{2\ell}(\CT'_n(s))v_n(s)ds\Big]^2\Big) \\
 &\le& \liminf_{\eta\to 0}\sum_{q=0}^Q \mathbb E\Big(\Big[\sum_{\ell=0}^{\lfl q/2 \rfl} b^\eta_{q-2\ell}a_{2\ell}\int_{na}^{nb}H_{q-2\ell}(\overline{T}_n(s))H_{2\ell}(\CT'_n(s))v_n(s)ds\Big]^2\Big) \\
 &\le& \liminf_{\eta\to 0}\sum_{q=0}^\infty \mathbb E\Big(\Big[\sum_{\ell=0}^{\lfl q/2 \rfl} b^\eta_{q-2\ell}a_{2\ell}\int_{na}^{nb}H_{q-2\ell}(\overline{T}_n(s))H_{2\ell}(\CT'_n(s))v_n(s)ds\Big]^2\Big) \\
 &=& \liminf_{\eta\to 0}\mathbb E|N^\eta([na,nb])|^2 = \mathbb E |N([na,nb])|^2 <\infty,
\end{eqnarray*}
where we used the orthogonality of the chaos (see the discussion in the Chaining section after the statement of Theorem~\ref{thm:decomposition}), and in the last inequality we used $N([na,nb])\le n$. 

It follows that the following expression converges to a limit $L$ in $L^2$, as $\eta\to 0$:
$$\sum_{q=0}^\infty \sum_{\ell=0}^{\lfl q/2 \rfl} b_{q-2\ell}a_{2\ell}\int_{na}^{nb}H_{q-2\ell}(\overline{T}_n(s))H_{2\ell}(\CT'_n(s))v_n(s)ds.$$
Our first goal is to show that, almost surely, this limit $L$ is exactly $N([na,nb])$. By the triangle inequality,
$$\|N([na,nb])-L\|_2 \le \|N(na,nb)-N^\eta([na,nb])\|_2 + \|N^\eta([na,nb])-L\|_2,$$
and by Lemma~\ref{l.1} part (i) the first term on the right hand side converges to $0$ as $\eta\to 0$. On the other hand, letting
$$J_q:=\int_{na}^{nb}H_{q-2\ell}(\overline{T}_n(s))H_{2\ell}(\CT'_n(s))v_n(s)ds,$$
using the facts that $\overline{T}_n(s)$ and $\CT'_n(s)$ are standard Gaussian, and that $\int_{na}^{nb}v_n(s)ds=N([na,nb])<\infty$, it can be seen that $J_q\in L^2$. We then  use part (ii) of Lemma~\ref{l.1} to write
$$\|N^\eta([na,nb])-L\|_2 =\left\|\sum_{q=0}^\infty \sum_{\ell=0}^{\lfl q/2\rfl}(b^\eta_{q-2\ell}-b_{q-2\ell})a_{2\ell}J_q \right\|_2$$
$$\le \left\|\sum_{q=0}^Q \sum_{\ell=0}^{\lfl q/2\rfl}(b^\eta_{q-2\ell}-b_{q-2\ell})a_{2\ell}J_q\right\|_2 + \left\|\sum_{q=Q+1}^\infty \sum_{\ell=0}^{\lfl q/2\rfl}b^\eta_{q-2\ell}a_{2\ell}J_q\right\|_2 + \left\|\sum_{q=Q+1}^\infty \sum_{\ell=0}^{\lfl q/2\rfl}b_{q-2\ell} a_{2\ell}J_q\right\|_2$$
for any fixed $Q$. Thus if $Q$ is fixed then for $\eta\to 0$ the first term converges to $0$ (as $b^\eta_k\to b_k$ for fixed $k$). If $\mathcal P_{>Q}$ denotes the projection onto the (random) subspace $\oplus_{j>Q} \CH_{j}$, then the second term is exactly $\|\mathcal P_{>Q}(N^\eta([na,nb]))\|$, and thanks to boundedness of $\mathcal P_{>Q}$ and thanks to Lemma~\ref{l.1} this term converges to $\|\mathcal P_{>Q}(N([na,nb]))\|_2$. The third term is exactly $\|\mathcal P_{>Q}(L)\|_2$. Consequently
$$\limsup_{\eta\to 0}\|N^\eta([na,nb])-L\|_2 \le\|\mathcal P_{>Q}(N([na,nb]))\|_2 + \|\mathcal P_{>Q}L\|_2.$$
Since both $N([na,nb])$ and $L$ belong to $L^2$, by taking $Q$ large enough, we obtain that
$$0\le \limsup_{\eta\to 0}\|N^\eta([na,nb])-L\|_2 \le \epsilon$$
for any $\epsilon>0$,  thus the left hand side limit must be zero. Collecting estimates, we arrive at the conclusion that $\|N([na,nb])-L\|_2=0$, and hence $L=N([na,nb])$.

Thus, we just showed that the following chaotic expansion holds (in $L^2$)
\begin{eqnarray}
 N([na,nb]) &=&\sum_{q=0}^\infty \sum_{\ell=0}^{\lfl q/2 \rfl} b_{q-2\ell}a_{2\ell}\int_{na}^{nb}H_{q-2\ell}(\overline{T}_n(s))H_{2\ell}(\CT'_n(s))v_n(s)ds\nonumber\\
 &\equiv& \sum_{q=0}^\infty \sqrt{c_{a,b} n} \ I_q^{\overline{T}_n}([na,nb]).\nonumber
\end{eqnarray}

Now, observe that $I_1^{\overline{T}_n}=0$ (as $b_1=0$). On the other hand, as $b_0=c_0=H_0=1$, we have
$$I_0^{\overline{T}_n}=\frac{1}{\sqrt{c_{a,b} n}} \int_{na}^{nb}v_n(s)ds=\frac{1}{\sqrt{c_{a,b} n}}\mathbb E N([na,nb]),$$
where in the last equality we used the Kac-Rice formula (in Edelman-Kostlan's formulation).

Consequently the following expansion holds in $L^2$
$$\frac{N([na,nb])-\mathbb EN([na,nb])}{\sqrt {c_{a,b} n}} = \sum_{q=2}^\infty I_q^{\overline{T}_n}([na,nb]),$$
as desired.
\endproof

\proof[Proof of Lemma~\ref{l.1}]
(i)  We start with almost surely convergence. Let $N([na,nb])(u)$ denote the number of crossings of $\overline{T}_n$ of level $u$.

We first use the area formula \cite{Federer1966} to write
$$N^\eta([na,nb])=\int_{-\infty}^\infty N([na,nb])(u)\phi_\eta(u)du.$$

We now show that $N([na,nb])(u)$ is locally continuous at $u=0$. Since $N([na,nb])(0)$ is  bounded above by $n$, it suffices to show that
\begin{lemma}\label{l.dr}Almost surely $\overline{T}_n$ has no double root in $[na,nb]$.
\end{lemma}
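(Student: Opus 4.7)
The plan is to apply a Bulinskaya-type argument: show that for every $t \in [na,nb]$ the Gaussian density of the pair $(\overline{T}_n(t), \overline{T}_n'(t))$ at the origin is bounded uniformly in $t$, and then combine this with a discretization, control of $\overline{T}_n''$, and the union bound to rule out a simultaneous zero anywhere on $[na,nb]$.

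First I would observe that a double root of $\overline{T}_n$ at $t$ is equivalent to the simultaneous vanishing $\overline{T}_n(t) = 0 = \overline{T}_n'(t)$ (the rescaling factor $V_n(t)$ is smooth and bounded away from zero by Lemma~\ref{lemma:3.3}(c)). Since $\overline{T}_n$ has unit variance, differentiating the identity $\mathrm{Var}(\overline{T}_n(t)) = 1$ yields $\E[\overline{T}_n(t) \overline{T}_n'(t)] = 0$, so the jointly Gaussian vector $(\overline{T}_n(t), \overline{T}_n'(t))$ has diagonal covariance $\mathrm{diag}(1, v_n(t)^2)$. Lemma~\ref{lemma:bounds}(i) then gives $v_n(t) \geq c > 0$ uniformly, hence the joint density at $(0,0)$ is bounded above by $(2\pi c)^{-1}$ uniformly in $t \in [na,nb]$.

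Next I would introduce a cutoff for the second derivative via $E_K := \{\sup_{t \in [na,nb]} |\overline{T}_n''(t)| \leq K\}$. Since $\overline{T}_n''$ is almost surely continuous on the compact interval, $\P(E_K) \to 1$ as $K \to \infty$; a quantitative bound can be obtained from Lemma~\ref{lemma:bounds}(v) with $k=2$ combined with a standard chaining estimate if desired. Cover $[na,nb]$ by $O(n/\epsilon)$ intervals $I_k$ of length $\epsilon$, with midpoints $\tau_k$. On the event $E_K$, a double root at some $t^* \in I_k$ forces, by a second-order Taylor expansion centered at $t^*$,
$$|\overline{T}_n(\tau_k)| \leq K\epsilon^2/2, \qquad |\overline{T}_n'(\tau_k)| \leq K\epsilon/2.$$
By the uniform density bound each such event has probability $O(K^2 \epsilon^3)$, and summing over the $O(n/\epsilon)$ intervals yields
$$\P(\overline{T}_n \text{ has a double root in } [na,nb]) \leq \P(E_K^c) + O\!\left(n K^2 \epsilon^2\right).$$
Sending $\epsilon \to 0$ and then $K \to \infty$ concludes the argument.

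The main potential obstacle is the uniform non-degeneracy of the covariance matrix of $(\overline{T}_n(t), \overline{T}_n'(t))$, but this works out cleanly because the unit-variance normalization of $\overline{T}_n$ automatically kills the off-diagonal term, reducing matters to the positive lower bound on $v_n(t)$ already furnished by Lemma~\ref{lemma:bounds}(i). The remaining ingredients (Taylor expansion, discretization, union bound) are routine.
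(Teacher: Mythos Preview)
Your proof is correct and follows essentially the same Bulinskaya-type discretization argument as the paper: cover $[na,nb]$ by short intervals, use Taylor expansion to show a double root forces small values at the interval centers, then apply a small-ball estimate and union bound. The only minor difference is that you use the two-dimensional density bound for $(\overline{T}_n(\tau_k),\overline{T}_n'(\tau_k))$ (exploiting the lower bound on $v_n$), whereas the paper gets by with the one-dimensional small-ball inequality for $\widetilde{T}_n(c)$ alone, obtained simply by conditioning on $\xi_1,\dots,\xi_n$; your explicit cutoff $E_K$ for the second derivative is also a cleaner treatment of a point the paper leaves implicit.
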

We will defer the proof of this result to the end of the section. It follows that given any $\epsilon>0$ there is some $\delta>0$ such that
$$|N([na,nb])(u)-N([na,nb])(0)|<\epsilon$$
for $|u|<\delta$. We choose $\eta \ll \delta$ and write
\begin{eqnarray}
 &&N^\eta([na,nb]) - N([na,nb]) =\int_{|u|>\delta}^\infty N([na,nb])(u) \phi_\eta(u)du  \nonumber\\
 &&\quad  - \int_{|u|>\delta}^\infty  N([na,nb])\phi_\eta(u)du + \int_{|u|\le \delta}^\infty \Big(N([na,nb])(u)-N([na,nb])(0)\Big)\phi_\eta(u)du.\nonumber
\end{eqnarray}

Note that $N([na,nb])(0)\le n$ and $1_{|u|>\delta}\phi_\eta(u)$ decreases to $0$ as $\eta\to 0$, thus by dominated convergence the second term converges to $0$ as $\eta\to 0$.

Similarly $N([na,nb])(u) \phi_\eta(u)1_{|u|>\delta}$ decreases to $0$ as $\eta\to 0$, and by dominated convergence  the first term also converges to $0$ as $\eta\to 0$.

The last term is bounded above by
$$\int_{|u|\le \delta} \epsilon \phi_\eta(u)du < \epsilon \int_{-\infty}^\infty \phi_\eta(u)du=\epsilon.$$

Consequently,
$$\limsup_{\eta\to 0} |N^\eta([na,nb]) - N([na,nb])|\le \epsilon$$
for any $\epsilon>0$. This proves the almost sure convergence of $N^\eta([na,nb])$ to $N([na,nb])$ when $\eta\to 0$.

We now prove that $\lim_{\eta \to 0} \mathbb E |N^\eta([na,nb])|^2 = \mathbb E |N([na,nb])|^2$.

By Fatou's lemma, we first have
$$\mathbb E |N([na,nb])|^2 \le  \liminf_{\eta\to 0} \mathbb E |N^\eta([na,nb])|^2.$$

Now, let $N'([na,nb])$ denote the number of zeros of $\CT'_n$ (equivalently of $\overline{T}'_n$) in the same interval. We first note the following.
\begin{lemma}\label{lemma:N_X}
We have  $N'([na,nb])$ is square integrable.
\end{lemma}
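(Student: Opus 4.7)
The plan is to apply Kac--Rice's second factorial moment formula to the centered real-analytic Gaussian process
\[
Y(t) := \overline{T}_n'(t) = \sum_{j=0}^n \xi_j q_j'(t), \qquad t \in [na,nb],
\]
and bound $\E[N'(N'-1)]$; since $\E[(N')^2] = \E[N'(N'-1)] + \E N'$, it suffices to show both terms are finite. First I would record the basic estimates I need. By Lemma~\ref{lemma:3.3}(c), $r_n(t,t)$ is uniformly bounded below on the interval, so each $q_j$ is real-analytic and hence so is $Y$. By Lemma~\ref{lemma:bounds}(i), $\Var Y(t) = v_n(t)^2 \in [c^2,C^2]$ uniformly in $t$, and by Lemma~\ref{lemma:bounds}(v) with $k=2$, $\Var Y'(t) = \sum_j (q_j''(t))^2 \le C_2$ uniformly. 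These already imply $\E N' < \infty$ via the first-order Kac--Rice formula, using $\E[|Y'(t)| \mid Y(t)=0] \le \sqrt{\Var Y'(t)} \le \sqrt{C_2}$ together with the uniform upper bound $1/(\sqrt{2\pi}\, v_n(t)) \le 1/(\sqrt{2\pi}\, c)$ on the Gaussian density at $0$.

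Next I would verify that the $2\times 2$ covariance matrix $\Sigma(s,t)$ of $(Y(s), Y(t))$ is nonsingular whenever $s \ne t$ in $[na,nb]$. This reduces to checking that $|\tilde{r}_n''(s,t)| < 1$ off the diagonal, equivalently that the two vectors $(q_j'(s)/v_n(s))_{j=0}^n$ and $(q_j'(t)/v_n(t))_{j=0}^n$ are linearly independent in $\R^{n+1}$; I would argue this by noting that a linear dependence would force the real-analytic Gaussian random function $\overline{T}_n'$ to satisfy $\overline{T}_n'(s) = \lambda\, \overline{T}_n'(t)$ almost surely for some scalar $\lambda$, which, together with the nontriviality of $\overline{T}_n'$ and analyticity, can only hold on a discrete set of pairs $(s,t)$ of measure zero. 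Granting this, the second-order Kac--Rice formula gives
\[
\E[N'(N'-1)] = \iint_{[na,nb]^2} p_{(Y(s),Y(t))}(0,0)\; \E\bigl[\,|Y'(s)\, Y'(t)| \bigm| Y(s)=Y(t)=0\,\bigr]\, ds\,dt.
\]

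What I expect to be the main obstacle is showing that this double integral is finite near the diagonal. Taylor expanding $Y(t) = Y(s) + (t-s) Y'(s) + O((t-s)^2)$ yields
\[
\det\Sigma(s,t) \sim v_n(s)^2\, \Var Y'(s)\, (t-s)^2 \quad\text{as } t\to s,
\]
so $p_{(Y(s),Y(t))}(0,0) = O(|t-s|^{-1})$. Conditioning on $Y(s)=Y(t)=0$ forces, via the same Taylor expansion, the pair $(Y'(s), Y'(t))$ to have conditional mean and standard deviation of order $O(|t-s|)$ --- intuitively, the two constraints pin down both $Y(s)$ and the divided difference $(Y(t)-Y(s))/(t-s) = Y'(s) + O(t-s)$. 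Hence the Gaussian conditional expectation of $|Y'(s)\, Y'(t)|$ is $O((t-s)^2)$, giving an integrand of order $O(|t-s|)$, which is locally integrable on $[na,nb]^2$. Away from the diagonal the integrand is bounded by continuity and compactness, so the double integral is finite. Combining yields $\E[(N')^2] < \infty$, as required.
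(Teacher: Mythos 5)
Your approach is genuinely different from the paper's, which never touches the two-point Kac--Rice formula. Writing $X=\overline{T}'_n$ (your $Y$), the paper uses Lemma~\ref{lemma:bounds}(v) at a high order ($k=8$) to show $\E\bigl(X^{(6)}(s)-X^{(6)}(t)\bigr)^2 \le C(s-t)^2$, applies Dudley's metric-entropy theorem to conclude $\E\sup_{[na,nb]}|X^{(6)}|<\infty$, and then invokes \cite[Theorem 3.6]{AW} (with $m=2$, $p=5$): a general moment bound that converts a supremum estimate on a high-order derivative, together with boundedness of the one-point density of $X(t)$ (supplied by Lemma~\ref{lemma:bounds}(i)), directly into $\E N_X^2<\infty$. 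That route needs no two-point analysis at all.

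Your plan has two places that need real work and one that could actually fail. The near-diagonal claim --- that $(Y'(s),Y'(t))$ has conditional standard deviation $O(|t-s|)$ given $Y(s)=Y(t)=0$ --- is correct but rests on a genuine cancellation in the Schur complement of the $4\times 4$ covariance matrix; the clean way to see it is
$\Var\bigl(Y'(s)\mid Y(s),Y(t)\bigr) \le \Var\bigl(Y'(s)-\tfrac{Y(t)-Y(s)}{t-s}\bigr)=O((t-s)^2)$,
which already requires a uniform bound on $\E|Y''|^2$, i.e.\ Lemma~\ref{lemma:bounds}(v) with $k=3$, not $k=2$. A one-line Taylor heuristic does not substitute for this bookkeeping. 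More seriously, the off-diagonal nondegeneracy argument does not close the loop. Showing that pairs $(s,t)$ with $s\ne t$ and $|\tilde{r}''_n(s,t)|=1$ form a null set does not control the Kac--Rice integral: near such a pair the joint density of $(Y(s),Y(t))$ at the origin behaves like the inverse square root of the covariance determinant and can blow up non-integrably. You would have to either rule out degenerate off-diagonal pairs altogether --- a nontrivial algebraic assertion about the normalized derivatives $q_j'$ --- or quantify the order of vanishing of that determinant near them; you do neither. Avoiding exactly this kind of delicate two-point degeneracy question is what the paper buys by citing \cite[Theorem 3.6]{AW}.
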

We will prove this result later. To proceed, it is clear that   $\sup_u N([na, nb])(u) \le 1+ N'([na, nb])$. As $N([na,nb])(u) \to N([na,nb])$ when $u\to 0$, it follows from an application of the Dominated Convergence Theorem that
$$\lim_{u\to 0}\mathbb E|N([na,nb])(u)|^2=\mathbb E |N([na,nb])|^2.$$

The above argument also reveals that $\mathbb E|N([na,nb])(u)|^2$ is uniformly bounded by $\mathbb E|1+N'([na,nb])|^2<\infty$.

Now, using the area formula we have
$$\mathbb E |N^\eta([na,nb])|^2 = \mathbb E \left|\int_{-\infty}^\infty N([na,nb])(u)\phi_\eta(u)du \right|^2 \le \mathbb E \int_{-\infty}^\infty |N([na,nb])(u)|^2\phi_\eta(u)du $$
$$= \int_{-\infty}^\infty \mathbb E |N([na,nb])(u)|^2\phi_\eta(u)du < \infty.$$
Since $\mathbb E |N([na,nb])(u)|^2$ is continuous at $u=0$ and uniformly bounded, and since $\phi_\eta\to \delta_0$ in distribution,
$$\lim_{\eta\to 0} \int_{-\infty}^\infty \mathbb E |N([na,nb])(u)|^2\phi_\eta(u)du=\mathbb E |N([na,nb])(0)|^2=\mathbb E |N([na,nb])|^2.$$
We obtain
$$\limsup_{\eta\to 0}\mathbb E |N^\eta([na,nb])|^2 \le \mathbb E |N([na,nb])|^2.$$
Collecting estimates, we obtain the desired $L^2$ equality
$$\lim_{\eta\to 0}\mathbb E |N^\eta([na,nb])|^2=\mathbb E |N([na,nb])|^2.$$
From this and the almost sure convergence, it follows via an application of Fatou's lemma that
$$2\mathbb E |N([na,nb])|^2 \le \liminf_{\eta\to 0} \left(\mathbb E |N([na,nb])|^2 +|N^\eta([na,nb])|^2 - |N([na,nb])-N^\eta([na,nb])|^2 \right),$$
which equals
$$ 2\mathbb E |N([na,nb])|^2 - \limsup_{\eta\to 0} \mathbb E |N([na,nb])-N^\eta([na,nb])|^2.$$
It follows that
$$\lim_{\eta\to 0}\mathbb E |N([na,nb])-N^\eta([na,nb])|^2 = 0,$$
i.e., $N^\eta([na,nb]) \to N([na,nb])$ in $L^2$.

(ii) By direct computation (and definition of $b^\eta_k$) we have the Hermite expansions  (see also \cite{ADL})
$$|\xi|=\sum_{k=0}^\infty a_k H_k(\xi),$$
$$\phi_\eta(\xi)=\sum_{k=0}^\infty b_k^\eta H_k(\xi).$$

Denote $X_s=\overline{T}_n(s)$ and $Y_s=\CT'_n(s)$, both are mean zero Gaussian processes with variance one. Consider
$$\xi^L(s):=\sum_{\ell=0}^L a_{2\ell} H_{2\ell}(Y_s),$$
$$\gamma^Q(s):=\sum_{k=0}^Q b_k^\eta H_k(X_s),$$
$$N^{Q,L}:=\int_{na}^{nb}\gamma^Q(s)\xi^L(s)v_n(s)ds.$$
Note that $\xi^L(s)$ converges in $L^2$ to $|Y_s|$ and $\gamma^Q$ converges in $L^2$ to $\phi_\eta(X_s)$. By convexity and the triangle inequality, for any $Q,L,Q',L'$
$$|N^{Q',L'}-N^{Q,L}|  \le \int_{na}^{nb}v_n(s)|(\gamma^{Q'}(s)-\gamma^Q(s))\xi^{L'}(s)|ds+\int_{na}^{nb}v_n(s)|\gamma^Q(s)(\xi^{L'}(s)-\xi^L(s))|ds.$$
So,
$$(\mathbb E|N^{Q',L'}-N^{Q,L}|^2)^{1/2}  \le \int_{na}^{nb}v_n(s)\big(\mathbb E|\gamma^{Q'}(s)-\gamma^Q(s)|^2 |\xi^{L'}(s)|^2\big)^{1/2}ds$$
$$+\int_{na}^{nb}v_n(s)\big(\mathbb E |\gamma^Q(s)|^2(\xi^{L'}(s)-\xi^L(s))^2\big)^{1/2}ds.$$
Using independence of $X_s$ and $Y_s$, the last right hand side of the above equation is the same as
$$\int_{na}^{nb}v_n(s)\big(\mathbb E|\gamma^{Q'}(s)-\gamma^Q(s)|^2\big)^{1/2} \big(\mathbb E |\xi^{L'}(s)|^2\big)^{1/2}ds +\int_{na}^{nb}v_n(s)\big(\mathbb E |\gamma^Q(s)|^2\big)^{1/2}\big(\mathbb E(\xi^{L'}(s)-\xi^L(s))^2\big)^{1/2}ds$$
$$\le \int_{na}^{nb}v_n(s)\big(\mathbb E|\gamma^{Q'}(s)-\gamma^Q(s)|^2\big)^{1/2} (\mathbb E |X_s|^2)^{1/2}ds +\int_{na}^{nb}v_n(s)\big(\mathbb E |Y_s|^2\big)^{1/2}\big(\mathbb E(\xi^{L'}(s)-\xi^L(s))^2\big)^{1/2}ds.$$
Note that $\mathbb E |X_s|^2=\mathbb E|Y_s|^2=1$. Using orthogonality,
$$\mathbb E|\gamma^{Q'} - \gamma^Q|^2 = \sum_{k \text{ between Q,Q'}}|b_k^\eta|^2k! \to 0 \mbox{ as } \min(Q,Q')\to \infty$$
 because $\sum_{k\ge 0}|b_k^\eta|^2 k!=\mathbb E|X_s|^2=1$.


 Similarly, $\mathbb E|\xi^{L'} - \xi^L|^2\to 0$ as $ \min(L,L')\to \infty$.

 Since $\int_{na}^{nb}v_n(s)ds=\mathbb E N([na,nb])<\infty$, by Dominated Convergence Theorem we obtain
\begin{equation}\label{eqn:QLQ'L'}
\lim_{\min(Q,L,Q',L')\to \infty} (\mathbb E|N^{Q',L'}-N^{Q,L}|^2)^{1/2} = 0.
\end{equation}
Note that the above argument works even if $Q'=L'=\infty$, thus we actually proved that
\begin{equation}\label{eqn:QLinfty}
\lim_{\min(Q,L)\to \infty} \left(\mathbb E|N^{Q,L} - \int_{na}^{nb}\phi_\eta(X_s)|Y_s|v_n(s)ds|^2\right)^{1/2} = 0.
\end{equation}
To proceed, we observe that for any $(k,\ell)\ne (k',\ell')$ and any $s$ the following holds:
\begin{equation}\label{eqn:orth}
\mathbb E \Big(H_k(X_s)H_{\ell}(Y_s)H_{k'}(X_s)H_{\ell'}(Y_s)\Big)=0.
\end{equation}
(Indeed, without loss of generality assume $k\ne k'$. In this case, by the independence of $X_s$ and $Y_s$, we notice that $\mathbb E(H_k(X_s)H_{k'}(X_s))=0$ as $X_s$ is standard Gaussian.)

Thus by \eqref{eqn:orth}
$$\Big(\mathbb E\Big|\sum_{\ell+k \le Q} b_k^\eta a_{\ell}\int_{na}^{nb}H_{k}(X_s)H_{\ell}(Y_s)v_n(s)ds - N^{Q,Q}\Big|^2\Big)^{1/2}$$
$$=\Big(\mathbb E \Big|\sum_{q=Q+1}^{2Q} \sum_{\ell=q-Q}^{Q} b_{q-\ell}^\eta a_{\ell}\int_{na}^{nb}H_{k}(X_s)H_{\ell}(Y_s)v_n(s)ds\Big|^2\Big)^{1/2}$$
$$\le \int_{na}^{nb}v_n(s) \Big(\mathbb E \Big|\sum_{q=Q+1}^{2Q} \sum_{\ell=q-Q}^{Q} b_{q-\ell}^\eta a_{\ell} H_{k}(X_s)H_{\ell}(Y_s) \Big|^2\Big)^{1/2}ds$$
$$=\int_{na}^{nb}v_n(s)\Big( \sum_{q=Q+1}^{2Q} \sum_{\ell=q-Q}^{Q}\mathbb E \Big| b_{q-\ell}^\eta a_{\ell} H_{k}(X_s)H_{\ell}(Y_s) \Big|^2\Big)^{1/2}ds$$
$$\le \int_{na}^{nb}v_n(s) \Big(\mathbb E \Big|\xi^Q(s) \gamma^Q(s)-\xi^{[Q/3]}(s)\gamma^{[Q/3]}(s)\Big|^2\Big)^{1/2}ds.$$
Arguing as in \eqref{eqn:QLQ'L'}, the last display converges to $0$ as $Q\to \infty$. Henceforth, by \eqref{eqn:QLinfty} and by the triangle inequality, $\sum_{\ell+k \le Q} b_k^\eta a_{\ell}\int_{na}^{nb}H_{k}(X_s)H_{\ell}(Y_s)v_n(s)ds$ converges to $N^\eta([na,nb])$ in $L^2$ as $Q \to \infty$. Note that $a_k=0$ if $k$ is odd, thus this  completes the proof of (ii).
\endproof

It remains to justify the supporting lemmas.

\begin{proof}[Proof of Lemma \ref{l.dr}]
We follow the argument in   \cite{DHNgV}. Thanks to the lower bound for $V_n$ that follows from Lemma~\ref{lemma:3.3}, it suffices to show that the polynomial (of bounded degree) $\widetilde T_n$ has no double root in any given compact interval. By conditioning on $\xi_j,j\ge 1$, it is not hard to see that for any $t$ there is a constant $C$ that may depend on $t,n$ such that $\sup_{\gamma} P(|\widetilde T_n(t)-\gamma| <\epsilon)\le C\epsilon$ for any $\epsilon>0$.

 Now assume towards a contradiction that   there is some $t\in I:=[na,nb]$ such that $\widetilde T_n(t)=\frac{d}{dt}\widetilde T_n(t)=0$. We may divide $I$ into $O(1/\epsilon)$ subintervals of length at most $\epsilon$, and one such interval will contain $t$, and if $c$ is the center of this subinterval then  using the mean value theorem we can easily show that $|\widetilde T_n (c)|\le C \epsilon^2$ (for some constant $C$ that may depend on $n$ but independent of $\epsilon$).  However, using the union bound and the above small ball inequality, it follows that the probability that there is a subinterval (among the $O(1/\epsilon)$ intervals) whose center satisfy such estimate is $O(\epsilon^2)O(1/\epsilon)=O(\epsilon)$. Consequently the probability that there is a double root of $\widetilde T_n$ in $I$ is $O(\epsilon)$ for any $\epsilon>0$, which implies the desired claim.
\end{proof}

\begin{proof}[Proof of Lemma \ref{lemma:N_X}] The proof is similar to that of Lemma 4 in \cite{ADL}. For brevity, we write $X(t):=\overline{T}'_n(t)= \sum_{j=0}^n \xi_j q_j'(t)$. Our goal is to show that, for $N_X$ being the number of zeros of the process $X$ over $[na,nb]$, we have $\E N_X^2 <\infty$. For this, we need to consider the behavior of high derivatives of $X$. More specifically we use
\begin{align*}
\E (X^{(6)}(s) - X^{(6)}(t))^2 &= \E(\sum_{j=0}^n \xi_j (q_j^{(7)}(s)- q_j^{(7)}(t)))^2 =\sum_j (q_j^{(7)}(s)-q_j^{(7)}(t))^2\\
& = \sum_{j=0}^n (\int_{s}^t q_j^{(8)}(\eta)d\eta)^2 \le (\int_s^t (\sum_{j=0}^n q_j^{(8)}(\eta)^2)^{1/2}d\eta)^2 \\
&\le (s-t)^2 \max_{\eta \in (s,t)} \sum_{j=0}^n (q_j^{(8)}(\eta))^2 \le C(s-t)^2,
\end{align*}
where we used (v) of Lemma \ref{lemma:bounds} in the last estimate and Minkowski's inequality in the second to last estimate.
As a consequence, Dudley's theorem (see for instance \cite[Theorem 2.10]{AW}) applied to the Gaussian process $(X^{(6)}(t))_{t\in [na, nb]}$ (noting that as the metric is bounded by $C|s-t|$ by above, the covering number $N_\eps$ satisfies that $N_\eps \le \min\{C'n/\eps,1\}$, where $C'$ depends on $C,a,b$) implies that
$$\E \sup_{t\in [na ,nb]} X_t^{(6)} =O\left( \int_{0}^\infty \log (N_\eps)^{1/2}d\eps\right) =O(n).$$
We also note that the density of $X(t)$ is bounded since  the variance $\sum_{j=0}^n (q_j'(t))^2$ is bounded away from zero (by (i) of Lemma \ref{lemma:bounds}). We can then apply \cite[Theorem 3.6]{AW} to the process $X(t)$ with $m=2,p=5$, and deduce that $\E N_X^2 <\infty$.
\end{proof}

{\bf Acknowledgements.} We are grateful to F. Dalmao and D. Lubinsky for their very  helpful comments and suggestions. This work was initiated under the SQuaREs 2021 program of AIM,  we thank the Institute for the generous support. Y. Do is supported in part by NSF grant DMS-1800855.  H. H. Nguyen is supported by NSF CAREER grant DMS-1752345. O. Nguyen is supported by NSF grant DMS-2125031. I. E. Pritsker is supported in part by NSA grant H98230-21-1-0008, and by the Vaughn Foundation
endowed Professorship in Number Theory.

\appendix

\end{document}